\newcommand{\cb}[1]{{\color{blue}#1}}
\newcommand{\eq}[1]{\begin{align}#1\end{align}}
\newcommand{\eqn}[1]{\begin{align*}#1\end{align*}}
\newcommand{\h}{\mathfrak{h}}
\newtheorem{corollary}{Corollary}[section]
\newtheorem{remark}{Remark}[section]
\newtheorem{theorem}{Theorem}[section]
\newtheorem{lemma}{Lemma}[section]
\newcommand{\MP}{\mathcal{P}}
\begin{document}
	\title{A hybridizable discontinuous Galerkin method for the indefinite time-harmonic Maxwell
		equations\thanks{The research of  G. Chen is supported by National Natural Science Foundation of China (NSFC) under grant no. 12171341  and 12422115, and Opening Foundation of Agile and Intelligent Computing Key Laboratory of Sichuan
			Province.
			The research of H. Wu is supported in part by the NSF of China grants 12171238, 12261160361, and 11525103.
			The research of L. Xu is supported in part by National Natural Science Foundation of China (NSFC) under grant no.12071060 and no. 62231016.}}
	
	\date{}
	\author{
		Gang Chen%
		\thanks{School of Mathematics, Sichuan University, 610064, China. (email:{cglwdm@scu.edu.cn}).}
		\and
		Haijun Wu
		\thanks{
			Department of Mathematics, Nanjing University, Nanjing, 210093,  China
			(email:{hjw@nju.edu.cn}). }	
		\and
		Liwei Xu%
		\thanks{School of Mathematics Sciences, University of Electronic Science and Technology of China, Chengdu, 611731, China.
			The third author's research is supported in part by a Key Project of the Major Research Plan of National Natural Science Foundation of China (NSFC) grant no. 91630202 and National Natural Science Foundation of China (NSFC) grant no. 11771068. (Corresponding author: Liwei Xu, email:{xul@uestc.edu.cn}).}
	}
	\maketitle
	% REQUIRED
	\begin{abstract} 
		In this paper, we aim to develop a hybridizable discontinuous Galerkin (HDG) method for the indefinite time-harmonic Maxwell equations with the perfectly conducting boundary in the three-dimensional space. First, we derive the wavenumber explicit regularity result, which plays an important role in the error analysis for the HDG method.
		Second,  we prove a discrete inf-sup condition which holds for all positive mesh size $h$, for all wavenumber $k$, and for general domain $\Omega$.
		Then, we establish the optimal order error estimates of the underlying HDG method with constant independent of the wavenumber. The theoretical results are confirmed by numerical experiments.
		
		keywords: Maxwell equations, HDG method, low regularity
	\end{abstract}

	%%%%%%%%%%%%%%%%%%%%%%%%%%%%%%%%%%%%%%%%%%%%%%%%%%%%%%%
	\section{Introduction}
	Let $\Omega$ be a bounded simply-connected Lipschitz polyhedron in $\mathbb{R}^3$ with a connected boundary $\Gamma:=\partial\Omega$. We consider the following case of the time-harmonic Maxwell equations with the perfectly conducting boundary condition in a mixed form \cite{MR1929626}:
	\par\smallskip\noindent
	Find the electric field $\bm u$ and the Lagrange multiplier $p$ such that
	\begin{subequations}\label{source}
		\begin{align}
			\bm\nabla\times \bm\nabla\times \bm{u}-{k}^2\bm u+({k}^2+1)\nabla p&=\bm{f}&\text{ in }\Omega, \label{o1} \\
			\nabla\cdot\bm{u}&=0&\text{ in }\Omega,\label{o2}  \\
			\bm{n} \times \bm{u} &=\bm{0} &\text{ on }\Gamma,\label{boundary}\\
			p&=0 &\text{ on }\Gamma.\label{o4}
		\end{align}
	\end{subequations}
	Here, $\bm{n} $ is the outward unit normal vector to the boundary $\Gamma$, $\bm{f}\in  [L^2(\Omega)]^3$ is a given external source  field, ${k}:=\omega\sqrt{\varepsilon_0\mu_0}$ is a real wavenumber, where $\omega>0$ is a given temporal frequency, and $\varepsilon_0$ and $\mu_0$ are the electric permittivity and the magnetic permeability of the free space, respectively.
	Note that in the special case here (relative electric permittivity of the medium equals one and perfect conducting boundary condition), the real and imaginary parts are decoupled, and thus we assume that $\bm u$, $p$ and $\bm f$ are real.

	The numerical solution of the indefinite time-harmonic Maxwell equations suffers from the following two challenges. First, on a non-convex domain, the solution of Maxwell equations is only in $[H^s(\Omega)]^3$ with $s\in (1/2,1)$. A direct application of continuous finite element methods will result in a discrete solution that converges to a spurious solution.
	Second, the quality of numerical solutions to the Maxwell equation depends significantly on
	the wavenumber $k$. Different methods are applied to solve the electromagnetic models, including boundary integral methods \cite{MR3207533,MR3267105,MR1822275}, boundary element methods \cite{MR744924,MR1935806},  and finite element methods.
	The finite element method was the most popular computational technique for solving the time-harmonic Maxwell equation. In particular, finite element methods using $\bm{H}(\text{curl};\Omega)$-conforming edge elements have
	been studied in vast literatures for \eqref{source} and its reduced problem where $\nabla\cdot \bm f=0$, see  \cite{MR592160,MR864305,MR2009375,MR2059447,MR2536902,MR2869030,Brenner2006DivF}. Meanwhile, preconditioners for finite element methods solving the {\em indefinite} Maxwell equations were investigated in  \cite{MR1609607,MR2902419,MR1933811,MR2310392} and the references therein. Since the late 1970s, the discontinuous Galerkin (DG) methods have become increasingly popular due to some attractive features, including preserving local conservation of physical quantities, flexible in meshing and parallel computation, ease of design, implementation, and $hp$-adaptive strategy. DG methods for solving the time-harmonic Maxwell equations with zero wavenumber were first developed in \cite{MR1972732, MR2051073}, and then interior penalty discontinuous Galerkin (IPDG) methods for the {\em indefinite} Maxwell equations were studied in \cite{MR1929626,MR2671288} as well as for curl-curl problems in \cite{Brenner2008IPCurl}. We notice that the constants in the stability results and error estimates of the IPDG methods in \cite{MR1929626} are highly dependent on the wavenumber. In \cite{MR3265182,MR2983024}, the authors proposed and analyzed DG methods for the {\em indefinite} Maxwell equations with the {\em impedance boundary condition}, and derived the wavenumber explicit convergence results.
	We would like to remark that
	there are no research on the error estimates with explicit wavenumber dependence for the {\em indefinite} Maxwell equations with the {\em perfect conducting boundary}.
	We should also mention that in \cite{MR2263045,MR2324460,MR2511729,MR2220916}, the DG methods for the
	spurious Maxwell modes were considered.

	In recent years, the hybridizable discontinuous Galerkin (HDG) method, a type of DG methods, has been successfully applied to solve various types of differential equations, see \cite{MR2629996,MR3044180,MR2833489,MR3168286,MR3342199,MR3626530,MR3813574,MR4036984,MR4081917,MR3954449,MR3874791} and references therein. The HDG method retains the advantages of standard DG methods and can significantly reduce the number of degrees of freedom, leading to a substantial reduction in the computational cost. The first work \cite{MR2822937} that applied the HDG methods to solve the {\em indefinite} time-harmonic Maxwell equations appeared in 2011. In this paper, two HDG schemes were introduced and numerical results were reported to illustrate the performance of the proposed schemes, and however the convergence analysis was not given yet.
	In addition to this pioneer work on Maxwell equations using the HDG methods, many other HDG methods have been developed and analyzed in past few years. The HDG methods for the time-harmonic Maxwell equations with the zero wavenumber have been proposed and analyzed in \cite{MR3608330,MR3771897,Chen-2018-01}, and the HDG methods for the indefinite time-harmonic Maxwell equations with the impedance boundary condition have been designed in \cite{MR3518362,MR3626528} where the error estimates with the constants being dependent explicitly on the wavenumber have been established with the help of the regularity results in \cite{MR3265182,Maxwell-IBC}.

	%	Recently, two HDG methods for the time-harmonic Maxwell equations with zero wavenumber are proposed and analyzed in \cite{MR3608330,MR3771897,Chen-2018-01}, where the  {\em a priori} and {\em a posteriori} error estimates are derived.
	%	% In the first part of our series work \cite{Chen-2018-01}, we proposed a new HDG method to solve the time-harmonic Maxwell equations with zero wavenumber, where the low regularity case and non-homogenous boundary condition are considered.
	%	The HDG methods are also studied in \cite{MR3518362,MR3626528} for the {\em indefinite} time-harmonic Maxwell equations with the {\em impedance boundary condition}. The error estimates are derived where the constants depend explicitly on the wavenumber. The convergence analysis therein is based on the regularity results of Maxwell equations developed in \cite{MR3265182,Maxwell-IBC}.
	%
	
	In this paper, we propose a new HDG method for the {\em indefinite} time-harmonic Maxwell equations \eqref{source} with the {\em perfectly conducting boundary condition}. We first derive the wave-number explicit regularity result of the Maxwell equations, i.e., there exists a regularity index $s\in (1/2,1]$ depending on $\Omega$, such that $\bm{u}\in \bm H^s({\rm curl};\Omega)$ and
	\begin{align*}
		({k}+1)\|\bm{u}\|_{s}+\|\bm\nabla\times\bm{u}\|_{s}\lesssim \mathcal M_k\|\bm{f}\|_0,
	\end{align*}
	where $\mathcal M_k$ is defined as
	\begin{align*}
		\mathcal M_k=\sup_{\lambda_i\in E_{\lambda}}\left|\frac{\lambda_i+k^2}{\lambda_i-k^2}\right|,
	\end{align*}
	and $E_{\lambda}$ is the set of all eigenvalues of the corresponding eigenvalue problems.
	%{The above regularity result is {\em not} yet available in the literature.}
	A similar regularity result has been reported in \cite{MR4423464}, where $\Omega$ is supposed to be smooth, and therefore $s=1$. Based on such regularity result, we establish the error estimates for the proposed HDG method as follows,
	\begin{align*}
		\|\bm{u}_h-\bm{u}\|_0&\lesssim  ({\mathcal M_k}  h^{s^*}+{\mathcal M}_k^2h^2) \|\bm{f}\|_0,
	\end{align*}
	given that $\Omega$ is convex and $kh^s + \mathcal  M_k k^2h^{s^*}\le C_0$, where $\bm u_h$ is the approximation of $\bm u$, $s\in (\frac 1 2 ,1 ]$ and $s^{*}\in(1,2]$ depend on $\Omega$, and $C_0$ is a constant independent of the wavenumber. To the author's knowledge, such convergence result is also the {\em first} of its kind in the numerical study HDG methods of the {\em indefinite} time-harmonic Maxwell equations.

	The rest of this paper is organized as follows. In \Cref{section2}, we present a regularity result of the {\em indefinite} time-harmonic Maxwell equations. In \Cref{S:HDG,section4}, we propose a new HDG method and establish its well-posedness. In \Cref{section5}, we develop the convergence analysis of the HDG method based on the regularity and stability results. In \Cref{section6}, numerical experiments are performed to verify the theoretical results.

	Throughout this paper, we use $C$ to denote a positive constant independent of mesh size and the wavenumber $k$, not necessarily the same at its each occurrence.
	{For convenience we use the shorthand notation $a\lesssim b$ and $a \gtrsim b$ for the inequality $ a\le Cb$ and $ b\le Ca$. $a\backsimeq b$ stands for $a\lesssim b$ and $a\gtrsim b$.}
	%
	%%%%%%%%%%%%%%%%%%%%%%%%%%%%%%%%%%%%%%%%%%%%%%%%%%%%
	\section{The wavenumber explicit regularity}
	\label{section2}
	For any bounded Lipschitz domain $\Lambda\subset \mathbb{R}^d$ $(d=2,3)$, let $H^{m}(\Lambda)$ and $H^m_0(\Lambda)$  denote the usual  $m^{th}$-order Sobolev spaces on $\Lambda$, and $\|\cdot\|_{m, \Lambda}$, $|\cdot|_{m,\Lambda}$  denote the norm and semi-norm on these spaces. We use $(\cdot,\cdot)_{m,\Lambda}$ to denote the inner product of $H^m(\Lambda)$, with $(\cdot,\cdot)_{\Lambda}:=(\cdot,\cdot)_{0,\Lambda}$.   We use $v|_{\partial\Lambda}$ to denote the trace of $v$ on $\partial\Lambda$.
	When $\Lambda=\Omega$, we abbreviate $\|\cdot\|_{m }:=\|\cdot\|_{m, \Omega}$, $|\cdot|_{m}:=|\cdot|_{m,\Omega}$, $(\cdot,\cdot):=(\cdot,\cdot)_{\Omega}$.
	In particular,  for a surface $F$ we use $\langle\cdot,\cdot\rangle_{F}$
	%and  $\langle\!\langle\cdot,\cdot\rangle\!\rangle_{E}$
	to denote the $L^2$ inner products on  $F$.
	% and  $E$, respectively. }
%when $\Lambda\in \mathbb{R}^{2}$, we use $\langle\cdot,\cdot\rangle_{\Lambda}$ to replace $(\cdot,\cdot)_{\Lambda}$;
%when $\Lambda\in \mathbb{R}^{1}$, we use $\langle\!\langle\cdot,\cdot\rangle\!\rangle_{\Lambda}$ to replace $(\cdot,\cdot)_{\Lambda}$.
The bold face fonts will be used for vector (or tensor) analogues of the Sobolev spaces along with vector-valued (or tensor-valued) functions.
We first define the function spaces as below
\begin{align*}
	\bm{H}(\text{curl};\Omega)&:=\{\bm{v}\in [L^2(\Omega)]^3: \bm\nabla\times\bm{v}\in [L^2(\Omega)]^3 \},\\
	\bm{H}^s(\text{curl};\Omega)&:=\{\bm{v}\in [H^s(\Omega)]^3: \bm\nabla\times\bm{v}\in [H^s(\Omega)]^3 \}\text{ with }s\ge 0,\\
	\bm{H}_0(\text{curl};\Omega)&:=\{\bm{v}\in \bm{H}(\text{curl};\Omega): \bm{n} \times\bm{v}|_{\Gamma}=\bm{0} \},\\
	\bm{H}(\text{div} ;\Omega)&:=\{\bm{v}\in [L^2(\Omega)]^3: \nabla\cdot\bm{v}\in L^2(\Omega) \},\\
	\bm{H}_0(\text{div} ;\Omega)&:=\{\bm{v}\in \bm{H}(\text{div} ;\Omega): \bm{n}\cdot\bm{v}|_{\Gamma}=0 \},\\
	\bm{H}(\text{div} ^0;\Omega)&:=\{\bm{v}\in \bm{H}(\text{div} ;\Omega): \nabla\cdot\bm{v}=0\},
\end{align*}
and
\begin{align*}
	\bm{X} &:=\bm{H}(\text{curl};\Omega)\cap \bm{H}(\text{div} ;\Omega),
	&\bm{X} _N:=\bm{H}_0(\text{curl};\Omega)\cap \bm{H}(\text{div} ;\Omega),\\
	\bm{X} _{N,0}&:=\bm{H}_0(\text{curl};\Omega)\cap \bm{H}(\text{div}^0 ;\Omega),
	&\bm{X} _T:=\bm{H}(\text{curl};\Omega)\cap \bm{H}_0(\text{div} ;\Omega).
\end{align*}
%We define the norms on $\bm{H}^s(\text{curl};\Omega)$ with $s\ge 0$ as
%%
%\begin{align*}
%\|\bm{v}\|_{s,\text{curl}}=\left(  \|\bm{v}\|_s^2+\|{\color{blue} \bm\nabla\times}\bm{v}\|_s^2    \right)^{1/2}.
% \end{align*}
%
%The idea of proving the wavenumber explicit regularity is basically followed the material in book \cite{Monk}. In addition to this, we use a simple arguments concern the corresponding eigenvalue problem of \eqref{or}. The result of this section we believe it is the {\em first} time in literature.
%The following Helmholtz decomposition is a useful tool for Maxwell equation's analysis.
%
\begin{lemma}[Helmholtz decomposition \cite{MR2059447}] \label{hel}  {For any $\bm{v}\in\bm L^2(\Omega)$,} there exist functions $\bm z\in \bm H^1(\Omega)$ and $\psi\in H^1_0(\Omega)$ such that
\begin{align}\label{dec}
	\bm{v}= {\bm{z}+\nabla\psi,\quad\nabla\cdot \bm z=0, }
\end{align}
and
\begin{align}\label{decc}
	\|\bm{z}\|_{0}+\|\nabla\psi\|_0\lesssim \|\bm{v}\|_0.
\end{align}
\end{lemma}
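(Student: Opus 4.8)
The plan is to use the classical two-step construction: first peel off the gradient part by solving a homogeneous Dirichlet problem, then recognize the remainder as a solenoidal field that admits a regular vector potential (the function $\bm\phi$ in \eqref{dec} being understood as a potential with $\bm z=\nabla\times\bm\phi$). I would begin by letting $\psi\in H^1_0(\Omega)$ be the unique solution of the weak Poisson problem
\[
(\nabla\psi,\nabla q)=(\bm v,\nabla q)\qquad\text{for all }q\in H^1_0(\Omega),
\]
whose well-posedness follows from the Lax--Milgram lemma: the bilinear form $(\nabla\cdot,\nabla\cdot)$ is bounded on $H^1_0(\Omega)$ and coercive there by the Poincar\'e--Friedrichs inequality, while $q\mapsto(\bm v,\nabla q)$ is a bounded linear functional. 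Choosing $q=\psi$ and applying the Cauchy--Schwarz inequality immediately gives $\|\nabla\psi\|_0\le\|\bm v\|_0$.

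Next I would set $\bm z:=\bm v-\nabla\psi$. The variational identity above states exactly that $(\bm z,\nabla q)=0$ for every $q\in H^1_0(\Omega)$, i.e.\ $\nabla\cdot\bm z=0$ in $\Omega$ in the distributional sense, so $\bm z\in\bm H(\text{div}^0;\Omega)$ and the splitting \eqref{dec} is established. Taking $q=\psi$ in the same identity also shows $(\bm z,\nabla\psi)=0$, so the decomposition is $L^2$-orthogonal and $\|\bm z\|_0^2+\|\nabla\psi\|_0^2=\|\bm v\|_0^2$, which yields \eqref{decc} (in fact with $C=1$). Finally, to exhibit $\bm\phi\in\bm H^1(\Omega)$ with $\bm z=\nabla\times\bm\phi$, I would invoke the theory of regular vector potentials on bounded simply connected Lipschitz domains with connected boundary (as in \cite{MR2059447}): every $\bm z\in\bm H(\text{div}^0;\Omega)$ can be written as $\bm z=\nabla\times\bm\phi$ with $\bm\phi\in\bm H^1(\Omega)$ and $\|\bm\phi\|_1\lesssim\|\bm z\|_0$.

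I expect this last step to be the only genuinely non-routine part. The first two steps are elementary Hilbert-space arguments; by contrast, the $\bm H^1$-regularity of the vector potential is where the topological hypotheses on $\Omega$ (simple connectedness, connected boundary) enter essentially, and it is the ingredient that must be imported from the literature rather than reconstructed from scratch. If the intended reading of the lemma is weaker — only that $\bm z$ is divergence-free, without the potential representation — then the proof terminates after the second paragraph and is entirely self-contained.
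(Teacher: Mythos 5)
Your construction is correct, and it is essentially the standard argument behind the cited result: the paper itself offers no proof of this lemma, quoting it directly from \cite{MR2059447}, so what you have written is precisely the reconstruction of the reference's two-step argument (Dirichlet projection onto gradients, then a regular vector potential for the solenoidal remainder, where the simple connectedness of $\Omega$ and connectedness of $\Gamma$ enter). You also correctly diagnose that the $\bm\phi$ in the statement is vestigial unless one reads $\bm z=\nabla\times\bm\phi$. One trivial slip: the $L^2$-orthogonality gives $\|\bm z\|_0^2+\|\nabla\psi\|_0^2=\|\bm v\|_0^2$, hence $\|\bm z\|_0+\|\nabla\psi\|_0\le\sqrt{2}\,\|\bm v\|_0$, so the constant in \eqref{decc} is $\sqrt{2}$ rather than $1$ (each summand separately is bounded by $\|\bm v\|_0$ with constant $1$).
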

We define the bilinear form as
\begin{align}\label{eq:apm}
a^{\pm}(\bm{u},\bm{v})=(\bm\nabla\times\bm{u},\bm\nabla\times\bm{v})\pm{k}^2(\bm{u},\bm{v}).
\end{align}
By testing the first equation of \eqref{source} with functions $\bm{v}\in\bm{X} _{N,0}$, it is easy to check that the solution $\bm{u}$ of \eqref{source} is also the solution of the weak problem: Find $\bm{u}\in \bm{X} _{N,0}$ such that
\begin{align}
a^{-}(\bm{u},\bm{v})=( {\bm{f}},\bm{v})\qquad\forall\bm{v}\in\bm{X} _{N,0}. \label{s-1}
\end{align}
Similarly, by testing the first equation of \eqref{source} with $\nabla q$ where $q\in H^1_0(\Omega)$, we observe that the solution $p$ of \eqref{source} is also the solution of the weak problem:
%\par\smallskip\noindent
Find $p\in H^1_0(\Omega)$ such that
\begin{align}
({k}^2+1)(\nabla p, \nabla q)=( {\bm{f}},\nabla q )\qquad\forall q\in H^1_0(\Omega). \label{s-2}
\end{align}

Now, we introduce the following auxiliary problem: Find $\widetilde{\bm{u}}\in \bm{X} _{N,0}$ such that
\begin{align}
a^{+}(\widetilde{\bm{u}},\bm{v})=( {\bm{f}},\bm{v})\qquad \forall\bm{v}\in\bm{X} _{N,0}. \label{s-1-1}
\end{align}
{Define} the solution operator $\bm{\mathcal K}_k: [L^2(\Omega)]^3\mapsto  \bm{X} _{N,0}$ as:
for any  $\bm{w}\in [L^2(\Omega)]^3$, find
$\bm{\mathcal K}_k\bm{w}\in\bm{X} _{N,0}$  such that
\begin{align}\label{s-1-2}
a^{+}(\bm{\mathcal K}_k\bm{w},\bm{v})=-2{k}^2(\bm{w},\bm{v})\qquad\forall\bm{v}\in\bm{X} _{N,0},
\end{align}
{and let} $\bm u\in \bm X_{N,0}$ be the weak solution to \eqref{source} (i.e., the solution to \eqref{s-1}), then it is obvious that
\begin{align*}
a^{+}((\bm{\mathcal I}+\bm{\mathcal K}_k)\bm{u},\bm{v})=a^{+}(\widetilde{\bm{u}},\bm{v}),
\end{align*}
which leads to the following relation:
\begin{align}
(\bm{\mathcal I}+\bm{\mathcal K}_k)\bm{u}=\widetilde{\bm{u}}. \label{op}
\end{align}

We recall the classical estimation for vector potential $\bm{v}\in \bm{X}$ in the following lemma.
\begin{lemma}[cf. {\cite[Proposition 7.4]{Maxwell-1997}}]\label{conceive} For any
$\bm{v}\in \bm{X}_{N}$ or $\bm v\in \bm X_T$, there holds:
\begin{align*}
	\|\bm{v}\|_0\lesssim \|\bm\nabla\times\bm{v}\|_0+\|\nabla\cdot\bm{v}\|_0.
\end{align*}
\end{lemma}

The well-posedness of problems \eqref{s-1-1} and  \eqref{s-1-2}  are established in the next two lemmas.
\begin{lemma}\label{lemma23} The problem \eqref{s-1-1} has a unique solution  {satisfying the following estimate}:
\begin{align}\label{stablity-tilde}
	({k}^2+1)\|\widetilde{\bm{u}}\|_0+({k}+1)\|\bm\nabla\times\widetilde{\bm{u}}\|_0\lesssim  \|\bm{f}\|_0.
\end{align}
\end{lemma}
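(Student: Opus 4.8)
The plan is to establish well-posedness of \eqref{s-1-1} via the Lax--Milgram theorem and then extract the quantitative bound by a careful choice of test functions.

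\textbf{Step 1: Coercivity on $\bm X_{N,0}$.} Observe that $a^{+}(\bm v,\bm v)=\|\nabla\times\bm v\|_0^2+k^2\|\bm v\|_0^2$, which is manifestly nonnegative, but it is not obviously coercive in the full $\bm H(\mathrm{curl};\Omega)$-norm when $k$ is small (and it controls nothing uniformly if $k=0$). The key is that on the divergence-free subspace $\bm X_{N,0}=\bm H_0(\mathrm{curl};\Omega)\cap\bm H(\mathrm{div}^0;\Omega)$ we have a Poincar\'e--Friedrichs-type inequality: applying Lemma~\ref{conceive} with $\Gamma_s=\Gamma$, $\Gamma_\nu=\emptyset$ to any $\bm v\in\bm X_{N,0}$ gives $\|\bm v\|_0\le C\|\nabla\times\bm v\|_0$ since $\nabla\cdot\bm v=0$ and $\bm n\times\bm v|_\Gamma=\bm 0$. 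Hence $a^{+}(\bm v,\bm v)\ge \|\nabla\times\bm v\|_0^2\ge \tfrac12\|\nabla\times\bm v\|_0^2+\tfrac{1}{2C^2}\|\bm v\|_0^2$, so $a^{+}$ is coercive on $\bm X_{N,0}$ with a constant independent of $k$. Boundedness of $a^{+}$ on $\bm X_{N,0}$ and of the right-hand side functional $\bm v\mapsto(\bm f,\bm v)$ is immediate, so Lax--Milgram yields existence and uniqueness of $\widetilde{\bm u}\in\bm X_{N,0}$.

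\textbf{Step 2: The quantitative estimate.} Take $\bm v=\widetilde{\bm u}$ in \eqref{s-1-1}:
\begin{align*}
\|\nabla\times\widetilde{\bm u}\|_0^2+k^2\|\widetilde{\bm u}\|_0^2=(\bm f,\widetilde{\bm u})\le\|\bm f\|_0\|\widetilde{\bm u}\|_0.
\end{align*}
Using the Friedrichs inequality $\|\widetilde{\bm u}\|_0\le C\|\nabla\times\widetilde{\bm u}\|_0$ from Step~1 on the left and then Young's inequality on the right,
\begin{align*}
\tfrac1C\|\widetilde{\bm u}\|_0^2+k^2\|\widetilde{\bm u}\|_0^2\le\|\nabla\times\widetilde{\bm u}\|_0^2+k^2\|\widetilde{\bm u}\|_0^2\le\|\bm f\|_0\|\widetilde{\bm u}\|_0,
\end{align*}
which gives $(1/C+k^2)\|\widetilde{\bm u}\|_0\le\|\bm f\|_0$, hence $(k^2+1)\|\widetilde{\bm u}\|_0\le C\|\bm f\|_0$ after adjusting constants. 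Feeding this back into $\|\nabla\times\widetilde{\bm u}\|_0^2\le\|\bm f\|_0\|\widetilde{\bm u}\|_0\le C\|\bm f\|_0^2/(k^2+1)$ yields $\|\nabla\times\widetilde{\bm u}\|_0\le C\|\bm f\|_0/\sqrt{k^2+1}\le C\|\bm f\|_0/(k+1)$, i.e.\ $(k+1)\|\nabla\times\widetilde{\bm u}\|_0\le C\|\bm f\|_0$. Adding the two bounds gives \eqref{stablity-tilde}.

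\textbf{Expected main obstacle.} The only genuinely non-routine ingredient is making sure the Friedrichs/Poincar\'e inequality $\|\bm v\|_0\lesssim\|\nabla\times\bm v\|_0$ holds on $\bm X_{N,0}$ \emph{with a constant independent of $k$}; this is exactly where the hypothesis that $\Omega$ is simply connected with connected boundary is used, and it follows from Lemma~\ref{conceive} as indicated. One should also be slightly careful that testing with $\bm v=\widetilde{\bm u}$ is legitimate (it is, since $\widetilde{\bm u}\in\bm X_{N,0}$ is precisely the admissible test space), and that the constants absorbed at each step do not secretly depend on $k$ — they do not, because the Friedrichs constant and the Young's inequality splitting are $k$-free. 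Everything else is bookkeeping.
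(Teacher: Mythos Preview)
Your proof is correct and follows essentially the same approach as the paper: both invoke Lemma~\ref{conceive} with $\Gamma_s=\Gamma$ to obtain the Friedrichs inequality $\|\bm v\|_0\le C\|\nabla\times\bm v\|_0$ on $\bm X_{N,0}$, apply Lax--Milgram, and then test \eqref{s-1-1} with $\bm v=\widetilde{\bm u}$ and use Young's inequality to extract the $k$-explicit bound. The paper packages the last step slightly differently---working directly in the weighted norm $\big((k^2+1)\|\bm v\|_0^2+\|\nabla\times\bm v\|_0^2\big)^{1/2}$ and absorbing via a single Young inequality---but the content is identical.
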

\begin{proof}
By \Cref{conceive}, we have that
$\|\bm{v}\|_0\lesssim\|\bm\nabla\times\bm{v}\|_0$ for any $\bm{v}\in\bm{X} _{N,0}$. Therefore the bilinear form $a^+$ is continuous and coercive under the norm $\big(({k}^2+1)\|\bm{v}\|^2_0+\|\bm\nabla\times\bm{v}\|^2_0\big)^\frac12$. By the Lax-Milgram lemma,
\eqref{s-1-1} attains a unique solution $\widetilde{\bm{u}}$, and there holds
\begin{align*}
	({k}^2+1)\|\widetilde{\bm{u}}\|^2_0+\|\bm\nabla\times\widetilde{\bm{u}}\|^2_0\le C  \|\bm{f}\|_0\|\widetilde{\bm{u}}\|_0\le {\frac12({k}^2+1)\|\widetilde{\bm{u}}\|^2_0+C({k}^2+1)^{-1}\|\bm{f}\|_0^2},
\end{align*}
which implies \eqref{stablity-tilde}.
\end{proof}
\begin{lemma}\label{lemma24}  There hold that
\begin{itemize}%[leftmargin=22pt]
	\item[{\rm(i)}] for any given $\bm{w}\in [L^2(\Omega)]^3$, the problem \eqref{s-1-2} has a unique solution $\bm{\mathcal K}_k\bm{w}$ satisfying the following stability estimate:	
\end{itemize}
\begin{align}
	(k+1)\|\bm{\mathcal K}_k\bm{w}\|_0+\|\bm\nabla\times\bm{\mathcal K}_k\bm{w}\|_0
	\lesssim {k}\|\bm{w}\|_0;\label{stablity-K}
\end{align}	
\begin{itemize}%[leftmargin=22pt]
	\item [{\rm(ii)}] $\bm{\mathcal K}_k$ is a self-adjoint and  compact operator on $[L^2(\Omega)]^3$;
	\item [{\rm(iii)}] $\bm X_{N,0}$ admits a countably infinite orthonormal basis $\{\bm u_i\}$ of eigenvectors of $\bm{\mathcal K}_k$, with corresponding eigenvalues $\{\mu_i\}\subset\mathbb R$ satisfying $\mu_i\to 0$.
\end{itemize}
\end{lemma}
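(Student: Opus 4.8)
\emph{Overview and part (i).} The three assertions follow, respectively, from the Lax--Milgram lemma (exactly as in the proof of \Cref{lemma23}), the compact embedding $\bm X_{N,0}\hookrightarrow[L^2(\Omega)]^3$, and the spectral theorem for compact self-adjoint operators. For (i), \Cref{conceive} with $\Gamma=\Gamma_s$ gives $\|\bm v\|_0\le C\|\nabla\times\bm v\|_0$ on $\bm X_{N,0}$, so $a^+$ is bounded and coercive there with respect to the norm $\big((k^2+1)\|\bm v\|_0^2+\|\nabla\times\bm v\|_0^2\big)^{1/2}$, and $\bm v\mapsto-2k^2(\bm w,\bm v)$ is a bounded linear functional; Lax--Milgram then yields a unique $\bm K_k\bm w\in\bm X_{N,0}$. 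Taking $\bm v=\bm K_k\bm w$ in \eqref{s-1-2} and using coercivity gives $(k^2+1)\|\bm K_k\bm w\|_0^2+\|\nabla\times\bm K_k\bm w\|_0^2\lesssim a^+(\bm K_k\bm w,\bm K_k\bm w)=-2k^2(\bm w,\bm K_k\bm w)\le 2k^2\|\bm w\|_0\|\bm K_k\bm w\|_0$; absorbing $\|\bm K_k\bm w\|_0$ with Young's inequality (as in \Cref{lemma23}) and using $(k+1)^2\le 2(k^2+1)$ yields \eqref{stablity-K}.

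\emph{Part (ii).} Testing the equations defining $\bm K_k\bm w_1$ and $\bm K_k\bm w_2$ against $\bm K_k\bm w_2$ and $\bm K_k\bm w_1$ respectively and using the symmetry of $a^+$, one gets $(\bm K_k\bm w_1,\bm w_2)=-\tfrac1{2k^2}a^+(\bm K_k\bm w_1,\bm K_k\bm w_2)=(\bm w_1,\bm K_k\bm w_2)$, so $\bm K_k$ is self-adjoint on $[L^2(\Omega)]^3$. For compactness, part (i) shows that $\bm K_k$ sends $L^2$-bounded sets into sets bounded in $\bm X_{N,0}$ (in the $\bm H(\text{curl};\Omega)\cap\bm H(\text{div};\Omega)$ graph norm); the classical compact embedding $\bm X_{N,0}\hookrightarrow[L^2(\Omega)]^3$ valid on a Lipschitz polyhedron (Weber's compactness theorem) then makes such images precompact in $[L^2(\Omega)]^3$.

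\emph{Part (iii).} Applying the spectral theorem to the compact self-adjoint operator $\bm K_k$ produces an at most countable $L^2$-orthonormal family $\{\bm u_i\}$ of eigenvectors with real nonzero eigenvalues $\mu_i\to 0$, which together with an orthonormal basis of $\ker\bm K_k$ forms an orthonormal basis of $[L^2(\Omega)]^3$. From \eqref{s-1-2} one checks directly that $\bm K_k\bm w=0$ iff $(\bm w,\bm v)=0$ for all $\bm v\in\bm X_{N,0}$, i.e. $\ker\bm K_k=\bm X_{N,0}^{\perp}$ in $[L^2(\Omega)]^3$, whence $\overline{\mathrm{ran}\,\bm K_k}=(\ker\bm K_k)^{\perp}=\overline{\bm X_{N,0}}^{L^2}$; since $\bm u_i=\mu_i^{-1}\bm K_k\bm u_i$, each $\bm u_i$ lies in $\bm X_{N,0}$, the family is complete in $\bm X_{N,0}$, and (using that $(\nabla\times\bm u_i,\nabla\times\bm u_j)=0$ for $i\neq j$, which follows from the eigenrelation and the $L^2$-orthogonality) the expansion of any $\bm v\in\bm X_{N,0}$ converges in the $\bm X_{N,0}$-norm as well. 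Finally, since $\bm X_{N,0}$ is infinite dimensional, so is its $L^2$-closure, which forces infinitely many eigenvalues, giving the countably infinite basis.

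\emph{Main obstacle.} The only ingredient beyond routine Lax--Milgram/spectral-theorem bookkeeping is the compact embedding $\bm X_{N,0}\hookrightarrow[L^2(\Omega)]^3$ for a Lipschitz polyhedron, which is the heart of the compactness in (ii); a minor subtlety worth flagging is that ``orthonormal basis of $\bm X_{N,0}$'' has to be understood through the $L^2$-closure, which is precisely what the identity $\ker\bm K_k=\bm X_{N,0}^{\perp}$ makes transparent.
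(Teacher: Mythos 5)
Your proof is correct and follows essentially the same route as the paper: part (i) via the Lax--Milgram/coercivity argument of \Cref{lemma23} (equivalently, applying that lemma with $\bm f=-2k^2\bm w$), part (ii) from the symmetry of $a^+$ plus the compact embedding $\bm X_{N,0}\hookrightarrow[L^2(\Omega)]^3$, and part (iii) from the spectral theorem together with the identification $\ker\bm K_k=\bm X_{N,0}^{\perp}$. The paper only sketches these steps, so your filled-in details (including the correct observation that ``orthonormal basis of $\bm X_{N,0}$'' must be read through its $L^2$-closure) are a faithful elaboration rather than a different argument.
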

\begin{proof} It is clear that\\
(i) is a consequence of  \Cref{lemma23} with $\bm f=-2k^2\bm w$;\\
(ii) follows directly from  the definition \eqref{s-1-2} of $\bm{\mathcal K}_k$  and  the compact embedding of $ \bm{X}_{ N,0}$  in  $[L^2(\Omega)]^3$   (cf. \cite[Page 87, Theorem 4.7]{MR2059447});\\
(iii) follows from (ii), the spectral theory of compact self-adjoint  operator on Hilbert space   (cf. \cite[Page 60, Theorem 6.21]{MR3027462}), and the fact that  the orthogonal complement of the kernel of $\bm{\mathcal K}_k$ is $\bm X_{N,0}$ , which {could} be proved by  the definition \eqref{s-1-2} of $\bm{\mathcal K}_k$.
\end{proof}

Let $\{\lambda_{i}\}_{i=1,2,\cdots}$ be the set of nonzero eigenvalues of the Maxwell operator $\bm\nabla\times\bm\nabla\times$ on $H_0({\rm curl};\Omega)$ such that $0<\lambda_1\le\lambda_2\le\cdots $
and $\bm u_i\in\bm H_0({\rm curl};\Omega)$ be the corresponding eigenfunctions {satisfying}
\begin{subequations}\label{eigen}
\begin{align}
	(\bm\nabla\times\bm u_i, \bm\nabla\times\bm v)&=\lambda_i ({\bm{u}}_i,\bm v)\qquad \forall \bm v\in \bm H_0({\rm curl};\Omega),\\
	\| \bm u_i\|_0&=1.
\end{align}
\end{subequations}

\begin{lemma} The eigenvalues of $\bm{\mathcal K}_k$ {consist} of $\mu_i:=\dfrac{-2k^2}{\lambda_i+k^2}, i=1,2,\cdots$, with corresponding eigenfunctions $\bm u_i$.\label{lemma2.5eigval}
\end{lemma}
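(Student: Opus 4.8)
\emph{Proof sketch.}
The plan is to prove the two inclusions separately: every pair $(\mu_i,\bm u_i)$ is an eigenpair of $\bm K_k$, and conversely every eigenpair of $\bm K_k$ is of this form. For the first direction, I would begin by checking that $\bm u_i\in\bm X_{N,0}$. Taking $\bm v=\nabla q$ with $q\in H^1_0(\Omega)$ in \eqref{eigen} and using $\nabla\times\nabla q=\bm 0$ gives $\lambda_i(\bm u_i,\nabla q)=0$; since $\lambda_i\neq0$, this yields $(\bm u_i,\nabla q)=0$ for all $q\in H^1_0(\Omega)$, i.e.\ $\nabla\cdot\bm u_i=0$, so together with $\bm u_i\in\bm H_0({\rm curl};\Omega)$ we get $\bm u_i\in\bm X_{N,0}$. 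Then for any $\bm v\in\bm X_{N,0}\subset\bm H_0({\rm curl};\Omega)$, \eqref{eigen} and the definition \eqref{eq:apm} give $a^+(\bm u_i,\bm v)=(\nabla\times\bm u_i,\nabla\times\bm v)+k^2(\bm u_i,\bm v)=(\lambda_i+k^2)(\bm u_i,\bm v)$. Multiplying through by $\mu_i=-2k^2/(\lambda_i+k^2)$ shows $a^+(\mu_i\bm u_i,\bm v)=-2k^2(\bm u_i,\bm v)$ for all $\bm v\in\bm X_{N,0}$, so $\mu_i\bm u_i$ solves \eqref{s-1-2} with $\bm w=\bm u_i$; the uniqueness in \Cref{lemma24}(i) then forces $\bm K_k\bm u_i=\mu_i\bm u_i$.

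For the converse, let $\nu$ be an eigenvalue of $\bm K_k$ with eigenfunction $\bm\phi$, normalised so $\|\bm\phi\|_0=1$. Since $\bm K_k$ maps into $\bm X_{N,0}$, the case $\nu\neq0$ gives $\bm\phi=\nu^{-1}\bm K_k\bm\phi\in\bm X_{N,0}$, while $\nu=0$ is impossible (testing \eqref{s-1-2} with $\bm w=\bm\phi$, $\bm K_k\bm\phi=\bm0$, against $\bm v=\bm\phi$ would force $\|\bm\phi\|_0=0$). So $\bm\phi\in\bm X_{N,0}$, and $\bm K_k\bm\phi=\nu\bm\phi$ together with \eqref{s-1-2} gives $\nu\big[(\nabla\times\bm\phi,\nabla\times\bm v)+k^2(\bm\phi,\bm v)\big]=-2k^2(\bm\phi,\bm v)$ for all $\bm v\in\bm X_{N,0}$. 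Choosing $\bm v=\bm\phi$ and using \Cref{conceive} with $\Gamma_s=\Gamma$ (so $\|\bm\phi\|_0\le C\|\nabla\times\bm\phi\|_0$ on $\bm X_{N,0}$, whence $\|\nabla\times\bm\phi\|_0>0$) yields $-2k^2/\nu=\|\nabla\times\bm\phi\|_0^2+k^2>k^2$, so $\nu\in(-2,0)$. Setting $\lambda:=-2k^2/\nu-k^2=\|\nabla\times\bm\phi\|_0^2>0$, the displayed identity rearranges to $(\nabla\times\bm\phi,\nabla\times\bm v)=\lambda(\bm\phi,\bm v)$ for all $\bm v\in\bm X_{N,0}$.

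It then remains to upgrade this identity to all of $\bm H_0({\rm curl};\Omega)$. Given $\bm v\in\bm H_0({\rm curl};\Omega)$, I would apply \Cref{hel} to split $\bm v=\bm z+\nabla\psi$ with $\nabla\cdot\bm z=0$ and $\psi\in H^1_0(\Omega)$; then $\bm z=\bm v-\nabla\psi\in\bm H({\rm curl};\Omega)$ (the curls agree), and $\bm n\times\bm z|_\Gamma=-\bm n\times\nabla\psi|_\Gamma=\bm0$ because $\psi|_\Gamma=0$ makes the tangential trace of $\nabla\psi$ vanish, so $\bm z\in\bm X_{N,0}$. Since $\nabla\times\nabla\psi=\bm0$ and $(\bm\phi,\nabla\psi)=0$ ($\bm\phi$ divergence-free, $\psi\in H^1_0(\Omega)$), testing the identity of the previous step with $\bm z$ gives $(\nabla\times\bm\phi,\nabla\times\bm v)=\lambda(\bm\phi,\bm v)$. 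Thus $\lambda>0$ is an eigenvalue of $\nabla\times\nabla\times$ on $\bm H_0({\rm curl};\Omega)$ with eigenfunction $\bm\phi$, hence $\lambda=\lambda_j$ for some $j$ and $\nu=-2k^2/(\lambda_j+k^2)=\mu_j$. Combined with \Cref{lemma24}(iii), which says $\bm X_{N,0}$ is spanned by eigenvectors of $\bm K_k$ (so no further eigenvalues are hiding), this identifies the eigenvalues of $\bm K_k$ as exactly $\{\mu_i\}$, with the $\bm u_i$ among the corresponding eigenfunctions.

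The step I expect to be the main obstacle is the last one: extending the variational eigenvalue identity from the constrained test space $\bm X_{N,0}$ to all of $\bm H_0({\rm curl};\Omega)$ via the Helmholtz splitting, including the verification that the solenoidal part $\bm z$ inherits the vanishing tangential trace; one must also handle the sign of $\nu$ carefully in the converse step to be certain the resulting $\lambda$ is a genuine positive Maxwell eigenvalue and not $0$.
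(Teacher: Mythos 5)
Your proof is correct and takes essentially the same route as the paper: restrict the Maxwell eigenvalue identity \eqref{eigen} to the divergence-free space $\bm X_{N,0}$, rewrite it as $a^{+}(\bm u_i,\bm v)=(\lambda_i+k^2)(\bm u_i,\bm v)$, and invoke the definition \eqref{s-1-2} of $\bm K_k$; your converse step, which passes back from $\bm X_{N,0}$ to $\bm H_0({\rm curl};\Omega)$ via the Helmholtz decomposition of \Cref{hel}, is precisely the equivalence the paper's proof asserts and then dismisses as ``simple calculations''. One small caveat: since \Cref{lemma24} regards $\bm K_k$ as an operator on all of $[L^2(\Omega)]^3$ with kernel equal to the orthogonal complement of $\bm X_{N,0}$, the value $\nu=0$ \emph{is} an eigenvalue of $\bm K_k$ (with eigenfunctions such as $\nabla\psi$, $\psi\in H^1_0(\Omega)$), and your argument excluding it is circular, because testing \eqref{s-1-2} with $\bm v=\bm\phi$ presupposes $\bm\phi\in\bm X_{N,0}$ --- exactly what fails on the zero eigenspace. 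The lemma, as the paper uses it (and as your remaining argument treats it), concerns the nonzero eigenvalues, equivalently the eigenpairs with eigenfunctions in $\bm X_{N,0}$, and under that reading your exclusion of $\nu=0$ and the rest of the converse are sound.
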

\begin{proof} First we note that $\bm u_i\in \bm X_{N,0}$ since $\lambda_i\neq 0$. It follows from the Helmholtz decomposition \Cref{hel} that  \eqref{eigen} is equivalent to the following eigenvalue problem: Find $(\lambda_i,\bm u_i)\in \mathbb R \times\bm X_{N,0}$ such that
\begin{subequations}\label{eigen2}
	\begin{align}
		(\bm\nabla\times\bm u_i, \bm\nabla\times\bm v)&=\lambda_i ({\bm{u}}_i,\bm v)\qquad \forall\, \bm v\in \bm X_{N,0},\label{eigen2a}\\
		\| \bm u_i\|_0&=1.
	\end{align}
\end{subequations}
Clearly, \eqref{eigen2a} is equivalent to
\eqn{a^{+}({\bm{u}}_i,\bm v)=(\lambda_i+k^2)({\bm{u}}_i,\bm v)\qquad \forall \,\bm v\in \bm X_{N,0},}
and we complete the proof by using the definition of $\bm{\mathcal K}_k$ and some simple calculations.

\end{proof}

The well-posedness of \eqref{s-1} is given in the next lemma.
\begin{lemma}\label{LeigKk}
Suppose ${k}^2$ is not an eigenvalue of \eqref{eigen}, then the problem \eqref{s-1} has a unique solution. Moreover, the inverse of $\bm{\mathcal I}+\bm{\mathcal K}_k$ exists, and
\begin{align}\label{KC}
	\|(\bm{\mathcal I}+\bm{\mathcal K}_k)^{-1}\bm w\|_0\le \mathcal M_k\|\bm w\|_0\qquad \forall \,\bm w\in \bm X_{N,0},
\end{align}
where
\eq{\label{Mk}
	\mathcal M_k:=\sup_{i=1,2,\cdots}\left|\frac{\lambda_i+k^2}{\lambda_i-k^2}\right|\;.}
	\end{lemma}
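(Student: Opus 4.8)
The plan is to diagonalize the operator $\bm I+\bm K_k$ against the orthonormal eigenbasis $\{\bm u_i\}$ of $\bm K_k$ supplied by \Cref{lemma24}(iii) together with the lemma identifying the eigenvalues of $\bm K_k$, read off invertibility and the bound \eqref{KC} directly from its spectrum, and then deduce the unique solvability of \eqref{s-1} from the relation \eqref{op}.

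First I would compute the spectrum of $\bm I+\bm K_k$ on $\bm X_{N,0}$. Since $\bm K_k\bm u_i=\mu_i\bm u_i$ with $\mu_i=-2k^2/(\lambda_i+k^2)$, we get
\[
(\bm I+\bm K_k)\bm u_i=(1+\mu_i)\bm u_i,\qquad 1+\mu_i=\frac{\lambda_i-k^2}{\lambda_i+k^2}.
\]
The hypothesis that $k^2$ is not a Maxwell eigenvalue of \eqref{eigen} gives $\lambda_i\neq k^2$, hence $1+\mu_i\neq 0$ for all $i$; and since $\lambda_i\to\infty$ we have $1+\mu_i\to 1$, so only finitely many terms can be close to $0$ and each of those is nonzero, whence $\inf_i|1+\mu_i|>0$ — equivalently, $\mathcal M_k=\sup_i|(\lambda_i+k^2)/(\lambda_i-k^2)|$ is finite. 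Writing $\bm w\in\bm X_{N,0}$ as $\bm w=\sum_i w_i\bm u_i$ with $\|\bm w\|_0^2=\sum_i w_i^2$, the series $\bm v:=\sum_i (1+\mu_i)^{-1}w_i\bm u_i$ converges in $[L^2(\Omega)]^3$, satisfies $(\bm I+\bm K_k)\bm v=\bm w$, and lies in $\bm X_{N,0}$ because $\bm v=\bm w-\bm K_k\bm v$ and $\bm K_k$ maps into $\bm X_{N,0}$; uniqueness is immediate from $1+\mu_i\neq0$. Thus $\bm I+\bm K_k\colon\bm X_{N,0}\to\bm X_{N,0}$ is a bijection and
\[
\|(\bm I+\bm K_k)^{-1}\bm w\|_0^2=\sum_i\Big|\frac{\lambda_i+k^2}{\lambda_i-k^2}\Big|^2 w_i^2\le\mathcal M_k^2\sum_i w_i^2=\mathcal M_k^2\|\bm w\|_0^2,
\]
which is \eqref{KC}.

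Next I would settle the well-posedness of \eqref{s-1}. From \eqref{s-1-2} and \eqref{eq:apm} one has $a^+(\bm K_k\bm v,\bm\varphi)=-2k^2(\bm v,\bm\varphi)$, hence $a^+((\bm I+\bm K_k)\bm v,\bm\varphi)=a^-(\bm v,\bm\varphi)$ for all $\bm v,\bm\varphi\in\bm X_{N,0}$. Let $\widetilde{\bm u}\in\bm X_{N,0}$ be the solution of \eqref{s-1-1}, unique by \Cref{lemma23}. Then $\bm u\in\bm X_{N,0}$ solves \eqref{s-1} if and only if $a^+((\bm I+\bm K_k)\bm u,\bm\varphi)=(\bm f,\bm\varphi)=a^+(\widetilde{\bm u},\bm\varphi)$ for all $\bm\varphi$, and by the coercivity of $a^+$ invoked in \Cref{lemma23} this is equivalent to $(\bm I+\bm K_k)\bm u=\widetilde{\bm u}$, i.e.\ to \eqref{op}. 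Since $\bm I+\bm K_k$ is invertible, $\bm u=(\bm I+\bm K_k)^{-1}\widetilde{\bm u}$ is the unique solution of \eqref{s-1}.

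I do not anticipate a genuine difficulty here; the only points requiring care are verifying that $\inf_i|1+\mu_i|>0$ (so that $\mathcal M_k<\infty$ and the inverse is bounded), which uses $\mu_i\to0$ from \Cref{lemma24}(iii), and checking that the eigen-expansion of $(\bm I+\bm K_k)^{-1}\bm w$ actually lies in $\bm X_{N,0}$ rather than merely in $[L^2(\Omega)]^3$, which follows from $\bm v=\bm w-\bm K_k\bm v$ and the mapping property of $\bm K_k$.
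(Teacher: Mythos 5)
Your proof is correct and takes essentially the same route as the paper: diagonalize $\bm I+\bm K_k$ in the $L^2$-orthonormal eigenbasis of $\bm K_k$ from \Cref{lemma24}(iii), observe that the eigenvalues $(\lambda_i-k^2)/(\lambda_i+k^2)$ are all nonzero and (since $\mu_i\to 0$) bounded away from zero, read off invertibility and \eqref{KC} from Parseval, and then obtain the well-posedness of \eqref{s-1} via \eqref{op}. You merely spell out the details the paper leaves implicit, namely the finiteness of $\mathcal M_k$ and the fact that $(\bm I+\bm K_k)^{-1}\bm w$ lands back in $\bm X_{N,0}$.
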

	\begin{proof}    From \Cref{lemma2.5eigval}, the eigenvalues of $\bm{\mathcal I}+\bm{\mathcal K}_k$ are given by $\dfrac{\lambda_i-k^2}{\lambda_i+k^2}, i=1,2,\cdots,$ which are all nonzero. Therefore, $\bm{\mathcal I}+\bm{\mathcal K}_k$ is invertible and \eqref{KC} follows from the combination of \Cref{lemma24} (iii) and the $\bm L^2$-orthogonality of the basis $\{\bm u_i, i=1,2,\cdots\}.$  Finally, the well-posedness of \eqref{s-1} follows by using  \eqref{op}.
	\end{proof}

	\begin{remark} %{  Please check this remark.}
Concerning the feature of $\mathcal M_k$, it could be arbitrarily large if $k^2$ approaches to any nonzero Maxwell eigenvalue. In addition, we could derive a lower bound for $\mathcal M_k$ as considering that $\Omega$ is a convex polyhedron. Similar to \cite[Theorem 4.1]{MR1672271}, the nonzero Maxwell eigenvalues are also eigenvalues of the Laplace operator with Neumann boundary condition, whose $n^{\rm th}$ eigenvalue $\hat\lambda_n$  behaves asymptotically as  $\hat\lambda_n\sim \hat c n^\frac23$, where $\hat c$ is a constant depending only on the domain $\Omega$ (see e.g.\cite{Clark1967,Zhang2015}). Therefore, if the wave number $k$ is sufficiently large and   $k^2$ is located in $(\hat\lambda_n, \hat\lambda_{n+1})$ for some large $n$, then it holds that
\eqn{\mathcal M_k=\sup_{i=n,n+1}\left|\frac{\lambda_i+k^2}{\lambda_i-k^2}\right|\gtrsim \frac{n^\frac23}{(n+1)^\frac23-n^\frac23}
	%=\frac{1}{(1+1/n)^\frac23-1}
	\gtrsim n\gtrsim k^3.}
	\end{remark}
	
	In the rest of  this section, we derive  stability and  regularity results for the {\em indefinite} time-harmonic Maxwell's equations \eqref{source}.
	%To simplify the notation, we define
	%\begin{align*}
	%\mathcal M_k=\sup_{\lambda_i\in E_{\lambda}}\left|\frac{\lambda_i+k^2}{\lambda_i-k^2}\right|
	%\end{align*}

	\begin{lemma}\eqref{source}  has a unique weak solution $(\bm{u},p)$,  which takes the following  stability estimate
\begin{align}\label{eq:regest}
	({k}^2+1)\|\bm{u}\|_{0}+({k}+1)\|\bm\nabla\times\bm{u}\|_{0}\lesssim {\mathcal M_k}\|\bm{f}\|_0.
\end{align}
\end{lemma}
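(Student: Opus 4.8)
The plan is to decouple the problem into its divergence-free and gradient parts using the structure already laid out. The key observation is that the solution $\bm u$ of \eqref{source} splits naturally: testing against $\bm v\in\bm X_{N,0}$ gives the divergence-free component governed by \eqref{s-1}, while testing against $\nabla q$ with $q\in H^1_0(\Omega)$ gives the Lagrange multiplier $p$ via \eqref{s-2}. Since \eqref{s-2} is a standard coercive Poisson-type problem (with coefficient $k^2+1$), the Lax--Milgram lemma immediately yields existence, uniqueness, and the bound $(k^2+1)\|\nabla p\|_0\le\|\bm f\|_0$; because $\nabla p$ is itself the gradient part and $\mathcal M_k\ge 1$, this contributes terms already dominated by the right-hand side of \eqref{eq:regest}. (One should also verify that $p$ so defined makes $\nabla p$ the correct component and that $\bm u$ reconstructed from the two pieces indeed solves \eqref{source}, including recovering $\nabla\cdot\bm u=0$ from the equation.)

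For the essential divergence-free part, I would invoke \Cref{LeigKk}: since $k^2$ is not a Maxwell eigenvalue, \eqref{s-1} has a unique solution, and by \eqref{op} we have $\bm u=(\bm I+\bm K_k)^{-1}\widetilde{\bm u}$, where $\widetilde{\bm u}$ solves the positive-definite auxiliary problem \eqref{s-1-1}. Then \eqref{KC} gives $\|\bm u\|_0\le\mathcal M_k\|\widetilde{\bm u}\|_0$, and \Cref{lemma23} controls $\widetilde{\bm u}$: $(k^2+1)\|\widetilde{\bm u}\|_0+(k+1)\|\nabla\times\widetilde{\bm u}\|_0\le C\|\bm f\|_0$. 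Combining these yields $(k^2+1)\|\bm u\|_0\le C\mathcal M_k\|\bm f\|_0$, which is the first half of \eqref{eq:regest}.

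The remaining piece is the curl estimate $(k+1)\|\nabla\times\bm u\|_0\le C\mathcal M_k\|\bm f\|_0$. Here I would go back to \eqref{s-1} and test with $\bm v=\bm u$ itself: $\|\nabla\times\bm u\|_0^2-k^2\|\bm u\|_0^2=(\bm f,\bm u)$, so $\|\nabla\times\bm u\|_0^2\le k^2\|\bm u\|_0^2+\|\bm f\|_0\|\bm u\|_0$. Using the already-established bound on $\|\bm u\|_0$ (noting $\|\bm u\|_0\le C\mathcal M_k(k^2+1)^{-1}\|\bm f\|_0$ and $k^2\le k^2+1$), the first term is $\lesssim k^2\mathcal M_k^2(k^2+1)^{-2}\|\bm f\|_0^2\lesssim\mathcal M_k^2(k^2+1)^{-1}\|\bm f\|_0^2$, and the second is bounded similarly; dividing out and taking square roots gives $\|\nabla\times\bm u\|_0\lesssim\mathcal M_k(k+1)^{-1}\|\bm f\|_0$ after tracking the powers of $(k+1)$. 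I expect the main technical care, rather than a genuine obstacle, to lie in bookkeeping the powers of $k$ and the factor $\mathcal M_k$ so that each term lands under $C\mathcal M_k\|\bm f\|_0$ with the stated weights $(k^2+1)$ and $(k+1)$; one subtlety is that $\mathcal M_k$ need not be bounded below by a power of $k$ (it can be close to $1$), so the argument must not secretly use $\mathcal M_k\gtrsim k$. Finally I would note that uniqueness of the full pair $(\bm u,p)$ follows from uniqueness of each component together with the uniqueness of the Helmholtz decomposition.
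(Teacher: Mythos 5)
Your argument is correct, and it splits into a part that coincides with the paper and a part that takes a genuinely different (and slightly more elementary) route. For the $L^2$ bound you do exactly what the paper does: write $\bm u=(\bm I+\bm K_k)^{-1}\widetilde{\bm u}$ via \eqref{op}, apply \eqref{KC} and then \eqref{stablity-tilde}, obtaining $(k^2+1)\|\bm u\|_0\le C\mathcal M_k\|\bm f\|_0$. For the curl bound the paper instead uses the operator identity $\nabla\times\bm u=\nabla\times\widetilde{\bm u}-\nabla\times\bm K_k\bm u$ together with the stability estimate \eqref{stablity-K} for $\bm K_k$, whereas you test \eqref{s-1} with $\bm v=\bm u$ to get the G\aa rding-type identity $\|\nabla\times\bm u\|_0^2=k^2\|\bm u\|_0^2+(\bm f,\bm u)$ and then insert the already-proved $L^2$ bound. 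Your bookkeeping checks out: both terms on the right are $\lesssim\mathcal M_k^2(k^2+1)^{-1}\|\bm f\|_0^2$ (using only $\mathcal M_k\ge 1$, which holds since each ratio $(\lambda_i+k^2)/|\lambda_i-k^2|$ exceeds $1$), and $(k+1)\backsimeq(k^2+1)^{1/2}$ turns the square root into the stated weight. What your route buys is that it bypasses \Cref{lemma24}(i) entirely for this step; what the paper's route buys is that the same operator-splitting template is reused later in the regularity theorem. On existence and uniqueness, your sketch (uniqueness of the divergence-free part from \Cref{LeigKk}, of $p$ from Lax--Milgram, and reassembly via the Helmholtz decomposition of the test space) is at least as explicit as the paper, whose proof of this lemma only records the estimates.
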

\begin{proof}
By combining \eqref{op}, \eqref{stablity-tilde}, \eqref{KC}, we get
\begin{equation}\label{eq:218}
	({k}^2+1)\|\bm{u}\|_{0}=({k}^2+1)\|(\bm{\mathcal I}+\bm{\mathcal K}_k)^{-1}\widetilde{\bm{u}}\|_{0}
	\le{\mathcal M_k}({k}^2+1)\|\widetilde{\bm{u}}\|_{0}
	\lesssim {\mathcal M_k}\|\bm{f}\|_0.
\end{equation}
It follows from \eqref{op}, \eqref{stablity-tilde}, \eqref{stablity-K} and \eqref{eq:218} that
\begin{align}\label{eq:219}
	\begin{split}
		({k}+1)\|\bm\nabla\times\bm{u}\|_{0}&\le ({k}+1)\|\bm\nabla\times\bm{\mathcal K}_k\bm{u}\|_{0}+({k}+1)\|\bm\nabla\times \widetilde{\bm{u}}\|_{0} \\
		&\lesssim ({k}^2+1)\|\bm{u}\|_0+\|\bm{f}\|_0\\
		&\lesssim {\mathcal M_k}\|\bm{f}\|_0,
	\end{split}
\end{align}
which together with \eqref{eq:218}  implies that \eqref{eq:regest} holds.	
\end{proof}

\begin{lemma}[cf. {\cite[Proposition 3.7]{MR1626990}}]  \label{embed}
If the domain $\Omega$ is a Lipschitz polyhedron, then $\bm{X}_T(\Omega)$ and $\bm{X}_N(\Omega)$ are continuously embedded into $[H^s(\Omega)]^3$ for some  real number $s\in (1/2,1].$
\end{lemma}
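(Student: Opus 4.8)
The plan is to reduce this vector regularity to sharp scalar elliptic regularity on a polyhedron, together with an explicit analysis of the edge and vertex singularities; this is the route of \cite[Proposition 3.7]{MR1626990}. I treat $\bm X_N(\Omega)=\bm H_0(\text{curl};\Omega)\cap\bm H(\text{div};\Omega)$ in detail, the case of $\bm X_T(\Omega)$ being entirely analogous with the tangential and normal boundary conditions --- equivalently, the Dirichlet and Neumann scalar Laplacians --- interchanged. The first step is a Helmholtz splitting: given $\bm v\in\bm X_N(\Omega)$, let $\psi\in H^1_0(\Omega)$ solve $\Delta\psi=\nabla\cdot\bm v$ with homogeneous Dirichlet data and set $\bm v_0:=\bm v-\nabla\psi$. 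Since $\psi|_\Gamma=0$, the field $\nabla\psi$ has vanishing tangential trace, whence $\bm v_0\in\bm H_0(\text{curl};\Omega)$, $\nabla\cdot\bm v_0=0$ and $\nabla\times\bm v_0=\nabla\times\bm v$; that is, $\bm v_0\in\bm X_{N,0}$, and the map $\bm v\mapsto(\psi,\bm v_0)$ is bounded.

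For the gradient part I would invoke the regularity theory for the scalar Laplacian on a Lipschitz polyhedron (the corner and edge calculus of Grisvard and Dauge): there is an index $s\in(1/2,1]$, determined by the reentrant edge openings and vertex types of $\Omega$, such that the Dirichlet problem with right-hand side in $L^2(\Omega)$ has its solution in $H^{1+s}(\Omega)$, with the corresponding \emph{a priori} bound. Applied to $\psi$ this gives $\nabla\psi\in[H^s(\Omega)]^3$ and $\|\nabla\psi\|_s\lesssim\|\nabla\cdot\bm v\|_0$.

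It remains to estimate $\bm v_0\in\bm X_{N,0}$ in $[H^s(\Omega)]^3$, and this is the substantive step. Using that $\Omega$ is simply connected with connected boundary, one represents $\bm v_0$ through a vector potential and performs a regular-plus-singular decomposition $\bm v_0=\bm v_{\mathrm{reg}}+\bm v_{\mathrm{sing}}$, where $\bm v_{\mathrm{reg}}\in[H^1(\Omega)]^3$ and $\bm v_{\mathrm{sing}}$ is a finite combination of the explicit edge and vertex singular fields of the curl-curl system with the electric (resp.\ magnetic, for $\bm X_T$) boundary conditions. Near a reentrant edge of opening $\omega$ the dominant singular field scales like $r^{\pi/\omega-1}$ in $\bm v_0$, hence lies in $[H^s(\Omega)]^3$ for every $s<\pi/\omega$, and the polyhedral vertex contributions are controlled by their own strictly positive exponents. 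Taking $s$ below the minimum, over all edges and vertices, of these exponents and of the scalar Dirichlet/Neumann indices yields $\bm v_0\in[H^s(\Omega)]^3$ with $\|\bm v_0\|_s\lesssim\|\bm v_0\|_0+\|\nabla\times\bm v_0\|_0$; combined with the bound on $\nabla\psi$, this gives $\bm v\in[H^s(\Omega)]^3$ and $\|\bm v\|_s\lesssim\|\bm v\|_{\bm H(\text{curl};\Omega)}+\|\bm v\|_{\bm H(\text{div};\Omega)}$.

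The main obstacle is precisely this last step: constructing the regular-plus-singular decomposition and proving that every singular exponent occurring in $\bm X_N(\Omega)$ and $\bm X_T(\Omega)$ is bounded below by some $s>1/2$. This rests on the fact that a Lipschitz polyhedron has no crack (slit) edges, so every edge opening satisfies $\omega<2\pi$ and hence $\pi/\omega>1/2$, together with the analogous statement at polyhedral vertices; making it rigorous requires the weighted Sobolev calculus near edges and corners of Grisvard and Dauge. The remaining technical burden is bookkeeping: the continuity of all the decompositions (so that the estimate, not merely the inclusion, holds), the matching of boundary conditions throughout, and the choice of a single exponent $s$ that serves simultaneously for the gradient part and the divergence-free part.
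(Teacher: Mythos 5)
The paper does not actually prove this lemma: it is stated as a citation to \cite[Proposition 3.7]{MR1626990} (Amrouche--Bernardi--Dauge--Girault), so there is no in-paper argument to compare against. Your outline is a legitimate reconstruction of how that reference proceeds: split off a scalar potential $\psi$ solving the Dirichlet (resp.\ Neumann, for $\bm X_T$) problem so that $\nabla\psi$ absorbs the divergence, control $\nabla\psi$ in $[H^s(\Omega)]^3$ by the $H^{1+s}$ shift theorem for the scalar Laplacian on a polyhedron, and then handle the remaining divergence-free field. Two remarks. First, your treatment of the divergence-free part is heavier than necessary: you propose a regular-plus-singular decomposition of the curl--curl system itself, with its own edge and vertex exponents. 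The cited proof instead uses the Birman--Solomyak type decomposition $\bm v = \bm w + \nabla\varphi$ with $\bm w\in[H^1(\Omega)]^3$ and $\varphi$ solving a scalar Dirichlet/Neumann problem with $L^2$ right-hand side, so that \emph{all} the singular behaviour is carried by a scalar potential and the only regularity input is the scalar shift theorem (which gives $s>1/2$ on any polyhedron because the corner and edge exponents of the Dirichlet and Neumann Laplacians exceed $1/2$ in the absence of cracks). This avoids the Maxwell singular-function calculus entirely, and is why the lemma can be quoted with a one-line reference. Second, as you concede, your sketch does not actually establish the decisive step (existence and continuity of the regular-plus-singular splitting, and the lower bound $s>1/2$ on every exponent); as written it is an outline that defers the substance to Grisvard--Dauge machinery, which is acceptable here only because the paper itself treats the statement as a quoted result rather than something to be proved.
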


To end this section, we present the wavenumber explicit regularity result of \eqref{source}.
\begin{theorem} [Regularity]\label{reg}
Let $(\bm{u},p)$ be the solution of \eqref{source}, then there exists a regularity index $s\in (1/2,1]$ depending on $\Omega$, such that $\bm{u}\in \bm H^s({\rm curl};\Omega)$ and
\begin{subequations}
	\begin{align}\label{eq:FreqExpReg}
		({k}+1)\|\bm{u}\|_{s}+\|\bm\nabla\times\bm{u}\|_{s}\lesssim    {\mathcal M_k} \|\bm{f}\|_0.
	\end{align}
	Moreover, if $\bm f\in \bm H({\rm div};\Omega)$, it holds  {that $p\in H^{s+1}(\Omega)$ and}
	\begin{align}\label{eq:FreqExpReg2}
		({k}^2+1)\|p\|_{1+s}\lesssim  \|\nabla\cdot\bm f\|_0.
	\end{align}
	In particular, $s=1$ if $\Omega$ is convex. Furthermore, if $\bm f\in \bm H({\rm div^0};\Omega)$ and $\Omega$ is convex, there exists  some regularity index $s^*\in (1,2]$ depending on $\Omega$, such that $\bm{u}\in \bm H^{s^*}(\Omega)${  , and}
	\begin{align}\label{rc}
		\|\bm u\|_{s^*}\lesssim {\mathcal M_k} \|\bm{f}\|_0.
	\end{align}
\end{subequations}
\end{theorem}
\begin{proof}
Let $\bm{u}$ be the solution of \eqref{source}. Note that on the boundary $\Gamma$, $\bm{n}\cdot(\bm\nabla\times\bm{u})=0$ since $\bm{n}\times\bm{u}=\bm{0}$.
Hence, there exists a real number  $s_1\in (1/2,1]$ in terms of \Cref{embed} such that
\begin{align*}
	\|\bm{u}\|_{s_1}&\lesssim  \|\bm{u}\|_0+\|\bm\nabla\times\bm{u}\|_0,\\
	\|\bm\nabla\times\bm{u}\|_{s_1}&\lesssim   \|\bm\nabla\times\bm{u}\|_0+\| \bm\nabla\times(\bm\nabla\times\bm{u})\|_0.
\end{align*}
We apply $\nabla\cdot$ to \eqref{o1}, and then combine \eqref{o2} to obtain
\begin{subequations} \label{ellip}
	\begin{align}
		(k^2+1)\Delta p&=\nabla\cdot\bm f,\qquad\text{in }\Omega,\\
		p&=0,\qquad\quad\text{on }\partial\Omega.
	\end{align}
\end{subequations}
Clearly, $(k^2+1)\|\nabla p\|_0\lesssim \|f\|_0$.
Since $\Omega$ is a  Lipschitz polyhedron, by the standard elliptic regularity results  \cite{MR961439}, we obtian the regularity result for \eqref{ellip}: there exists a real number $s_2\in (1/2,1]$ such that
\begin{align*}
	\|({k}^2+1)p\|_{1+s_2}\lesssim  \|\nabla\cdot\bm{f}\|_0.
\end{align*}
Therefore, inequalities \eqref{eq:FreqExpReg} and \eqref{eq:FreqExpReg2} hold with  $s=\min(s_1,s_2)$. The last inequality could be derived by using the regularity result in \cite[\S4]{MR1753704} and \eqref{eq:regest}. This completes the proof.
\end{proof}
\begin{remark}
In \cite{MR1929626}, it has been proved that
\begin{align*}
	\|\bm{u}\|_{s}+\|\bm\nabla\times\bm{u}\|_{s}\le C_{\rm reg}   \|\bm{f}\|_0,
\end{align*}
where $C_{\rm reg}$ depends on $k$. Here, we  show explicitly
how the constant $C_{\rm reg}$  is dependent on the wavenumber $k$
\end{remark}

%%%%%%%%%%%%%%%%%%%%%%%%%%%%%%%%%%%%%%%%%%%%%%%%%%%%%%%%%%

\section{An HDG method}\label{S:HDG}
By introducing $\bm{r}=\bm\nabla\times\bm{u}$, we can rewrite \eqref{source} as: Find $(\bm{r},\bm{u},p)$ such that
\begin{subequations}\label{mixed}
\begin{align}
	\bm{r}-\bm\nabla\times\bm{u}&=\bm{0}&\text{ in }\Omega,\\
	\bm\nabla\times\bm{r}-{k}^2\bm{u}+({k}^2+1)\nabla p&=\bm{f}& \text{ in }\Omega, \label{mix0source}\\
	\nabla\cdot\bm{u}&=0&  \text{ in }\Omega,  \\
	\bm{n}\times \bm{u} &=\bm{0}&\text{ on }\Gamma,\\
	p &=0& \text{ on } \Gamma.
\end{align}
\end{subequations}

Let $\mathcal{T}_h=\bigcup\{T\}$  be a shape-regular partition of the domain $\Omega$ consisting of simplexes. For any $T\in\mathcal{T}_h$, let $h_T$ be the infimum of the diameters of spheres containing $T$ and denote the mesh size $h:=\max_{T\in\mathcal{T}_h}h_T$. Let $\mathcal{F}_h=\bigcup\{F\}$ be the union of all faces of $T\in\mathcal{T}_h$, and $\mathcal{F}_h^I$ and $\mathcal{F}_h^B$ be the set of interior faces and boundary faces, respectively. We denote by $h_F$ the smallest diameter of all circles containing face $F$.
Moreover, we define the mesh-size function $\h$ as
\eq{\label{h}
\h(\bm x):=\begin{cases}
	h_F, &\text{for $\bm x$ {  on faces} }  F\in \mathcal F_h,\\
	h_T, &\text{for $\bm x$ in the interior of }  T\in \mathcal T_h.
	\end{cases}}
	For any  $T\in\mathcal{T}_h$, we denote by $ \bm{n}_T $ the unit outward  normal vector to $\partial T$. We extend the definition of $\bm n$ to the boundary of elements by letting $\bm n_{\partial T}=\bm n_T$. Note that $\bm n$ is double valued on  interior faces with opposite directions. For   an interior face $F=\partial T \cap \partial T'\in \mathcal{F}_h^I$ shared by an element $T$ and an element
	$T'$, and a  piecewise  function $\bm{\phi}$,  we
	define the jump of $\bm{\phi}$ on $F$ as
	\begin{align*}
[\![ \bm{\phi}]\!]|_F:=\bm{\phi}|_T-\bm{\phi}|_{T'}.
\end{align*}
On a boundary face $F\subset{  T}\cap \partial\Omega$, we set $[\![ \bm{\phi}]\!]|_F:=\bm{\phi}$.
For $u,v\in L^2(\partial\mathcal{T}_h)$, we define the inner product and norm as
\begin{align*}
\langle u,v \rangle_{\partial\mathcal{T}_h}&=\sum_{T\in\mathcal{T}_h}\langle u,v\rangle_{\partial T},
&
\|v\|^2_{0,\partial\mathcal{T}_h}=&\sum_{T\in\mathcal{T}_h}\|v\|^2_{0,\partial T}.
\end{align*}
Discrete curl, divergent and gradient operators with respect to
mesh partition $\mathcal{T}_h$ are donated by ${\bm\nabla_h\times}$, $\nabla_h\cdot$ and $\nabla_h$, respectively.

For an integer $\ell\ge 0$, $\mathbb{P}_{\ell}(\Lambda)$  denotes the set of all polynomials defined on $\Lambda$ with degree no greater than $\ell$. For any integer ${\ell}\ge 1$ and $m\in\{{\ell}-1,{\ell}\}$, we introduce  the following  finite dimensional spaces:
\begin{align*}
\bm{R}_{h}&:=\{\bm s_h\in [L^2(\Omega)]^3:\;\bm s_h {|_T}\in [\mathbb {P}_m(T)]^3,\;\forall\, T\in\mathcal{T}_h\}, \\
\bm{U}_{h}&:=\{\bm v_h\in [L^2(\Omega)]^3:\;\bm v_h|_T\in [\mathbb {P}_{\ell}(T)]^3,\;\forall\, T\in\mathcal{T}_h\}, \\
\widehat{\bm{U}}_{h}&:=\{\widehat{\bm{v}}_h\in [L^2(\mathcal{F}_h)]^3:\;\widehat{\bm v}_h|_ {F}\in[\mathbb P_{\ell}(F)]^3,\;\forall F\in\mathcal{F}_h,\; \widehat{\bm{v}}_h\cdot\bm{n}|_{\mathcal{F}_h}=0,\;
\bm{n}\times\widehat{\bm{v}}_h|_{\Gamma}=\bm 0  \} ,\\
{M_h}&:=\{ q_h\in L^2(\Omega):\;q_h|_T\in \mathbb P_{\ell+1}(T),\;\forall\, T\in\mathcal{T}_h\},\\
{\widehat{M}_{h}}&:=\{\widehat{q}_h\in L^2(\mathcal{F}_h):\;\widehat{q}_h|_F\in \mathbb P_{\ell+1}(F),\;\forall F\in\mathcal{F}_h,\;\widehat{q}_h|_{\Gamma}=0\}.
\end{align*}

The HDG method for \eqref{source} reads: Find an approximation $(\bm r_h,\bm u_h,$ $\widehat{\bm u}_h,p_h,\widehat p_h) \in \bm{R}_h\times\bm{U}_h\times\widehat{\bm{U}}_h\times {M}_h\times \widehat{{M}}_h$ such that
\begin{subequations}\label{oror}
\begin{align}
	(\bm r_h,\bm s_h)-(\bm u_h,\bm\nabla_h\times\bm s_h)-\langle
	\bm n\times\widehat{\bm u}_h,\bm s_h \rangle_{\partial\mathcal{T}_h}&=0, \label{orora}\\
	(\bm r_h,\bm\nabla_h\times\bm v_h)+\langle\widehat{\bm n\times \bm r_h},\bm v_h \rangle_{\partial\mathcal{T}_h}
	-(k^2+1)(p_h,\nabla_h\cdot\bm v_h)
	&\label{ororb}\\
	+(k^2+1)\langle\widehat p_h,\bm n\cdot\bm v_h \rangle_{\partial\mathcal{T}_h}-k^2(\bm u_h,\bm v_h)&=(\bm f,\bm v_h),\nonumber \\
	-(k^2+1)(\bm u_h, \nabla_h q_h)
	+(k^2+1)\langle\widehat{\bm n\cdot \bm u_h},q_h \rangle_{\partial\mathcal{T}_h}&=0, \label{ororc}\\
	\langle\widehat{\bm n\times \bm r_h},\widehat{\bm v}_h \rangle_{\partial\mathcal{T}_h}&=0, \label{orord}\\
	\langle\widehat{\bm n\cdot \bm u_h},\widehat{q}_h \rangle_{\partial\mathcal{T}_h}&=0 \label{orore}
\end{align}
for any $(\bm s_h,\bm v_h,$ $\widehat{\bm v}_h,q_h,\widehat q_h) \in \bm{R}_h\times\bm{U}_h\times\widehat{\bm{U}}_h\times {M}_h\times \widehat{{M}}_h$,
where the numerical fluxes are defined as
\end{subequations}
\begin{subequations}
\begin{align}
	\widehat{\bm n\times\bm r_h}&=\bm n\times\bm r_h+ {\h^{-1}\bm n\times(\bm u_h-\widehat{\bm u}_h)\times \bm n, \quad\text{ on } \partial T, \;\forall~T \in \mathcal T_h}, \label{f1}\\
	\widehat{\bm n\cdot\bm u_h}&=\bm n\cdot\bm u_h+ {\h^{-1}(p_h-\widehat p_h), \quad \text{ on } \partial T,\; \forall~T \in \mathcal T_h}.\label{f2}
\end{align}
\end{subequations}

\begin{remark} The HDG method presented above is different from the method in \cite{MR2822937} in two aspects: the stabilization parameters  in \cite{MR2822937} are $O(1)$ and the stabilization parameters here are $O(\h^{-1})$;
the scheme in \cite{MR2822937} used $\ell^{\rm th}$ polynomials for all variables and   our method allows $({\ell}-1)^{\rm th}$ polynomials for the approximation of $\bm r$ and  $({\ell}+1)^{\rm th}$ polynomials for the approximation of $p$.	
\end{remark}

Applying \eqref{f1}--\eqref{f2} and \eqref{orord}--\eqref{orore} to eliminate $\widehat{\bm n\times\bm r_h}$ and $\widehat{\bm n\cdot\bm u_h}$ in \eqref{orora}--\eqref{ororc}  and  performing integration by parts, we get the following saddle point system:
\par\smallskip\noindent
Find $(\bm{r}_h,\bm{u}_h,\widehat{\bm{u}}_h,p_h,\widehat{p}_h) \in \bm{R}_h\times\bm{U}_h\times\widehat{\bm{U}}_h\times {M}_h\times \widehat{{M}}_h$ such that
\begin{subequations}\label{HDG-sep}
\begin{align}
	a_h(\bm{r}_h,\bm{s}_h)+
	b_h(\bm{u}_h,\widehat{\bm{u}}_h;\bm{s}_h)&=0,\label{fhm1}\\
	b_h(\bm{v}_h,\widehat{\bm{v}}_h;\bm{r}_h)+c_h(p_h,\widehat{p}_h;\bm{v}_h)
	-s^u_h(  \bm{u}_h,\widehat{\bm{u}}_h; \bm{v}_h,\widehat{\bm{v}}_h ) +k^2(\bm u_h,\bm v_h)&=-(\bm{f},\bm{v}_h),\label{fhm2}\\
	c_h(q_h,\widehat{q}_h;\bm{u}_h)+s^p_h(p_h,\widehat{p}_h; q_h,\widehat{q}_h)&=0\label{fhm3}
\end{align}
\end{subequations}
for any $(\bm{s}_h,\bm{v}_h,\widehat{\bm{v}}_h,q_h,\widehat{q}_h) \in \bm{R}_h\times\bm{U}_h\times\widehat{\bm{U}}_h\times {M}_h\times \widehat{{M}}_h$. Here the bilinear forms $a_h$, $b_h$,   $c_h$, $s_h^u$ and $s_h^p$ are defined by
\begin{align*}
a_h( {\bm{r},\bm{s}})&=(\bm{r},\bm{s}),\\ %\label{a_h}
b_h(\bm{u},\widehat{\bm{u}};\bm{s})&= -( \bm\nabla_h\times \bm{u},\bm{s})+\langle \bm{n}\times(\bm{u}-\widehat{\bm{u}}),\bm{s} \rangle_{\partial\mathcal{T}_h},\\
c_h(q,\widehat{q};v)&= {-({k}^2+1)(\bm{v}, \nabla_h q)+({k}^2+1)\langle \bm{n}\cdot\bm{v}, q-\widehat{q} \rangle_{\partial\mathcal{T}_h}},\\
s_h^u(  \bm{u},\widehat{\bm{u}}; \bm{v},\widehat{\bm{v}} )&=\langle  {\h}^{-1} \bm{n}\times(\bm{u}-\widehat{\bm{u}}), \bm{n}\times(\bm{v}-\widehat{\bm{v}})\rangle_{\partial\mathcal{T}_h},\\
s^p_h(p,\widehat{p}; q,\widehat{q})&=({k}^2+1)\langle  {\h}^{-1}(p-\widehat{p}),q-\widehat{q} \rangle_{\partial\mathcal{T}_h}.
\end{align*}
To simplify the notation,  we introduce the spaces
\eq{\label{Sgm}
\bm{\Sigma}:=&\prod_{T\in\mathcal{T}_h}\bm H^s(T)\times\prod_{T\in\mathcal{T}_h}\bm H^s(T)\cap \bm{H}({\rm curl},T)\cap \bm{H}({\rm div},T)\\
\notag &\times \prod_{F\in\mathcal{F}_h}\bm{L}^2(F)\times\prod_{T\in\mathcal{T}_h} H^1(T)\times\prod_{F\in\mathcal{F}_h} L^2(F),\\
\label{Sgm_h}\bm{\Sigma}_h :=& \bm{R}_h\times\bm{U}_h\times\widehat{\bm{U}}_h\times M_h\times \widehat{M}_h.
}
Clearly, $\bm{\Sigma}_h\subset\bm{\Sigma}$.
%Introduce the following bilinear forms on $\bm{\Sigma}\times\bm{\Sigma}$.
Given
\eqn{\bm{\sigma} =(\bm{r}, \bm{u}, \widehat{\bm{u}}, p, \widehat{p})\in \bm{\Sigma},
\qquad \bm{\tau} &:=(\bm{s}, \bm{v}, \widehat{\bm{v}}, q, \widehat{q })\; \in \bm{\Sigma},
}  We define the following bilinear forms on $\bm{\Sigma}\times\bm{\Sigma}$ as
\begin{align}\label{eq:DefBh}
\mathcal{B}_h^{\pm}(\bm{\sigma},\bm{\tau})
:=&a_h(\bm{r},\bm{s})+b_h(\bm{u},\widehat{\bm{u}};\bm{s})\notag\\
&+b_h(\bm{v},\widehat{\bm{v}};\bm{r})+c_h(p,\widehat{p};\bm{v})+c_h(q,\widehat{q};\bm{u}) {\mp} {k}^2(\bm u,\bm v) \nonumber\\
&-s^u_h(  \bm{u},\widehat{\bm{u}}; \bm{v},\widehat{\bm{v}} )+s^p_h(p,\widehat{p}; q,\widehat{q}) \notag\\
=&(\bm{r},\bm{s})-( \bm\nabla_h\times \bm{u},\bm{s})+\langle \bm{n}\times(\bm{u}-\widehat{\bm{u}}),\bm{s} \rangle_{\partial\mathcal{T}_h}\\
&-( \bm\nabla_h\times \bm{v},\bm{r})+\langle \bm{n}\times(\bm{v}-\widehat{\bm{v}}),\bm{r} \rangle_{\partial\mathcal{T}_h}\notag\\
&-({k}^2+1)\big((\bm{v}, \nabla_h p)-\langle \bm{n}\cdot\bm{v}, p-\widehat{p} \rangle_{\partial\mathcal{T}_h}\big) \nonumber\\
&-({k}^2+1)\big((\bm{u}, \nabla_h q)-\langle \bm{n}\cdot\bm{u}, q-\widehat{q} \rangle_{\partial\mathcal{T}_h}\big) {\mp} {k}^2(\bm u,\bm v)\notag\\
&-\langle  {\h}^{-1} \bm{n}\times(\bm{u}-\widehat{\bm{u}}), \bm{n}\times(\bm{v}-\widehat{\bm{v}})\rangle_{\partial\mathcal{T}_h}+({k}^2+1)\langle  {\h}^{-1}(p-\widehat{p}),q-\widehat{q} \rangle_{\partial\mathcal{T}_h}\notag,	
\end{align}
and the linear functional on $\bm{\Sigma}$
\begin{align}
\mathcal{F}_h(\bm{\tau}):=&-(\bm{f},\bm{v}).\label{eq:DefFh}
\end{align}
Therefore, the HDG method \eqref{HDG-sep} can be rewritten in the following compact form:
\par\smallskip\noindent
Find $\bm{\sigma}_h= {(\bm{r}_h ,\bm{u}_h ,\widehat{\bm{u}}_h ,p_h ,\widehat{p}_h )}\in \bm{\Sigma}_h$ such that
\begin{eqnarray}
\mathcal{B}_h^{ -}(\bm{\sigma}_h,\bm{\tau}_h)=\mathcal{F}_h(\bm{\tau}_h)&\qquad\forall \bm{\tau}_h\in\bm{\Sigma}_h.\label{Bh_HDG}
\end{eqnarray}

\vspace{0.05in}
It can be verified that the following orthogonality property holds for the HDG scheme \eqref{Bh_HDG}.
\begin{lemma} [Orthogonality] \label{lem:Orth}
Let $(\bm r,\bm u,p)$ and $\bm{\sigma}_h\in \bm{\Sigma}_h$ be the solutions of \eqref{mixed} and \eqref{Bh_HDG}, respectively. Then we have
\begin{eqnarray}\label{or}
	\mathcal{B}_h^{-}(\bm{\sigma}-\bm{\sigma}_h,\bm{\tau}_h)=0&\qquad\forall\bm{\tau}_h\in\bm{\Sigma}_h,
\end{eqnarray}
where $\bm{\sigma}=\big(\bm r,\bm u,\bm u|_{\mathcal{F}_h},p,p|_{\mathcal{F}_h}\big)$ and $|_{\mathcal{F}_h}$ denote the trace of the function on the union of faces in $\mathcal{F}_h$.
\end{lemma}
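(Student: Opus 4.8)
The plan is to show that the exact solution $(\bm r,\bm u,p)$ of \eqref{mixed}, when lifted to $\bm{\Sigma}$ as $\bm{\sigma}=\big(\bm r,\bm u,\bm u|_{\mathcal F_h},p,p|_{\mathcal F_h}\big)$, satisfies the same variational identity $\mathcal B_h^-(\bm\sigma,\bm\tau_h)=\mathcal F_h(\bm\tau_h)$ that defines $\bm\sigma_h$ in \eqref{Bh_HDG}. Subtracting the two identities and using bilinearity of $\mathcal B_h^-$ then yields \eqref{or} immediately. So the entire task reduces to verifying that the consistency identity holds for the exact solution.

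First I would fix an arbitrary test function $\bm\tau_h=(\bm s_h,\bm v_h,\widehat{\bm v}_h,q_h,\widehat q_h)\in\bm{\Sigma}_h$ and evaluate $\mathcal B_h^-(\bm\sigma,\bm\tau_h)$ term by term using the last (fully expanded) expression in \eqref{eq:DefBh}. The three stabilization-type terms collapse: since $\bm u\in\bm H(\mathrm{curl};\Omega)$ has single-valued tangential trace and $\bm n\times\bm u=\bm 0$ on $\Gamma$, while $\widehat{\bm u}=\bm u|_{\mathcal F_h}$, we get $\bm n\times(\bm u-\widehat{\bm u})=\bm 0$ on $\partial\mathcal T_h$; similarly $p-\widehat p=0$ because $p\in H^1_0(\Omega)$ is single-valued with zero trace on $\Gamma$. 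Hence both $s_h^u$ and $s_h^p$ contributions vanish, and the term $\langle\bm n\times(\bm u-\widehat{\bm u}),\bm r\rangle_{\partial\mathcal T_h}$ also vanishes. For the remaining volume and interface terms I would integrate by parts elementwise: $-(\nabla_h\times\bm v_h,\bm r)+\langle\bm n\times\bm v_h,\bm r\rangle_{\partial\mathcal T_h}$ rewrites, via $\int_T\bm r\cdot(\nabla\times\bm v_h)=\int_T(\nabla\times\bm r)\cdot\bm v_h+\int_{\partial T}(\bm n\times\bm v_h)\cdot\bm r$ — careful with sign conventions on the surface term — as $-(\nabla\times\bm r,\bm v_h)_{\mathcal T_h}$ plus a boundary term that telescopes; because $\bm r=\nabla\times\bm u\in\bm H(\mathrm{curl};\Omega)$ has single-valued tangential trace, the jump contributions on interior faces cancel, and the boundary faces drop out because $\widehat{\bm v}_h$ has zero tangential trace on $\Gamma$ (and the $\widehat{\bm v}_h$ terms were already accounted for through the $s_h^u$ and $b_h(\cdot;\bm r)$ cancellations). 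Similarly the $c_h$-type terms involving $(\bm u,\nabla_h q_h)$ and $\langle\bm n\cdot\bm u,q_h-\widehat q_h\rangle$ integrate by parts to $(k^2+1)(\nabla\cdot\bm u,q_h)_{\mathcal T_h}=0$ by \eqref{o2}, again with interface jumps of $\bm n\cdot\bm u$ cancelling since $\bm u\in\bm H(\mathrm{div};\Omega)$ (indeed $\bm u\in\bm H(\mathrm{div}^0;\Omega)$) and boundary terms vanishing since $\widehat q_h=0$ on $\Gamma$.

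After these reductions, $\mathcal B_h^-(\bm\sigma,\bm\tau_h)$ equals $(\bm r,\bm s_h)-(\nabla_h\times\bm u,\bm s_h)+\langle\bm n\times(\bm u-\widehat{\bm u}),\bm s_h\rangle_{\partial\mathcal T_h}$ (which is zero by $\bm r=\nabla\times\bm u$ and $\bm n\times(\bm u-\widehat{\bm u})=\bm 0$) plus $-(\nabla\times\bm r,\bm v_h)+(k^2+1)(\nabla p,\bm v_h)_{\mathcal T_h}-k^2(\bm u,\bm v_h)$; testing \eqref{mix0source} against $\bm v_h$ shows this last group equals $-(\bm f,\bm v_h)=\mathcal F_h(\bm\tau_h)$. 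That establishes the consistency identity, and subtracting \eqref{Bh_HDG} finishes the proof. The only real care needed — and the step I expect to be the main bookkeeping obstacle rather than a conceptual one — is tracking signs in the elementwise integration-by-parts for the curl terms and confirming that every interface jump genuinely cancels (using $\bm r,\bm u\in\bm H(\mathrm{curl};\Omega)$, $\bm u\in\bm H(\mathrm{div};\Omega)$, $p\in H^1(\Omega)$) and every $\Gamma$-boundary term is killed by the boundary constraints built into $\widehat{\bm U}_h$ and $\widehat M_h$; once the sign conventions in the definitions of $b_h$ and $c_h$ are pinned down this is routine.
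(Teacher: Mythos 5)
Your proposal is correct and is exactly the argument the paper intends (the paper itself only remarks that the identity follows ``by doing integration by parts,'' i.e., by verifying consistency of the exact solution with $\mathcal B_h^-$ and subtracting \eqref{Bh_HDG}). The only blemish is a sign transcription in your final collected $\bm v_h$-group --- from the definitions one gets $-(\nabla\times\bm r,\bm v_h)-(k^2+1)(\nabla p,\bm v_h)+k^2(\bm u,\bm v_h)$, which equals $-(\bm f,\bm v_h)$ by \eqref{mix0source} --- but this does not affect the validity of the approach.
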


We introduce the following mesh-dependent norms and seminorms:
\begin{subequations}
\begin{align}
	\|(\bm{v},\widehat{\bm{v}})\|^2_{\text{curl}}:=&\|\nabla_{h}\times\bm{v}\|^2_0+\| {\h}^{-1/2}\bm{n}\times(\bm{v}-\widehat{\bm{v}})\|^2_{0,\partial\mathcal{T}_h},\\
	\|(\bm{v},\widehat{\bm{v}})\|^2_{\text{div}}:=&\| {\h}\nabla_{h}\cdot\bm{v}\|^2_0+\|\h^{1/2}[\![\bm{n}\cdot\bm{v}]\!]\|^2_{0,\mathcal{F}^I_h},\\
	\|(\bm{v},\widehat{\bm{v}})\|^2_{U}:=& \|(\bm{v},\widehat{\bm{v}})\|^2_{\text{curl}}+({k}^2+1)\|(\bm{v},\widehat{\bm{v}})\|^2_{\text{div}},\\
	\|(q,\widehat{q})\|^2_{P}:=&({k}^2+1)\|\nabla_hq\|^2_0+({k}^2+1)\| \h^{-1/2}(q-\widehat{q})\|^2_{0,\partial\mathcal{T}_h}\label{def-p},\\
	\|\bm{\tau} \|^2_{\bm{\Sigma}_h}:=&\|\bm{s} \|_0^2+\|(\bm{v} ,\widehat{\bm{v} })\|^2_{U}+\|(q ,\widehat{q })\|^2_{P}+{k}^2\|\bm v\|_0^2\label{norm-lambda}\notag\\
	{=}&\|\bm{s} \|_0^2+\|\nabla_{h}\times\bm{v}\|_0^2+\|\h^{-1/2}\bm{n}\times(\bm{v}-\widehat{\bm{v}})\|^2_{0,\partial\mathcal{T}_h} \\
	&+(k^2+1)\Big(\|\h\nabla_{h}\cdot\bm{v}\|^2_0+\|\h^{1/2}[\![\bm{n}\cdot\bm{v}]\!]\|^2_{0,\mathcal{F}^I_h}\notag\\
	&+\|\nabla_hq\|^2_0+\| \h^{-1/2}(q-\widehat{q})\|^2_{0,\partial\mathcal{T}_h}\Big)+{k}^2\|\bm v\|_0^2,\notag
	%\|\bm{\tau} \|^2_{\bm{\Sigma}_h}&:=\|\bm{s} \|_0^2+\|(\bm{v} ,\widehat{\bm{v} })\|^2_{U}+\|(q ,\widehat{q })\|^2_{P}+{k}^2\|\bm v\|_0^2\label{norm-gamma},
\end{align}
\end{subequations}
where $\bm{\tau} =(\bm{s}, \bm{v}, \widehat{\bm{v}}, q, \widehat{q })$.
%

%%%%%%%%%%%%%%%%%%%%%%%%%%%%%%%%%%%%%%%%%%%%%%%%%%%
\section{Elliptic projection}
\label{section4}
In this section, we derive an error estimate of the following elliptic  projection based on the bilinear form $\mathcal{B}_h^+$. {This result will be used to analyze the HDG method \eqref{Bh_HDG}. Given
$\bm{\sigma}:=\big(\bm r,\bm u,\bm u|_{\mathcal{F}_h}, p,p|_{\mathcal{F}_h}\big)$,} find $ \bm{\MP}_h\bm{\sigma}\in \bm{\Sigma}_h$ such that
\eq{\label{EP}
\mathcal{B}_h^+( \bm{\MP}_h\bm{\sigma},\bm{\tau}_h)=\mathcal{B}_h^+(\bm{\sigma},\bm{\tau}_h)\quad\forall\bm{\tau}_h\in\bm{\Sigma}_h.
}

\subsection{ {Approximation errors}}
We first {consider the approximation properties of the discrete space $\Sigma_h$. For any integer $j\ge 0$, denote by $\mathbb{P}_j(\mathcal T_h):=\prod_{T\in\mathcal T_h}\mathbb{P}_j(T)$ and by $\mathbb{P}_j(\mathcal F_h):=\prod_{F\in\mathcal F_h}\mathbb{P}_j(F)$. Let $\Pi^{o}_j: L^2(\Omega)\rightarrow \mathbb{P}_{j}(\mathcal T_h)$ and $\Pi^{\partial}_j: L^2(\mathcal F_h)\rightarrow \mathbb{P}_{j}(\mathcal F_h)$ be the usual $L^2$ projection operators. The following stability {  results} and error estimates hold.
\begin{lemma}[{\cite[Lemma 2.7]{MR4092295}}]\label{lemma2.5}
	For any $T\in\mathcal{T}_h$ and $F\in\mathcal{F}_h$ and $j\geq 0$, it holds
	\begin{align*}
		&\|v-\Pi^{o}_jv\|_{0,T}\lesssim  h_T^{s}|v|_{s,T},&\forall \, v\in H^{s}(T),\\
		&\|v-\Pi^{o}_jv\|_{0,\partial T}\lesssim  h_T^{s-1/2}|v|_{s,T},&\forall \,v\in H^{s}(T),\\
		&\|v-\Pi^{\partial}_jv\|_{0,\partial T}\lesssim  h_T^{s-1/2}|v|_{s,T},&\forall \,v\in H^{s}(T),\\
		&\|\Pi^{o}_jv\|_{0,T}\le\|v\|_{0,T},&\forall \, v\in L^{2}(T),\\
		&\|\Pi^{\partial}_jv\|_{0,F}\le\|v\|_{0,F},&\forall  \,v\in L^{2}(F),
	\end{align*}
	where $s\in (1/2, j+1]$.
\end{lemma}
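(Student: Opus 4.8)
The plan is to reduce all five estimates to a fixed reference configuration and invoke the (fractional) Bramble--Hilbert/Deny--Lions lemma, handling them in three groups. The two stability bounds $\|\Pi^{o}_jv\|_{0,T}\le\|v\|_{0,T}$ and $\|\Pi^{\partial}_jv\|_{0,F}\le\|v\|_{0,F}$ are immediate, since $\Pi^{o}_j$ and $\Pi^{\partial}_j$ are orthogonal projections in $L^2(T)$ and $L^2(F)$ respectively, hence non-expansive; nothing further is required.

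For the interior estimate $\|v-\Pi^{o}_jv\|_{0,T}\le Ch_T^s|v|_{s,T}$, I would first note that $\Pi^{o}_j$ reproduces $\mathbb P_j(T)\supseteq\mathbb P_{\lceil s\rceil-1}(T)$ (this is where $s\le j+1$ enters) and is $L^2(T)$-stable, so for every $q\in\mathbb P_{\lceil s\rceil-1}(T)$,
\[
\|v-\Pi^{o}_jv\|_{0,T}=\|(v-q)-\Pi^{o}_j(v-q)\|_{0,T}\le 2\|v-q\|_{0,T}.
\]
Taking the infimum over such $q$ and applying the fractional Bramble--Hilbert lemma on a shape-regular element --- equivalently, a scaling argument to a reference simplex combined with the Deny--Lions inequality, using that a shape-regular polyhedron is star-shaped with respect to a ball of radius $\simeq h_T$ --- gives $\inf_q\|v-q\|_{0,T}\le Ch_T^s|v|_{s,T}$, with $C$ depending only on the shape-regularity constant and on $j$. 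This yields the first inequality.

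For the two boundary estimates I would run the same argument with the left-hand norm replaced by $\|\cdot\|_{0,\partial T}$ (resp. $\|\cdot\|_{0,F}$), using on the reference configuration the continuous trace embedding $H^s(\widehat T)\hookrightarrow L^2(\partial\widehat T)$ (valid because $s>1/2$) together with the equivalence of all norms on the finite-dimensional space $\mathbb P_j(\widehat T)$. Concretely, $\|v-\Pi^{o}_jv\|_{0,\partial T}\le\|v-q\|_{0,\partial T}+\|\Pi^{o}_j(v-q)\|_{0,\partial T}$, where the first term is controlled via a scaled trace inequality by $\|v-q\|_{H^s(T)}$ and the second by $Ch_T^{-1/2}\|\Pi^{o}_j(v-q)\|_{0,T}\le Ch_T^{-1/2}\|v-q\|_{0,T}$; minimizing over $q\in\mathbb P_{\lceil s\rceil-1}(T)$ and using Bramble--Hilbert as before produces the factor $h_T^{s-1/2}$. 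The estimate for $\Pi^{\partial}_j$ is identical, now with $\Pi^{\partial}_j$ acting on the single face $F$ and the scaled trace inequality on $T$ used only to pass from $|v|_{s,T}$ to the $L^2(F)$-approximation error.

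The only point requiring mild care is the \emph{uniform} treatment of the fractional range $s\in(1/2,j+1]$ on general polytopal elements, which is why I would use the star-shaped (Dupont--Scott) form of the Bramble--Hilbert lemma rather than a single affine pullback; the remaining work --- scaling of Sobolev seminorms and bookkeeping of shape-regularity constants --- is routine. Since none of this is new, I would state it concisely, cite standard references for the fractional Bramble--Hilbert lemma and the trace inequality, and record the three-line computation above.
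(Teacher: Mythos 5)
Your argument is correct and is precisely the standard proof that the paper itself omits (the lemma is stated with the remark that these estimates are ``standard'' and no proof is given): non-expansiveness of $L^2$-orthogonal projections for the stability bounds, polynomial reproduction plus the fractional Deny--Lions/Bramble--Hilbert lemma on star-shaped elements for the interior estimate, and a scaled trace inequality combined with a discrete trace (inverse) inequality for the two boundary estimates. Your attention to the star-shapedness issue for general shape-regular polyhedra and to the fractional range $s\in(1/2,j+1]$ is exactly the right care; the only cosmetic remark is that the best-approximation property of the orthogonal projection gives $\|v-\Pi^{o}_jv\|_{0,T}\le\|v-q\|_{0,T}$ directly, so the factor $2$ is unnecessary.
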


Next, we recall the error estimate results for the interpolation operator  $\bm{\mathcal{P}}^{\rm curl}_{{\ell}}: \bm{H}^s(\text{curl};\mathcal T_h)$ $\to [\mathbb{P}_{{\ell}}(\mathcal T_h)]^3$ for the N\'ed\'elec element of second type.

\begin{lemma} [cf. \cite{MR864305,MR1609607,MR2059447,MR2194528}]\label{lemma4.2}
	There hold for $t\in (1/2, {\ell}]$ and $t^*\in (1, {\ell+1}]$,
	\begin{subequations}
		\begin{align}
			\label{curl-est0}
			{\|\bm{v}-\bm{\mathcal{P}}^{\rm curl}_{{\ell}}\bm{v}\|_{0,T}}&\lesssim h_T^{t^*}\|\bm{v}\|_{t^*},\\	
			\label{curl-est}
			\|\bm{v}-\bm{\mathcal{P}}^{\rm curl}_{{\ell}}\bm{v}\|_{0,T}&\lesssim  h_T^t\left(  {\|\bm{v}\|_{t,T}+h_T\|\bm\nabla\times\bm{v}\|_{t,T}}\right),\\
			\label{curl-est2}
			\|\bm\nabla\times(\bm{v}-\bm{\mathcal{P}}^{\rm curl}_{{\ell}}\bm{v})\|_{0,T}&\lesssim  h_T^t {\|\bm\nabla\times\bm{v}\|_{t,T}}.
		\end{align}
	\end{subequations}	%
\end{lemma}

We  next recall two lemmas which present two interpolation operators of Osward type \cite{oswald93}.  The first one indicates that every discontinuous piecewise polynomial in  $M_h$ has a good $H^1$-conforming approximation (see, e.g., \cite{oswald93,be07,Chen-2018-01,MR2300291,zhu2013preasymptotic}). {The second one says that every discontinuous piecewise polynomial in  $[\mathbb P_{\ell}(\mathcal{T}_h)]^3$ has a good $\bm H({\rm curl})$-conforming approximation.}
\begin{lemma}\label{Plc}  There exists an interpolation operator  {${\Pi_{\ell+1}^{\rm c}}: M_h\to M_h\cap H^1_0(\Omega)$} such that
	\begin{align*}
		\|\nabla_h q_h-\nabla {\Pi_{\ell+1}^{\rm c}} q_h\|_0\lesssim  \|\h^{-1/2}[\![q_h]\!]\|_{\mathcal{F}_h}\quad \forall q_h\in M_h.%\label{ick}
	\end{align*}
	%	where ${\rm c}$ in $\mathcal I_{\ell}^{\rm c}$ stands for $H^1$-conforming.
\end{lemma}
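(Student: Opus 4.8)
The plan is to construct $\Pi_\ell^{\rm c} q_h$ by a local averaging (Oswald-type) procedure. Recall that on each element $T$ the function $q_h$ lives in $\mathbb P_\ell(T)$ and may jump across faces. A natural choice of degrees of freedom for $M_h\cap H^1_0(\Omega)$ is a Lagrange-type nodal basis on a chosen set of nodes $\mathcal N$ (vertices, edge nodes, face nodes, interior nodes) associated with the mesh, with the boundary nodes removed to enforce the homogeneous Dirichlet condition. For each interior node $x\in\mathcal N$ define $\Pi_\ell^{\rm c} q_h$ at $x$ to be the average of the values $q_h|_T(x)$ over all elements $T$ containing $x$ (for boundary nodes set the value to $0$). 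This produces a well-defined element of $M_h\cap H^1_0(\Omega)$. The core of the argument is then a purely local, scaling-based estimate bounding the difference $q_h-\Pi_\ell^{\rm c} q_h$ in terms of the jumps of $q_h$ across faces.

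The key steps I would carry out are as follows. First, fix an element $T$ and note that $(q_h-\Pi_\ell^{\rm c} q_h)|_T$ is a polynomial of degree $\ell$, so by norm equivalence on the reference element and scaling,
\begin{align*}
\|\nabla_h q_h-\nabla\Pi_\ell^{\rm c} q_h\|_{0,T}^2\le C h_T^{-2}\|q_h-\Pi_\ell^{\rm c} q_h\|_{0,T}^2\le C h_T^{d-2}\sum_{x\in\mathcal N\cap \bar T}\bigl|q_h|_T(x)-(\Pi_\ell^{\rm c} q_h)(x)\bigr|^2,
\end{align*}
where $d=3$. Second, for an interior node $x$ shared by elements $T,T'$ (connected through a chain of faces inside the patch $\omega_x$ of elements containing $x$), the quantity $q_h|_T(x)-(\Pi_\ell^{\rm c} q_h)(x)$ is a convex combination of differences $q_h|_T(x)-q_h|_{T''}(x)$ over $T''\in\omega_x$; each such difference can be estimated by a telescoping sum of jumps $[\![q_h]\!]$ evaluated at $x$ across the faces of the chain, using shape-regularity to bound the chain length and the number of elements in a patch by a constant. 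For boundary nodes one uses that $q_h|_{\mathcal F_h^B}$ need not vanish but the jump convention $[\![q_h]\!]|_F=q_h$ on boundary faces supplies the missing control. Third, convert the nodal jump values back to $L^2$-norms of $[\![q_h]\!]$ over the faces: again by norm equivalence and scaling on each face $F$,
\begin{align*}
\sum_{x\in\mathcal N\cap \bar F}\bigl|[\![q_h]\!](x)\bigr|^2\le C h_F^{1-d}\|[\![q_h]\!]\|_{0,F}^2.
\end{align*}
Fourth, sum over all elements $T$ and all faces $F$, using shape-regularity (so $h_T\simeq h_F$ for faces $F\subset\partial T$ and the overlap of patches is uniformly bounded) to obtain $\|\nabla_h q_h-\nabla\Pi_\ell^{\rm c} q_h\|_0^2\le C\sum_{F}h_F^{-1}\|[\![q_h]\!]\|_{0,F}^2=C\|\h^{-1/2}[\![q_h]\!]\|_{\mathcal F_h}^2$, which is the claimed bound.

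I expect the main obstacle to be the bookkeeping in the second and fourth steps: carefully choosing the averaging nodes so that the resulting function genuinely lies in $M_h\cap H^1_0(\Omega)$ (continuity across faces and vanishing on $\Gamma$), correctly handling nodes lying on $\Gamma$ so that only interior jumps and the boundary-face jump convention enter, and tracking that all constants depend only on the shape-regularity parameter, the polynomial degree $\ell$, and $d$, not on $h$. The scaling estimates themselves are routine; the delicate part is the combinatorial argument bounding nodal differences by telescoping chains of face jumps within a patch and then re-summing without losing powers of $h$. Since this is a by-now classical construction, I would either cite the references \cite{oswald93,be07,MR2300291,zhu2013preasymptotic,Chen-2018-01} for the details or present the scaling argument in compressed form.
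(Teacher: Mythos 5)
Your construction is the classical Oswald nodal-averaging argument with the standard inverse/scaling estimates, and this is exactly the route the paper relies on: it offers no proof of this lemma and simply cites \cite{oswald93,be07,MR2300291,zhu2013preasymptotic,Chen-2018-01}, where this averaging proof (including the treatment of boundary nodes via the convention $[\![q_h]\!]|_F=q_h$ on $\Gamma$ and the scaling $h_T^{d-2}\cdot h_F^{1-d}\simeq h_F^{-1}$ that yields the $\h^{-1/2}$ weight) is carried out. Your proposal is correct and matches that approach; the only implicit assumption worth flagging is that the averaging uses a Lagrange nodal basis of $M_h\cap H^1_0(\Omega)$, i.e.\ a simplicial (or comparable) mesh, which is also what the cited references assume.
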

Note that the supscript $^{\rm c}$  stands for ``conforming''. 
\begin{lemma}[cf. {\cite[Proposition 4.5]{MR2194528}}]\label{pic}
	There is an interpolation $\bm{\Pi}_h^{\rm curl,c}$ from $[\mathbb P_{\ell}(\mathcal{T}_h)]^3$
	to $
	[\mathbb P_{\ell}(\mathcal{T}_h)]^3\cap \bm H_0({\rm curl};\Omega)
	$ such that for all $\bm v_h\in [\mathbb P_{\ell}(\mathcal{T}_h)]^3$, we have the following approximation properties
	\begin{align*}
		\|\bm{\Pi}_h^{\rm curl,c}\bm v_h-\bm v_h\|_0&\lesssim
		\| {\h}^{1/2}\bm n\times[\![\bm v_h]\!]\|_{0,\mathcal{F}_h},\\
		\|\bm\nabla\times\bm{\Pi}_h^{\rm curl,c}\bm v_h- \bm\nabla_h\times \bm v_h\|_0&\lesssim
		\| {\h}^{-1/2}\bm n\times[\![
		\bm v_h
		]\!]\|_{0,\mathcal{F}_h}.
	\end{align*}
	% The ${\rm c}$ in $\bm{\Pi}_h^{\rm curl,c}$ stands for $\bm H({\rm curl})$-conforming.
\end{lemma}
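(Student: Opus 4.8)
The plan is to realise $\bm\Pi_h^{\rm curl,c}$ as an averaging (Oswald-type) operator built from the degrees of freedom (DOFs) of the second-kind N\'ed\'elec element of degree $\ell$, in exact analogy with the scalar operator $\Pi_\ell^{\rm c}$ of Lemma \ref{Plc}. Recall that $[\mathcal P_\ell(\mathcal T_h)]^3\cap\bm H_0({\rm curl};\Omega)$ is precisely the second-kind N\'ed\'elec space of degree $\ell$ with vanishing tangential trace on $\Gamma$; its global DOFs are attached to the edges, faces, and interiors of $\mathcal T_h$, and the tangential trace of such a field on a face $F$ is completely determined by the DOFs carried by $F$ and by the edges of $F$. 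Given $\bm v_h\in[\mathcal P_\ell(\mathcal T_h)]^3$, on each element $T$ it possesses a full set of local DOF values; I define $\bm\Pi_h^{\rm curl,c}\bm v_h$ by assigning to each global DOF the arithmetic mean of the local values taken over all elements sharing the underlying geometric entity, with the exception that every tangential DOF lying on $\Gamma$ is set to zero and every interior DOF is left unchanged.

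Conformity is then immediate: after averaging, the edge and face DOFs are single-valued, so the tangential trace of $\bm\Pi_h^{\rm curl,c}\bm v_h$ matches across every interior face and vanishes on $\Gamma$; hence $\bm\Pi_h^{\rm curl,c}\bm v_h\in[\mathcal P_\ell(\mathcal T_h)]^3\cap\bm H_0({\rm curl};\Omega)$, as required.

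For the approximation bounds, fix $T$ and set $\bm e_T:=(\bm v_h-\bm\Pi_h^{\rm curl,c}\bm v_h)|_T$, a $\mathcal P_\ell$ field on $T$. Its local DOFs vanish on interior DOFs; on a $\Gamma$-tangential DOF $\phi$ the value is $\phi(\bm v_h|_T)=\phi\big(\bm n\times[\![\bm v_h]\!]\times\bm n\big)$, the exterior contribution being zero; and on a shared face or edge DOF the value is the difference between $\phi(\bm v_h|_T)$ and the corresponding average, which can be written as a finite combination of tangential jumps $\phi\big(\bm n\times[\![\bm v_h]\!]\times\bm n\big)$ over faces in the local patch of $T$ — directly for face DOFs (one shared face), and by telescoping around the edge for edge DOFs, the number of terms being bounded by shape regularity. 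Passing to a reference configuration, using the equivalence of norms on the finite-dimensional N\'ed\'elec space, the scaling of the DOF functionals and of the $L^2(F)$-norms under the shape-regular map, and summing the patch contributions, I obtain
\[
\|\bm e_T\|_{0,T}^2\;\lesssim\;\sum_{\substack{F\in\mathcal F_h\\ \overline F\cap\overline T\neq\emptyset}} h_F\,\|\bm n\times[\![\bm v_h]\!]\|_{0,F}^2 .
\]
Summing over $T\in\mathcal T_h$ and invoking the finite overlap of the patches gives the first stated estimate $\|\bm\Pi_h^{\rm curl,c}\bm v_h-\bm v_h\|_0\lesssim\|\h^{1/2}\bm n\times[\![\bm v_h]\!]\|_{0,\mathcal F_h}$. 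For the second estimate, apply the inverse inequality $\|\nabla\times\bm e_T\|_{0,T}\lesssim h_T^{-1}\|\bm e_T\|_{0,T}$ element by element, which reproduces the same right-hand side with an extra factor $h^{-1}$, i.e. $\|\h^{-1/2}\bm n\times[\![\bm v_h]\!]\|_{0,\mathcal F_h}$.

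I expect the main obstacle to be the scaling/telescoping step: one must verify that every averaged-DOF difference is controlled by face tangential jumps with exactly the right power of $\h$, keeping track of the fact that an edge is shared by boundedly-many (but more than two) elements, so the estimate lives on a local patch rather than on a single face, and that the DOF functionals scale consistently with the $L^2(F)$-norms under the reference map. On simplicial meshes this bookkeeping is classical, which is essentially \cite[Proposition 4.5]{MR2194528}; for the shape-regular partitions considered here one additionally needs a fixed choice of second-kind N\'ed\'elec element and its DOFs on each admissible element shape, after which the same argument applies verbatim.
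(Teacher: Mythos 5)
The paper gives no proof of this lemma; it is quoted verbatim from \cite[Proposition 4.5]{MR2194528}, where it is established for simplicial meshes by precisely the Oswald-type averaging of second-kind N\'ed\'elec degrees of freedom that you describe, so your reconstruction (average the shared edge/face DOFs, zero the tangential DOFs on $\Gamma$, bound the resulting DOF differences by tangential face jumps through a patchwise scaling and telescoping argument, and get the curl bound from an inverse inequality) is the standard argument and is correct. Your closing caveat is the only real point of friction: the paper's $\mathcal T_h$ is allowed to consist of arbitrary shape-regular polyhedra, for which the cited simplicial result does not literally apply, so one must either read the analysis on simplicial meshes (as the numerical experiments do) or fix a N\'ed\'elec-type element with its DOFs on each admissible cell shape before the averaging construction makes sense.
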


The following lemma shows that every discrete function in $\bm H_0({\rm curl};\Omega)\cap \bm U_h$ has a discrete Helmholtz decomposition  and the discrete divergence free part in the decomposition has a good ``continuous'' approximation.  (see, e.g., \cite[Theorem 4.1 and Lemma 4.5]{MR2009375}, \cite[\S7.2.1]{MR2059447})
\begin{lemma} \label{Hip}
	For any $\bm w_h\in \bm H_0({\rm curl};\Omega)\cap \bm U_h$, there exist $\bm{z}_h\in \bm H_0({\rm curl};\Omega)\cap \bm U_h$ and $\xi_h\in H_0^1(\Omega)\cap \mathbb P_{\ell+1}(\mathcal T_h)$  such that
	\eq{\label{DHD}
		\bm{w}_h=\bm{z}_h+\nabla \xi_h, \quad (\bm{z}_h,\nabla q_h)=0\quad\forall q_h\in H_0^1(\Omega)\cap \mathbb P_{\ell+1}(\mathcal T_h).
	}
	Moreover, there exists  $\bm\Theta \in \bm H_0({\rm curl};\Omega)\cap \bm H({\rm div^0 };\Omega)$  such that
	\begin{align}\label{TA}
		\bm\nabla\times \bm\Theta =\bm\nabla\times \bm w_h,\; \|\bm\Theta \|_{s}\lesssim  \|\bm\nabla\times \bm w_h\|_0,\;\text{and }
		\|\bm z_h-\bm\Theta \| _0\lesssim
		h^{s}\|\bm\nabla\times \bm w_h\|_0,
	\end{align}
	where $s\in (1/2,1]$ is the regularity constant from \Cref{reg} which is dependent on $\Omega$.
\end{lemma}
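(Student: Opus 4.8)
The plan is to prove \Cref{Hip} in two stages: first establish the discrete Helmholtz decomposition \eqref{DHD}, then construct the continuous auxiliary field $\bm\Theta$ and prove the estimates \eqref{TA}. For the first stage, I would define $\xi_h\in H^1_0(\Omega)\cap M_h$ as the Galerkin solution of $(\nabla\xi_h,\nabla q_h)=(\bm w_h,\nabla q_h)$ for all $q_h\in H^1_0(\Omega)\cap M_h$; this is well-posed by Lax--Milgram on the finite-dimensional space. Setting $\bm z_h:=\bm w_h-\nabla\xi_h$ gives $(\bm z_h,\nabla q_h)=0$ for all such $q_h$ by construction, and $\bm z_h\in\bm U_h$ since $\nabla\xi_h\in\bm U_h$ (gradients of $M_h$-functions are piecewise polynomials of degree $\ell-1\le\ell$); moreover $\bm z_h\in\bm H_0({\rm curl};\Omega)$ because $\bm w_h\in\bm H_0({\rm curl};\Omega)$ and $\nabla\xi_h$ is a gradient of an $H^1_0$ function, hence tangentially continuous with vanishing tangential trace. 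This yields \eqref{DHD}. Note also $\nabla\times\bm z_h=\nabla\times\bm w_h$.

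For the second stage I would invoke the continuous regular decomposition / vector potential theory: since $\nabla\times\bm z_h\in\bm H({\rm div}^0;\Omega)$ with vanishing normal trace on $\Gamma$ (being a curl), there exists $\bm\Theta\in\bm X_{N,0}=\bm H_0({\rm curl};\Omega)\cap\bm H({\rm div}^0;\Omega)$ with $\nabla\times\bm\Theta=\nabla\times\bm z_h=\nabla\times\bm w_h$ and $\|\bm\Theta\|_0\le C\|\nabla\times\bm w_h\|_0$ (this is essentially solving a Maxwell-type problem; one can take $\bm\Theta=\bm K_0$-type solution or simply use \Cref{conceive} with $\Gamma=\Gamma_s$ on the curl-free-complement-orthogonal component). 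The $\bm H^s$-regularity $\|\bm\Theta\|_s\le C\|\nabla\times\bm\Theta\|_0=C\|\nabla\times\bm w_h\|_0$ then follows from the embedding \Cref{embed} applied to $\bm\Theta\in\bm X_N$, together with $\|\nabla\times\bm\Theta\|_0=\|\nabla\times\bm w_h\|_0$ and the already-established $\|\bm\Theta\|_0$ bound.

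The key remaining estimate is $\|\bm z_h-\bm\Theta\|_0\lesssim h^s\|\nabla\times\bm w_h\|_0$. The idea is that $\bm z_h-\bm\Theta$ is "almost" discrete divergence free and "almost" curl free: $\nabla\times(\bm z_h-\bm\Theta)=0$, so $\bm z_h-\bm\Theta=\nabla\phi$ for some $\phi\in H^1_0(\Omega)$ (using that $\Omega$ is simply connected and both fields have vanishing tangential trace). Then $\|\bm z_h-\bm\Theta\|_0^2=(\bm z_h-\bm\Theta,\nabla\phi)=(\bm z_h,\nabla\phi)-(\bm\Theta,\nabla\phi)=(\bm z_h,\nabla\phi)$ since $\bm\Theta$ is divergence free with zero normal trace. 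Now $(\bm z_h,\nabla\phi)=(\bm z_h,\nabla\phi-\nabla\Pi^{\rm c}_\ell I_h\phi)$ using the discrete orthogonality $(\bm z_h,\nabla q_h)=0$, where I interpolate $\phi$ into $M_h\cap H^1_0$; the interpolation error in $\bm L^2$ of $\nabla\phi$ against a suitable projection is $O(h^{s})$ times an $H^{1+s}$-type seminorm of $\phi$, but since $\phi$ has only $H^1$ regularity a priori, one instead uses a duality/Aubin--Nitsche argument together with the elliptic regularity of the Laplacian (\Cref{embed} or the elliptic regularity in \cite{MR961439} giving $\phi\in H^{1+s}$ since $\Delta\phi=\nabla\cdot(\bm z_h-\bm\Theta)$... which is zero, so actually $\phi$ is harmonic — here one must be more careful and estimate $\|\nabla\phi\|_0$ directly via $\|\bm z_h-\bm\Theta\|_0$ and regularity of a dual problem). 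The cleanest route: let $\bm\psi$ solve a dual Maxwell problem with data $\bm z_h-\bm\Theta$, use its $\bm H^s$-regularity, subtract the $\bm\Pi^{\rm curl,c}_\ell$ or $\bm{\mathcal P}^{\rm curl}_\ell$ interpolant, and exploit the fact that $\bm z_h$ is the discrete divergence-free part so that the curl-curl pairing closes up with a factor $h^s$; the commuting-diagram property of the second-kind Nédélec interpolant plus \Cref{lemma4.2} delivers the power $h^s$. I expect \textbf{this last duality estimate to be the main obstacle}: one must carefully choose the dual problem so that the only surviving term is an interpolation error of order $h^s$, controlling the constant uniformly and using the regularity shift from \Cref{embed}; getting the orthogonalities ($\bm z_h$ discrete div-free, $\bm\Theta$ div-free, $\nabla\times(\bm z_h-\bm\Theta)=0$) to align with the available interpolation operators is the delicate bookkeeping.
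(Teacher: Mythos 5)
The paper offers no proof of \Cref{Hip} to compare against: it is quoted from the literature (\cite[Theorem 4.1 and Lemma 4.5]{MR2009375}, \cite[\S 7.2.1]{MR2059447}). Your first two stages are sound and standard: the discrete Poisson problem defining $\xi_h$, the observation that $\nabla\xi_h\in \bm H_0({\rm curl};\Omega)\cap\bm U_h$ so that $\bm z_h:=\bm w_h-\nabla\xi_h$ satisfies \eqref{DHD} with $\nabla\times\bm z_h=\nabla\times\bm w_h$, and the construction of $\bm\Theta$ as the divergence-free part of the continuous Helmholtz decomposition of $\bm z_h$, with $\|\bm\Theta\|_0\lesssim\|\nabla\times\bm\Theta\|_0$ from \Cref{conceive} and $\|\bm\Theta\|_s\lesssim\|\nabla\times\bm w_h\|_0$ from \Cref{embed}.

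The gap is in the one estimate that carries all the weight, $\|\bm z_h-\bm\Theta\|_0\lesssim h^s\|\nabla\times\bm w_h\|_0$, and your own text concedes it is unresolved. Writing $\bm z_h-\bm\Theta=\nabla\phi$ with $\phi\in H^1_0(\Omega)$ and using the two orthogonalities only yields $\|\nabla\phi\|_0^2=(\nabla\phi,\nabla(\phi-q_h))$, i.e.\ $\|\nabla\phi\|_0\le\|\nabla(\phi-q_h)\|_0$ for every $q_h\in M_h\cap H^1_0(\Omega)$, which is vacuous: $\phi$ has no regularity beyond $H^1$ (since $\nabla\cdot\bm z_h$ is only discretely zero, $\Delta\phi$ contains jump distributions on interelement faces, so $\phi$ is neither harmonic nor in $H^{1+s}$), and the ``dual Maxwell problem'' you invoke is never specified. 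The mechanism that actually produces the factor $h^s$ is to approximate the \emph{smooth} object $\bm\Theta$ rather than the rough object $\phi$: because $\bm\Theta\in \bm H^s(\Omega)$ with $s>1/2$ and $\nabla\times\bm\Theta=\nabla\times\bm z_h$ is piecewise polynomial, the interpolant $\bm{\mathcal P}^{\rm curl}_{\ell}\bm\Theta$ is well defined and the commuting-diagram property gives $\nabla\times\bm{\mathcal P}^{\rm curl}_{\ell}\bm\Theta=\nabla\times\bm z_h$; hence $\bm z_h-\bm{\mathcal P}^{\rm curl}_{\ell}\bm\Theta$ is a curl-free discrete field, thus a discrete gradient $\nabla\chi_h$, and the two $L^2$-orthogonalities yield $\|\bm z_h-\bm{\mathcal P}^{\rm curl}_{\ell}\bm\Theta\|_0^2=(\bm\Theta-\bm{\mathcal P}^{\rm curl}_{\ell}\bm\Theta,\nabla\chi_h)\le\|\bm\Theta-\bm{\mathcal P}^{\rm curl}_{\ell}\bm\Theta\|_0\|\bm z_h-\bm{\mathcal P}^{\rm curl}_{\ell}\bm\Theta\|_0$, after which a refined version of \Cref{lemma4.2} (valid when the curl is piecewise polynomial) closes the bound with no duality argument at all. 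Even then one must verify $(\bm z_h,\nabla\chi_h)=0$: for the second-kind space $\bm H_0({\rm curl};\Omega)\cap\bm U_h$ the curl-free kernel consists of gradients of continuous $\mathbb P_{\ell+1}$ functions, whereas your orthogonality is only against gradients of $M_h$ (degree $\ell$), so the polynomial degrees must be matched; this is precisely the ``delicate bookkeeping'' your proposal defers, and as written it does not go through.
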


Now we consider the approximation properties of the discrete space $\bm{\Sigma}_h$.  Given sufficiently smooth $\bm r,\bm u,$ and $p$ satisfying the boundary conditions $\bm{u}\times\bm n=0$ and $p=0$ on $\Gamma$, let $\bm{\sigma}:=\big(\bm r,\bm u,\bm u|_{\mathcal{F}_h}, p,p|_{\mathcal{F}_h}\big)$ and define its approximation in $\bm{\Sigma}_h$ by
%\cc{the above boundary conditions are needed for $\bm{\mathcal I}_h\bm{\sigma}\in\bm{\Sigma_h}$ }
%
%\cc{Answer: It is true!}
\begin{align*}
	\bm{\mathcal I}_h\bm{\sigma}:=\big(\bm{\Pi}^o_{m}\bm{r},\bm{\mathcal{P}}^{\rm curl}_{\ell}\bm{u},  {\bm n\times\bm{\mathcal{P}}^{\rm curl}_{\ell}\bm{u}\times\bm n|_{\mathcal{F}_h}},\Pi^{o}_{\ell+1}p,\Pi^{\partial}_{\ell+1} {(p|_{\mathcal{F}_h})}\big).
\end{align*}
The following lemma gives the error estimate of $ \bm{\mathcal I}_h$ in the norm $\|\cdot\|_{\bm{\Sigma}_h}$.
\begin{lemma} \label{l53}
	Assume that $(\bm{r} =\bm\nabla\times\bm u,\bm{u},p)\in [H^t(\Omega)]^3\times  \big([H^t(\Omega)]^3\cap\bm{H}_0({\rm curl};\Omega)\big) \times \big(H^{t+1}(\Omega)\cap H_0^1(\Omega)\big)$ with $t\in (1/2,\ell]$ and that $\nabla\cdot\bm{u}=0$. Then there holds
	\begin{align*}
		\|\bm{\sigma}-\bm{\mathcal I}_h\bm{\sigma}\|_{\bm{\Sigma}_h}\lesssim  h^{t}
		{\big(
			(1+kh)
			{\|\bm{r}\|_t}+({k}+1){\|\bm{u}\|_t}+({k}+1){\|p\|_{t+1}}\big).}
	\end{align*}
\end{lemma}

\begin{proof}  From the definition \eqref{norm-lambda} of the norm $\|\cdot\|_{\bm{\Sigma}_h}$ we have
	\eq{\label{I0}
		\|\bm{\sigma}-\bm{\mathcal I}_h\bm{\sigma}\|_{\bm{\Sigma}_h}^2=
		&\Big(\|\bm{r}-\bm{\Pi}^o_{m}\bm{r}\|_0^2+\|\nabla_{h}\times(\bm{u}-\bm{\mathcal{P}}^{\rm curl}_{\ell}\bm{u})\|_0^2+{k}^2\|(\bm{u}-\bm{\mathcal{P}}^{\rm curl}_{\ell}\bm{u})\|_0^2\Big)\notag\\
		&+(k^2+1)\Big(\|\h\nabla_{h}\cdot(\bm{u}-\bm{\mathcal{P}}^{\rm curl}_{\ell}\bm{u})\|^2_0+\|\h^{1/2}[\![\bm{n}\cdot(\bm{u}-\bm{\mathcal{P}}^{\rm curl}_{\ell}\bm{u})]\!]\|^2_{0,\mathcal{F}^I_h}\Big)\notag\\
		&+(k^2+1)\Big(\|\nabla_h(p-\Pi^{o}_{\ell+1}p)\|^2_0+\| \h^{-1/2}(\Pi^{o}_{\ell+1}p-\Pi^{\partial}_{\ell+1}p)\|^2_{0,\partial\mathcal{T}_h}\Big)\notag\\
		=:&I^2+II^2+III^2.
	}
	We will make the estimation for the three terms $I, II, III$, respectively. Firstly, from \Cref{lemma2.5} and \Cref{lemma4.2}, we have
	\eq{\label{I1}
		I\lesssim h^{t}{\|\bm{r}\|_t}+(1+kh)h^t{\|\bm\nabla\times\bm{u}\|_t}+kh^t{\|\bm{u}\|_t}.
	}
	Secondly, noting that $\nabla\cdot\bm{u}=0$, from the trace inequality, the inverse inequality,  \Cref{lemma2.5}, and \Cref{lemma4.2}, we conclude that
	\begin{align}\label{I2}
		\begin{split}
			II \lesssim &(k+1)\Big(\|\h\nabla\cdot(\bm{u}-\bm{\Pi}^{\rm div}_{ \ell}\bm{u})\|_0+\|\h^{1/2}[\![\bm{n}\cdot(\bm{u}-\bm{\Pi}^o_{ \ell}\bm{u})]\!]\|_{0,\mathcal{F}^I_h}\\
			&+\|\h\nabla_{h}\cdot(\bm{\Pi}^{\rm div}_{ \ell}\bm{u}-\bm{\mathcal{P}}^{\rm curl}_{\ell}\bm{u})\|_0+\|\h^{1/2}[\![\bm{n}\cdot(\bm{\Pi}^o_{ \ell}\bm{u}-\bm{\mathcal{P}}^{\rm curl}_{\ell}\bm{u})]\!]\|_{0,\mathcal{F}^I_h}\Big)\\
			\lesssim&(k+1)\big(h^t{\|\bm{u}\|_t}
			+\|\bm{\Pi}^{\rm div}_{ \ell}\bm{u}-\bm{\mathcal{P}}^{\rm curl}_{\ell}\bm{u}\|_0+\|\bm{\Pi}^o_{ \ell}\bm{u}-\bm{\mathcal{P}}^{\rm curl}_{\ell}\bm{u}\|_0
			\big)\\%\cc{\text{requires } t\ge 1 \text{ if } \nabla\cdot \bm{u}\neq 0. \text{Answer: Yes, it is true!}}
			\lesssim&(k+1)\big(h^t{\|\bm{u}\|_t}+h^{t+1}{\|\bm\nabla\times\bm{u}\|_t}\big),
		\end{split}
	\end{align}
	where $\bm{\Pi}_{\ell}^{\rm div}$ is the $\bm H({\rm div};\Omega)$-conforming interpolation operator onto $\bm H({\rm div};\Omega)\cap \mathbb P_{\ell}(\mathcal T_h)$, and we have used $\nabla\cdot\bm{\Pi}_{\ell}^{\rm div}\bm u=0$ (see e.g. \cite{MR2059447}) to derive the penultimate inequality.
	Thirdly, it follows from   \Cref{lemma2.5} that
	\eq{\label{I3}
		III\lesssim (k+1)h^t{\|p\|_{t+1}}. %\cc{\text{ $\Pi_{\ell}p$ suffices for this estimate?}}
		%		\\
		%		\cb{\text{Answer: we need $p_h$ be $\ell +1$th polynomials}  \nonumber}
	}
	Then the proof of the theorem follows by plugging \eqref{I1}--\eqref{I3} into \eqref{I0}.
\end{proof}

The following lemma shows that   $\bm{\sigma}-\bm{\mathcal I}_h\bm{\sigma}$ satisfies an approximate Galerkin orthogonality with respect to the bilinear form $\mathcal{B}_h^{+}$.
\begin{lemma}\label{lemma47}
	Assume that  $(\bm{r} =\bm\nabla\times\bm u,\bm{u},p)\in [H^t(\Omega)]^3\times  \big([H^t(\Omega)]^3\cap\bm{H}_0({\rm curl};\Omega)\big) \times \big(H^{t+1}(\Omega)\cap H_0^1(\Omega)\big)$ with $t\in (1/2,\ell]$ and that $\nabla\cdot\bm{u}=0$. Let $\bm{\sigma}=\big(\bm r,\bm u,\bm u|_{\mathcal{F}_h}, p,p|_{\mathcal{F}_h}\big)$. Then
	\begin{align}%\label{eq:BhmInt}
		|\mathcal{B}_h^{+}(\bm{\sigma}-\bm{\mathcal I}_h\bm{\sigma},\bm{\tau})|\lesssim   h^{t}\big(
		(1+kh){\|\bm{r}\|_t}+({k}+1){\|\bm{u}\|_t}
		+({k}+1){\|p\|_{t+1}}\big) \|\bm{\tau}\|_{\bm{\Sigma}_h} \qquad \forall \bm{\tau}\in\bm{\Sigma}.\notag
	\end{align}
\end{lemma}

\begin{proof}
	For any  $\bm{\tau}=(\bm s,\bm v,\widehat{\bm v},q,\widehat q)\in\bm{\Sigma}$, it follows from the definition of $\mathcal{B}_h^+$ in \eqref{eq:DefBh},  integration by parts, and the identity $\langle \bm{n}\cdot\bm{v}, p-\Pi^{\partial}_{\ell+1}p \rangle_{\partial\mathcal{T}_h}=\langle [\![\bm{n}\cdot\bm{v}]\!], p-\Pi^{\partial}_{\ell+1}p \rangle_{\mathcal{F}_h^I}$  that
	\begin{align*}
		\mathcal{B}_h^{+}(\bm{\sigma}-\bm{\mathcal I}_h\bm{\sigma},\bm{\tau})
		=&(\bm{r}-\bm{\Pi}^o_m\bm{r},\bm{s})-( \bm\nabla_h\times (\bm{u}-\bm{\mathcal{P}}_{\ell}^{\rm curl}\bm{u}),\bm{s})- {k}^2(\bm u-\bm{\mathcal{P}}_{\ell}^{\rm curl}\bm{u},\bm v)\\
		&-( \bm\nabla_h\times \bm{v},\bm{r}-\bm{\Pi}^o_m\bm{r})+\langle \bm{n}\times(\bm{v}-\widehat{\bm{v}}),\bm{r}-\bm{\Pi}^o_m\bm{r} \rangle_{\partial\mathcal{T}_h}\notag\\
		&+({k}^2+1)\big((\nabla_h\cdot\bm{v}, p-\Pi^{o}_{\ell+1}p )-\langle [\![\bm{n}\cdot\bm{v}]\!], p-\Pi^{\partial}_{\ell+1}p \rangle_{\mathcal{F}_h^I}\big) \nonumber\\
		&-({k}^2+1)\big((\bm{u}-\bm{\mathcal{P}}_{\ell}^{\rm curl}\bm{u}, \nabla_h q)-\langle \bm{n}\cdot(\bm{u}-\bm{\mathcal{P}}_{\ell}^{\rm curl}\bm{u}), q-\widehat{q}\rangle_{\partial\mathcal{T}_h}\big) \notag\\
		&+({k}^2+1)\langle \h^{-1}(-\Pi^{o}_{\ell+1}p+\Pi^{\partial}_{\ell+1}p),q-\widehat{q} \rangle_{\partial\mathcal{T}_h}\notag.	
	\end{align*}
	Therefore, from the Cauchy-Schwarz inequality,  \eqref{I0}, and \eqref{norm-lambda} , we conclude that
	\eqn{
		&|\mathcal{B}_h^{+}(\bm{\sigma}-\bm{\mathcal I}_h\bm{\sigma},\bm{\tau})|\lesssim
		\big(\|\bm{\sigma}-\bm{\mathcal I}_h\bm{\sigma}\|_{\bm{\Sigma}_h}+\|\h^\frac12( \bm{r}-\bm{\Pi}^o_m\bm{r})\|_{0,\partial\mathcal{T}_h}+\|(\bm{u}-\bm{\mathcal{P}}^{\rm curl}_{\ell}\bm{u})\|_0\\
		&+(k^2+1)\big(\|\h^{1/2}\bm{n}\cdot(\bm{u}-\bm{\mathcal{P}}^{\rm curl}_{\ell}\bm{u})\|^2_{0,\Gamma}+\|\h^{-1}(p-\Pi^{o}_{\ell+1}p)\|_0+\|\h^{-\frac12}(p-\Pi^{o}_{\ell+1}p)\|_{0,\partial\mathcal{T}_h}\big)\big)\|\bm{\tau}\|_{\bm{\Sigma}_h}\cb{,}
	}
	which together with \Cref{l53,lemma2.5,lemma4.2} completes the proof of the lemma.
\end{proof}
%

%%%%%%%%%%%%%%%%%%%%%%%%%%%%%%%%%%%%%%%%%%%%%%%%%%%
\subsection{Discrete inf-sup condition}  In this subsection we show that $\mathcal{B}_h^+$ satisfies a discrete inf-sup condition.
\begin{theorem} [Discrete inf-sup condition]\label{Th45}
	For all $\bm\sigma_h\in\bm{\Sigma}_h$, the bilinear form $\mathcal{B}^+_h(\cdot,\cdot)$ defined in \eqref{eq:DefBh} satisfies
	\begin{align}\label{inf-sup1}
		\sup_{\bm 0\neq \bm{\tau}_h\in\bm{\Sigma}_h}\frac{\mathcal{B}^+_h(\bm{\sigma}_h,\bm{\tau}_h)}{\|\bm{\tau}_h\|_{\bm{\Sigma}_h}}
		=\sup_{\bm 0\neq \bm{\tau}_h\in\bm{\Sigma}_h}\frac{\mathcal{B}^+_h(\bm{\tau}_h,\bm{\sigma}_h)}{\|\bm{\tau}_h\|_{\bm{\Sigma}_h}}\ge \beta  \|\bm{\sigma}_h\|_{\bm{\Sigma}_h},
	\end{align}
	where $\beta$ is a constant independent of $k$ and $h$.
\end{theorem}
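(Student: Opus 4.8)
The plan is to prove \eqref{inf-sup1} directly: for each $\bm{\sigma}_h=(\bm{r}_h,\bm{u}_h,\widehat{\bm{u}}_h,p_h,\widehat{p}_h)\in\bm{\Sigma}_h$ I will exhibit a test function $\bm{\tau}_h\in\bm{\Sigma}_h$ with $\|\bm{\tau}_h\|_{\bm{\Sigma}_h}\lesssim\|\bm{\sigma}_h\|_{\bm{\Sigma}_h}$ and $\mathcal{B}_h^{+}(\bm{\sigma}_h,\bm{\tau}_h)\gtrsim\|\bm{\sigma}_h\|_{\bm{\Sigma}_h}^2$, all constants independent of $k$ and $h$; the equality of the two suprema then follows immediately since $\mathcal{B}_h^{+}(\cdot,\cdot)$ is manifestly symmetric from \eqref{eq:DefBh}. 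I would build $\bm{\tau}_h$ additively, $\bm{\tau}_h=\bm{\tau}_h^{(0)}+\eta_1\bm{\tau}_h^{(1)}+\eta_2\bm{\tau}_h^{(2)}+\eta_3\bm{\tau}_h^{(3)}$, with small positive parameters $\eta_i$ fixed at the end; because $\mathcal{B}_h^{+}(\bm{\sigma}_h,\cdot)$ is linear, each summand is analysed against the full $\bm{\sigma}_h$ with no cross terms, and each is designed to recover one group of $\|\bm{\sigma}_h\|_{\bm{\Sigma}_h}^2$.

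The backbone is $\bm{\tau}_h^{(0)}:=(\bm{r}_h,-\bm{u}_h,-\widehat{\bm{u}}_h,p_h,\widehat{p}_h)$. A short computation from \eqref{eq:DefBh} shows that the sign flip in the $(\bm u,\widehat{\bm u})$-slot cancels all the $b_h$- and $c_h$-cross terms and flips the stabilization and $k^2$-terms to the good sign, giving
\begin{align*}
\mathcal{B}_h^{+}(\bm{\sigma}_h,\bm{\tau}_h^{(0)})=\|\bm{r}_h\|_0^2+k^2\|\bm{u}_h\|_0^2+\|\h^{-1/2}\bm{n}\times(\bm{u}_h-\widehat{\bm{u}}_h)\|_{0,\partial\mathcal{T}_h}^2+(k^2+1)\|\h^{-1/2}(p_h-\widehat{p}_h)\|_{0,\partial\mathcal{T}_h}^2.
\end{align*}
Comparing with \eqref{norm-lambda}, this secures four of the groups in $\|\bm{\sigma}_h\|_{\bm{\Sigma}_h}^2$; it remains to recover $\|\nabla_h\times\bm{u}_h\|_0^2$, the group $(k^2+1)(\|\h\nabla_h\cdot\bm{u}_h\|_0^2+\|\h^{1/2}[\![\bm{n}\cdot\bm{u}_h]\!]\|_{0,\mathcal{F}_h^I}^2)$, and $(k^2+1)\|\nabla_h p_h\|_0^2$. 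For the first, take $\bm{\tau}_h^{(1)}:=(-\nabla_h\times\bm{u}_h,\bm{0},\bm{0},0,0)$, which is admissible since $\nabla_h\times\bm{u}_h\in[\mathbb{P}_{\ell-1}(\mathcal{T}_h)]^3\subset\bm{R}_h$; then $\mathcal{B}_h^{+}(\bm{\sigma}_h,\bm{\tau}_h^{(1)})=\|\nabla_h\times\bm{u}_h\|_0^2-(\bm{r}_h,\nabla_h\times\bm{u}_h)-\langle\bm{n}\times(\bm{u}_h-\widehat{\bm{u}}_h),\nabla_h\times\bm{u}_h\rangle_{\partial\mathcal{T}_h}$, and the two sign-indefinite terms are absorbed, via Young's inequality and the inverse trace inequality, into $\|\bm{r}_h\|_0^2$, the curl-jump group, and a small fraction of $\|\nabla_h\times\bm{u}_h\|_0^2$. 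For the divergence group I exploit that $c_h(q,\widehat q;\bm u_h)=(k^2+1)((\nabla_h\cdot\bm u_h,q)-\langle[\![\bm n\cdot\bm u_h]\!],\widehat q\rangle_{\mathcal{F}_h^I})$ after integration by parts, and set $\bm{\tau}_h^{(2)}:=(\bm{0},\bm{0},\bm{0},\h^2\nabla_h\cdot\bm{u}_h,-\h[\![\bm{n}\cdot\bm{u}_h]\!])$ (last entry on $\mathcal{F}_h^I$, zero on $\Gamma$); this produces exactly $(k^2+1)(\|\h\nabla_h\cdot\bm{u}_h\|_0^2+\|\h^{1/2}[\![\bm{n}\cdot\bm{u}_h]\!]\|_{0,\mathcal{F}_h^I}^2)$ plus an $s_h^p$-term bounded, after inverse estimates, by $\varepsilon$ times that group plus $C_\varepsilon(k^2+1)\|\h^{-1/2}(p_h-\widehat p_h)\|_{0,\partial\mathcal{T}_h}^2$.

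The delicate block is $(k^2+1)\|\nabla_h p_h\|_0^2$, and this is the step I expect to be the main obstacle: the naive choice $\bm v=-\nabla_h p_h$ in the $\bm v$-slot fails, since its tangential jumps on $\mathcal{F}_h$ couple to $\bm{r}_h$ through $b_h$ with a negative power of $\h$ that cannot be absorbed by $\|\bm{r}_h\|_0^2$. The remedy is to replace $\nabla_h p_h$ by the gradient of the conforming interpolant of \Cref{Plc}: set $\widetilde p_h:=\Pi_\ell^{\rm c}p_h\in M_h\cap H_0^1(\Omega)$ and $\bm{\tau}_h^{(3)}:=(\bm{0},-\nabla\widetilde p_h,-(\nabla\widetilde p_h)_{\rm tan}|_{\mathcal{F}_h},0,0)$, where $(\nabla\widetilde p_h)_{\rm tan}=\nabla\widetilde p_h-(\bm n\cdot\nabla\widetilde p_h)\bm n=\bm n\times\nabla\widetilde p_h\times\bm n$ is the surface gradient of $\widetilde p_h$ (single-valued on each face, and vanishing on $\Gamma$ because $\widetilde p_h|_\Gamma=0$, so $\bm{\tau}_h^{(3)}\in\bm{\Sigma}_h$). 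Since $\nabla_h\times(\nabla\widetilde p_h)=\bm 0$ elementwise and $\bm{n}\times(\bm{v}-\widehat{\bm{v}})=\bm 0$ on $\partial\mathcal{T}_h$, both the $b_h(\bm{v},\widehat{\bm{v}};\bm{r}_h)$ and the $s_h^u$ contributions vanish, leaving $\mathcal{B}_h^{+}(\bm{\sigma}_h,\bm{\tau}_h^{(3)})=c_h(p_h,\widehat p_h;-\nabla\widetilde p_h)+k^2(\bm{u}_h,\nabla\widetilde p_h)$; writing $\nabla_h p_h=\nabla\widetilde p_h+(\nabla_h p_h-\nabla\widetilde p_h)$ and using $\|\nabla_h p_h-\nabla\widetilde p_h\|_0\lesssim\|\h^{-1/2}[\![p_h]\!]\|_{\mathcal{F}_h}\lesssim\|\h^{-1/2}(p_h-\widehat p_h)\|_{0,\partial\mathcal{T}_h}$ from \Cref{Plc}, a Cauchy--Schwarz/Young argument (with the inverse trace inequality for the piecewise polynomial $\nabla\widetilde p_h$) gives $\mathcal{B}_h^{+}(\bm{\sigma}_h,\bm{\tau}_h^{(3)})\ge(1-O(\varepsilon))\tfrac12(k^2+1)\|\nabla_h p_h\|_0^2-C_\varepsilon((k^2+1)\|\h^{-1/2}(p_h-\widehat p_h)\|_{0,\partial\mathcal{T}_h}^2+k^2\|\bm{u}_h\|_0^2)$, i.e.\ its error is confined to groups already controlled by $\bm{\tau}_h^{(0)}$.

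To close, I would verify $\|\bm{\tau}_h^{(i)}\|_{\bm{\Sigma}_h}\lesssim\|\bm{\sigma}_h\|_{\bm{\Sigma}_h}$ for each $i$ ($\|\bm{\tau}_h^{(0)}\|_{\bm{\Sigma}_h}=\|\bm{\sigma}_h\|_{\bm{\Sigma}_h}$ and $\|\bm{\tau}_h^{(1)}\|_{\bm{\Sigma}_h}=\|\nabla_h\times\bm{u}_h\|_0$ trivially; $i=2,3$ by inverse inequalities, and \Cref{Plc} again for $i=3$), then fix $\varepsilon$ small and afterwards $\eta_3,\eta_2,\eta_1$ small in that order so that every negative contribution is dominated by the definite terms already in hand. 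Summing the four pieces yields $\mathcal{B}_h^{+}(\bm{\sigma}_h,\bm{\tau}_h)\gtrsim\|\bm{\sigma}_h\|_{\bm{\Sigma}_h}^2$ and $\|\bm{\tau}_h\|_{\bm{\Sigma}_h}\lesssim\|\bm{\sigma}_h\|_{\bm{\Sigma}_h}$, with $k$- and $h$-independent constants since every $(k^2+1)$- and $k^2$-weight is tracked explicitly through the estimates; this is \eqref{inf-sup1}. The only genuinely delicate point, as noted, is extracting $(k^2+1)\|\nabla_h p_h\|_0^2$ without letting an inverse power of $\h$ hit $\bm{r}_h$ — which is exactly what the conforming-gradient test direction $\bm{\tau}_h^{(3)}$ achieves by annihilating the offending $b_h$ and $s_h^u$ couplings.
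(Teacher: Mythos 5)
Your proposal is correct and follows essentially the same route as the paper: the same four test directions (the sign-flipped copy of $\bm{\sigma}_h$, the elementwise curl $-\nabla_h\times\bm{u}_h$ in the $\bm r$-slot, the weighted divergence/normal-jump pair in the $(p,\widehat p)$-slots, and the gradient of the Oswald-type conforming interpolant $\Pi_\ell^{\rm c}p_h$ from \Cref{Plc} in the $(\bm v,\widehat{\bm v})$-slots), combined with Young's inequality and inverse estimates exactly as in Steps one through five of the paper's proof. The only cosmetic difference is that you damp the auxiliary directions by small $\eta_i$ while the paper amplifies the backbone by a large $\alpha$, which is the same argument up to rescaling.
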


\begin{proof}
	The proof consists of five steps.
	\vspace{0.1in}
	
	\textbf{Step one:}

	Let $\bm{\tau}^1_h=(\bm{r}_h ,-\bm{u}_h ,-\widehat{\bm{u}}_h ,p_h ,\widehat{p}_h )\in\bm{\Sigma}_h$. By \eqref{norm-lambda} and \eqref{eq:DefBh}, we have
	\begin{equation}\label{tau1}
		\|\bm{\tau}_h^1\|_{\bm{\Sigma}_h}=\|\bm{\sigma}_h\|_{\bm{\Sigma}_h},
	\end{equation}
	and
	\begin{align}\label{B1}
		\mathcal{B}_h^+(\bm{\sigma}_h,\bm{\tau}^1_h)&=\|\bm{r}_h\|^2_0+\|\h^{-1/2}\bm{n}\times(\bm{u}_h-\widehat{\bm{u}}_h)\|^2_{0,\partial\mathcal{T}_h}\nonumber\\
		&\quad+ (k^2+1)\|\h^{-1/2}(p_h-\widehat{p}_h)\|^2_{0,\partial\mathcal{T}_h}+{k}^2\|\bm u_h\|_0^2.
	\end{align}
	\vspace{0.05in}

	\textbf{Step two:}
	
	Let $\bm{\tau}^2_h=(- \bm\nabla_h\times \bm{u}_h,\bm{0},\bm{0},0,0) \in\bm{\Sigma}_h$. By \eqref{norm-lambda} and \eqref{eq:DefBh},  we have
	\begin{align}\label{tau2}
		\|\bm{\tau}_h^2\|_{\bm{\Sigma}_h} =\| \bm\nabla_h\times \bm{u}_h\|_0\le\|\bm{\sigma}_h\|_{\bm{\Sigma}_h},
	\end{align}
	and
	\begin{align}\label{B2}
		\begin{split}
			\mathcal{B}_h^+(\bm{\sigma}_h,\bm{\tau}^2_h)&=-(\bm{r}_h, \bm\nabla_h\times \bm{u}_h)+\| \bm\nabla_h\times \bm{u}_h\|^2_0-\langle \bm\nabla_h\times \bm{u}_h,\bm{n}\times(\bm{u}_h- \widehat{\bm{u}}_h  )  \rangle_{\partial\mathcal{T}_h}\\
			&\ge \frac{1}{2}\| \bm\nabla_h\times \bm{u}_h\|^2_0-C_1\|\bm{r}_h\|^2_0-C_2 \|\h^{-1/2}\bm{n}\times(\bm{u}_h-\widehat{\bm{u}}_h)\|^2_{0,\partial\mathcal{T}_h},
		\end{split}
	\end{align}
	where we have used the inverse trace inequality $\|\h^\frac12 \bm\nabla_h\times \bm{u}_h\|_{0,\partial\mathcal{T}_h}\lesssim \| \bm\nabla_h\times \bm{u}_h\|_0$ and the Young's inequality to derive the last inequality.
	\vspace{0.05in}

	\textbf{Step three:}
	
	Let $\bm{\tau}^3_h=(\bm{0},\bm{0},\bm{0},w_h,\widehat{w}_h)\in\bm{\Sigma}_h$, where $w_h= {\h}^{2}\nabla_h\cdot\bm{u}_h$ on $\mathcal{T}_h$ and let $\widehat{w}_h=- {\h}[\![\bm{n}\cdot \bm{u}_h]\!]$ on $\mathcal{F}_h^I$ and $\widehat{w}_h=0$ on $\Gamma$.   By \eqref{norm-lambda} and the inverse inequality, we have
	\begin{align}\label{tau3}
		\begin{split}
			\|\bm{\tau}^3_h\|_{\bm{\Sigma}_h}^2
			&= (k^2+1)\big(\|\nabla_h w_h\|^2_{0}+\|\h^{-1/2}(w_h-\widehat{w}_h)\|^2_{\partial\mathcal{T}_h}\big)\\
			&\lesssim   (k^2+1)\| \h\nabla_h\cdot\bm{u}_h\|^2_0+ (k^2+1)\|\h^{1/2}[\![\bm{n}\cdot\bm{u}_h]\!]\|^2_{0,\mathcal{F}^I_h}\\
			&\lesssim\|\bm{\sigma}_h\|_{\bm{\Sigma}_h}^2.
		\end{split}
	\end{align}
	By  \eqref{eq:DefBh} and $-(\bm{u_h}, \nabla_h w_h)+\langle \bm{n}\cdot\bm{u}_h, w_h-\widehat{w}_h \rangle_{\partial\mathcal{T}_h}=(\nabla_h \cdot \bm{u}_h, w_h)-\langle \bm{n}\cdot\bm{u}_h, \widehat{w}_h \rangle_{\partial\mathcal{T}_h}$,  we have
	\begin{align}\label{B3}
		\begin{split}
			\mathcal{B}^+_h(\bm{\sigma}_h,\bm{\tau}^3_h)& =  (k^2+1)\Big(\|\h\nabla\cdot\bm{u}_h\|^2_0+ \| \h^{1/2}[\![\bm{n}\cdot\bm{u}_h]\!]\|^2_{0,\mathcal{F}_h^I}\Big)\\
			&\quad+(k^2+1) \langle \h^{-1} (p_h-\widehat{p}_h), w_h-\widehat{w}_h\rangle_{\partial\mathcal{T}_h} \\
			&\ge\frac{k^2+1}{2}\Big(\|\h\nabla\cdot\bm{u}_h\|^2_0+ \| \h^{1/2}[\![\bm{n}\cdot\bm{u}_h]\!]\|^2_{0,\mathcal{F}_h^I}\Big) \\
			&\quad-C_3 (k^2+1)\|\h^{-1/2}(p_h-\widehat{p}_h )\|^2_{0,\mathcal{F}_h}.
		\end{split}
	\end{align}
	\vspace{0.05in}
	
	\textbf{Step four:}
	
	Let $\bm{\tau}^4_h=(\bm{0},-\nabla{\Pi_{\ell+1}^{\rm c}} p_h,-\bm{n}\times\nabla{\Pi_{\ell+1}^{\rm c}} p_h\times\bm{n},0,0)\in\bm{\Sigma}_h$. By \eqref{norm-lambda}, the inverse inequality,  {and \Cref{Plc}}, we have
	\begin{align}\label{tau4}
		\begin{split}
			\|\bm{\tau}^4_h\|_{\bm{\Sigma}_h}^2
			&=  {(k^2+1)}\big(\|\h\nabla_h\cdot \nabla{\Pi_{\ell+1}^{\rm c}} p_h\|^2_0+ \| {\h^{1/2}}[\![\nabla{\Pi_{\ell+1}^{\rm c}} p_h\cdot\bm{n}]\!]\|_{0,\mathcal{F}^I_h}\big)
			+{k}^2\| \nabla{\Pi_{\ell+1}^{\rm c}} p_h\|_0^2
			\\
			&\lesssim  ({k}^2+1)\|\nabla{\Pi_{\ell+1}^{\rm c}} p_h\|^2_0
			{\lesssim  ({k}^2+1)\big(\|\nabla_h  p_h\|^2_0+ \|\h^{-1/2}[\![p_h]\!]\|_{\mathcal{F}_h}\big)}\\
			&=    ({k}^2+1)\big(\|\nabla_h  p_h\|^2_0+ \|\h^{-1/2}[\![p_h-\widehat{p}_h]\!]\|_{\mathcal{F}_h}\big) \\
			&\lesssim  \|\bm{\sigma}_h\|_{\bm{\Sigma}_h}^2.
		\end{split}
	\end{align}
	It follows from \eqref{eq:DefBh}, \Cref{Plc},   the Cauthy-Schwarz's inequality, and the Young's inequality that
	\begin{align}\label{B4}
		\mathcal{B}^+_h(\bm{\sigma}_h,\bm{\tau}^4_h)=& {({k}^2+1)\big((\nabla{\Pi_{\ell+1}^{\rm c}} p_h, \nabla_h p_h)-\langle \bm{n}\cdot\nabla{\Pi_{\ell+1}^{\rm c}} p_h, p_h-\widehat{p} _h\rangle_{\partial\mathcal{T}_h}\big)}
		+{k}^2(\bm u_h,\nabla{\Pi_{\ell+1}^{\rm c}} p_h) \nonumber\\
		=&({k}^2+1)\big(( \nabla_h p_h, \nabla_h p_h)+( \nabla{\Pi_{\ell+1}^{\rm c}} p_h- \nabla_hp_h,\nabla_h p_h)\big)\nonumber\\
		&\quad+  ({k}^2+1)\langle   \bm{n}\cdot\nabla{\Pi_{\ell+1}^{\rm c}} p_h,\widehat{p}_h -p_h \rangle_{\partial\mathcal{T}_h}
		+{k}^2(\bm u_h,\nabla{\Pi_{\ell+1}^{\rm c}} p_h)
		\\
		\ge& \frac{{k}^2+1}{2}\|\nabla_h p_h\|^2_0-C_4({k}^2+1)  \| \h^{-1/2}(p_h-\widehat{p}_h)\|^2_{0,\partial\mathcal{T}_h}
		-C_5{k}^2\|\bm u_h\|^2_0.\nonumber
	\end{align}
	\vspace{0.05in}

	\textbf{Step five:}
	
	Let $\bm{\tau}_h=\alpha\bm{\tau}_1+\bm{\tau}_2+\bm{\tau}_3+\bm{\tau}_4$ with $\alpha=\max(C_1,C_2,C_5,C_3+C_4)+\frac 1 2$. By \eqref{tau1}, \eqref{tau2}, \eqref{tau3} and \eqref{tau4}, we can get
	\begin{align*}
		\|\bm{\tau}_h\|_{\bm{\Sigma}_h}\lesssim  \|\bm{\sigma}_h\|_{\bm{\Sigma}_h}.
	\end{align*}
	Moreover, by combining \eqref{B1}, \eqref{B2}, \eqref{B3} and \eqref{B4},  {and \eqref{norm-lambda}}, we have
	\begin{align*}
		\mathcal{B}^+_h(\bm{\sigma}_h,\bm{\tau}_h) \ge\frac{1}{2}\|\bm{\sigma}_h\|^2_{\bm{\Sigma}_h}.
	\end{align*}
	The last two inequalities together lead to
	\begin{align*}
		\mathcal{B}^+_h(\bm{\sigma}_h,\bm{\tau}_h)\gtrsim \|\bm{\sigma}_h\|_{\bm{\Sigma}_h}\|\bm{\tau}_h\|_{\bm{\Sigma}_h},
	\end{align*}
	which implies \eqref{inf-sup1}. This completes the proof of \Cref{Th45}.
\end{proof}
%

%%%%%%%%%%%%%%%%%%%%%%%%%%%%%%%
\subsection{Error estimates of the elliptic projection}
\begin{theorem}\label{EPerr} Assume that  $(\bm{r} =\bm\nabla\times\bm u,\bm{u},p)\in [H^t(\Omega)]^3\times \big( [H^t(\Omega)]^3\cap\bm{H}_0({\rm curl};\Omega)\big) \times \big(H^{t+1}(\Omega)\cap H_0^1(\Omega)\big)$ with $t\in (1/2,\ell]$ and that $\nabla\cdot\bm{u}=0$. Let $\bm{\sigma}:=\big(\bm r,\bm u,\bm u|_{\mathcal{F}_h}, p,p|_{\mathcal{F}_h}\big)$ and let $ \bm{\MP}_h\bm{\sigma}=\big(\bm r_h^P,\bm u_h^P,\widehat{\bm u}_h^P, p_h^P,\widehat{p}_h^P\big)$ be its elliptic projection defined in \eqref{EP}. Then
	\begin{subequations}
		\begin{align}
			\|\bm{\sigma}- \bm{\MP}_h\bm{\sigma}\|_{\bm{\Sigma}_h}\lesssim&  h^{t}\big( (1+kh){\|\bm{r}\|_t}+({k}+1){\|\bm{u}\|_t}+({k}+1){\|p\|_{t+1}}\big).\label{EP1}
		\end{align}
		Moreover, for the regularity constant $s\in (1/2,1]$ from \Cref{reg}, there holds
		\begin{align}
			\begin{split}
				\|\bm u-\bm u_h^P\|_0\lesssim& (1+kh) h^{t+s}\big( (1+kh){\|\bm{r}\|_t}+({k}+1){\|\bm{u}\|_t}+({k}+1){\|p\|_{t+1}}\big) \label{EP2}
				\\
				& + \|\bm u-\bm{\mathcal{P}}_{\ell}^{\rm curl}\bm u\|_0 .
			\end{split}
		\end{align}
	\end{subequations}
\end{theorem}
\begin{proof}
	From the inf-sup condition of \Cref{Th45} and the definition of the elliptic projection \eqref{EP}, we have
	\eqn{\|\bm{\mathcal I}_h\bm{\sigma}- \bm{\MP}_h\bm{\sigma}\|_{\bm{\Sigma}_h}\lesssim &
		\sup_{\bm 0\neq \bm{\tau}_h\in\bm{\Sigma}_h}\frac{\mathcal{B}^+_h(\bm{\mathcal I}_h\bm{\sigma}- \bm{\MP}_h\bm{\sigma},\bm{\tau}_h)}{\|\bm{\tau}_h\|_{\bm{\Sigma}_h}}
		=\sup_{\bm 0\neq \bm{\tau}_h\in\bm{\Sigma}_h}\frac{\mathcal{B}^+_h(\bm{\mathcal I}_h\bm{\sigma}-\bm{\sigma},\bm{\tau}_h)}{\|\bm{\tau}_h\|_{\bm{\Sigma}_h}}
	}
	which implies \eqref{EP1}  by using   \Cref{lemma47,l53},  and the triangle inequality.  It remains to prove \eqref{EP2}. Denote by $\bm\eta:=\bm u-\bm u_h^P$. We have the following decomposition:
	\begin{align}\label{et1}
		\begin{split}
			\|\bm\eta\|_0^2=&(\bm u-\bm{\mathcal{P}}_{\ell}^{\rm curl}\bm u,\bm\eta)+(\bm{\Pi}_{\ell}^{\rm curl,c}\bm u_h^P-\bm u_h^P,\bm\eta)+(\bm w_h-\bm\Theta,\bm\eta)+(\bm\Theta,\bm\eta),
		\end{split}
	\end{align}
	where
	$\bm w_h:=\bm{\mathcal{P}}_{\ell}^{\rm curl}\bm u-\bm{\Pi}_{\ell}^{\rm curl,c}\bm u_h^P$ and
	$\bm\Theta$ is defined in \Cref{Hip}. Let $\bm w_h$ be decomposed as \eqref{DHD} in \Cref{Hip}. For any $q_h\in H_0^1(\Omega)\cap M_h$, by taking $\bm\tau_h=(\bm{0},\nabla q_h,\bm{n}\times\nabla q_h\times\bm{n},0,0)$ in \eqref{EP} and using the definition \eqref{eq:DefBh} of $\mathcal{B}^+_h$, we conclude that $(\bm u_h^P,\nabla q_h)=0$, that is, $\bm u_h^P$ is discrete divergence free.  Noting that $\nabla\cdot\bm u=0$, we have
	\eqn{(\bm w_h-\bm\Theta,\bm\eta)=(\bm{z}_h+\nabla \xi_h-\bm\Theta,\bm\eta)=(\bm{z}_h-\bm\Theta,\bm\eta).}
	It follows from \Cref{pic,Hip}, the triangle inequality, and \eqref{norm-lambda} that
	\begin{align}\label{et2}
		\begin{split}
			&(\bm{\Pi}_{\ell}^{\rm curl,c}\bm u_h^P-\bm u_h^P,\bm\eta)+(\bm w_h-\bm\Theta,\bm\eta)\\
			&\qquad\lesssim \big(\|\h^{1/2}\bm n\times[\![\bm u_h^P]\!]\|_{0,\mathcal{F}_h}+h^{s}\|\bm\nabla\times \bm w_h\|_0\big)\|\bm\eta\|_0\\
			&\qquad\lesssim\big(\|\h^{s-1/2}\bm n\times[\![\bm u_h^P]\!]\|_{0,\mathcal{F}_h}+h^{s}\| \bm\nabla_h\times  (\bm{\mathcal{P}}_{\ell}^{\rm curl}\bm u-\bm u_h^P)\|_0\big)\|\bm\eta\|_0\\
			&\qquad\lesssim h^{s}\big(\|\bm{\sigma}- \bm{\MP}_h\bm{\sigma}\|_{\bm{\Sigma}_h}+\|\bm\nabla\times (\bm{\mathcal{P}}_{\ell}^{\rm curl}\bm u-\bm u)\|_0\big)\|\bm\eta\|_0.
		\end{split}
	\end{align}
	To estimate the last term in \eqref{et1}, we introduce the following dual problem: Find $(\bm r^d,\bm u^d ,p^d)$ such that
	\begin{subequations}\label{eq:dualE}
		\begin{align}
			\bm{r}^d-\bm\nabla\times\bm{u}^d&=0&\text{ in }\Omega, \label{dualE}\\
			\bm\nabla\times \bm{r}^d+{k}^2\bm u^d+({k}^2+1)\nabla p^d&=\bm\Theta&\text{ in }\Omega,  \\
			\nabla\cdot\bm{u}^d&=0& \text{ in }\Omega,  \\
			\bm{n}\times\bm{u}^d &=\bm{0}& \text{ on }\Gamma,\\
			p^d&=0& \text{ on }\Gamma.
		\end{align}
	\end{subequations}
	Note that the above dual problem is positive definite since the sign in front of  $k^2$ in the second equation is positive (cf. \eqref{mix0source}).
	Similar to \Cref{reg}, we have the following regularity estimate of problem \eqref{eq:dualE}:
	\begin{align*}
		\|\bm{r}^d\|_{s}+(k+1)\|\bm{u}^d\|_{s} \lesssim  \|\bm\Theta\|_0, \qquad  p^d=0,
	\end{align*}
	where the regularity index $s\in (1/2,1]$ depends on $\Omega$. Denote by $\bm{\sigma}^d =(\bm{r}^d, \bm{u}^d, \bm{u}^d|_{\mathcal{F}_h}, $ $p^d, p^d|_{\mathcal{F}_h})$. Following from a similar derivation as in \Cref{S:HDG},  we conclude that $\bm{\sigma}^d$ satisfies the following variational  equation
	\eqn{
		\mathcal{B}_h^+(\bm{\sigma}^d,\bm{\tau})=-(\bm\Theta,\bm{v})\qquad \forall \bm{\tau}=(\bm{s}, \bm{v}, \widehat{\bm{v}}, q, \widehat{q})\in\bm{\Sigma}.
	}
	By taking $\bm\tau=\bm{\sigma}- \bm{\MP}_h\bm{\sigma}$ and using \eqref{EP}, the symmetry of $\mathcal{B}_h^+$, and \Cref{lemma47}, we have
	\begin{align}\label{et3}
		\begin{split}
			(\bm\Theta,\bm\eta)=&\mathcal{B}_h^+(\bm{\sigma}^d, \bm{\MP}_h\bm{\sigma}-\bm{\sigma})=\mathcal{B}_h^+(\bm{\MP}_h\bm{\sigma}-\bm{\sigma},\bm{\sigma}^d)=\mathcal{B}_h^+(\bm{\sigma}^d-\bm{\mathcal I}_h\bm{\sigma}^d, \bm{\MP}_h\bm{\sigma}-\bm{\sigma})\\
			\lesssim& h^s\big((1+kh){\|\bm{r}^d\|_s}+({k}+1){\|\bm{u}^d\|_s}\big) \|\bm{\sigma}- \bm{\MP}_h\bm{\sigma}\|_{\bm{\Sigma}_h}\\
			\lesssim&(1+kh)h^s\|\bm\Theta\|_0 \|\bm{\sigma}- \bm{\MP}_h\bm{\sigma}\|_{\bm{\Sigma}_h}.
		\end{split}
	\end{align}
	On the other hand, from \Cref{Hip,pic}, $ \bm w_h=\bm{\mathcal{P}}_{\ell}^{\rm curl}\bm u-\bm u+\bm \eta+\bm u_h^P-\bm{\Pi}_{\ell}^{\rm curl,c}\bm u_h^P$,  and \eqref{norm-lambda}, we have
	\begin{align}\label{et4}
		\begin{split}
			\|\bm\Theta\|_0\le& \|\bm\Theta-\bm z_h\|_0+\|\bm z_h\|_0
			\lesssim  h^s\|\bm\nabla\times \bm w_h\|_0+\|\bm w_h\|_0\\
			\lesssim & h^s\big(\|\bm\nabla\times (\bm u-\bm{\mathcal{P}}_{\ell}^{\rm curl}\bm u)\|_0+\|\bm{\sigma}- \bm{\MP}_h\bm{\sigma}\|_{\bm{\Sigma}_h}\big)+\|\bm u-\bm{\mathcal{P}}_{\ell}^{\rm curl}\bm u\|_0+\|\bm\eta\|_0.
		\end{split}
	\end{align}
	By combining \eqref{et1},\eqref{et2},\eqref{et3}, and \eqref{et4}, we obtain
	\eqn{\|\bm\eta\|_0\lesssim\|\bm u-\bm{\mathcal{P}}_{\ell}^{\rm curl}\bm u\|_0+(1+kh)h^{s}\big(\|\bm{\sigma}- \bm{\MP}_h\bm{\sigma}\|_{\bm{\Sigma}_h}+\|\bm\nabla\times (\bm u-\bm{\mathcal{P}}_{\ell}^{\rm curl}\bm u)\|_0\big), }
	which together with \eqref{EP1} and \eqref{curl-est2} implies \eqref{EP2}.
\end{proof}

	\section{Error estimates of the HDG methods}\label{section5}  In this section, we derive error estimates for the HDG method \eqref{oror} (or \eqref{Bh_HDG}) by using a modified duality argument, which was first used to derive the preasymptotic error estimates for the continuous interior penalty finite element method \cite{zhu2013preasymptotic}.
	
	We first show that the error of the HDG solution $\bm\sigma_h$ in the norm $\|\cdot\|_{\bm{\Sigma}_h}$ can be bounded above by the interpolation error and the $L^2$ error $\|\bm{u}-\bm{u}_h\|_0$.

	\begin{lemma}\label{lemma53} Let $(\bm r,\bm u,p)$ and $\bm{\sigma}_h=(\bm{r}_h ,\bm{u}_h ,\widehat{\bm{u}}_h ,p_h ,\widehat{p}_h )\in \bm{\Sigma}_h$ be the solutions of \eqref{mixed} and \eqref{Bh_HDG}, respectively. Then we have the following estimate
		\begin{align}
			\|\bm{\sigma}-\bm{\sigma}_h\|_{\bm{\Sigma}_h}\lesssim  h^{t} {\big(
				(1+kh){\|\bm r\|_t}+
				({k}+1){\|\bm{u}\|_t}+({k}+1){\|p\|_{t+1}}\big)}+{k}\|\bm{u}-\bm{u}_h\|_0,
			\label{error1}
		\end{align}
		where  $\bm{\sigma}=\big(\bm r,\bm u,\bm u|_{\mathcal{F}_h}, p,p|_{\mathcal{F}_h}\big)$.
	\end{lemma}
	\begin{proof}
		It follows from the discrete inf-sup condition in \Cref{Th45}, \eqref{eq:DefBh}, the orthogonality in \Cref{lem:Orth},  and \eqref{norm-lambda}  that
		\begin{align*}
			\|\bm{\sigma}_h-\bm{\mathcal{I}}_h\bm{\sigma}\|_{\bm{\Sigma}_h}
			&\lesssim  \sup_{\bm 0\neq \bm{\tau}_h\in\bm{\Sigma}_h}\frac{\mathcal B^+_h(\bm{\sigma}_h-\bm{\mathcal{I}}_h\bm{\sigma},\bm{\tau}_h)}{\|\bm{\tau}_h\| _{\bm{\Sigma}_h}} \\
			&=\sup_{\bm 0\neq \bm{\tau}_h\in\bm{\Sigma}_h}\frac{\mathcal{B}_h^{-}(\bm{\sigma}_h-\bm{\mathcal{I}}_h\bm{\sigma},\bm{\tau}_h)-2{k}^2(\bm{u}_h-\bm{\mathcal{P}}^{\rm curl}_{\ell}\bm{u},\bm{v}_h)}{\|\bm{\tau}_h\|_{\bm{\Sigma}_h}}\\
			&= \sup_{\bm 0\neq \bm{\tau}_h\in\bm{\Sigma}_h}\frac{\mathcal{B}_h^{-}(\bm{\sigma}-\bm{\mathcal{I}}_h\bm{\sigma},\bm{\tau}_h)-2{k}^2(\bm{u}_h-\bm{\mathcal{P}}^{\rm curl}_{\ell}\bm{u},\bm{v}_h)}{\|\bm{\tau}_h\|_{\bm{\Sigma}_h}}\\
			&= \sup_{\bm 0\neq \bm{\tau}_h\in\bm{\Sigma}_h}\frac{\mathcal{B}_h^{+}(\bm{\sigma}-\bm{\mathcal{I}}_h\bm{\sigma},\bm{\tau}_h)+2{k}^2(\bm{u}-\bm{u}_h,\bm{v}_h)}{\|\bm{\tau}_h\|_{\bm{\Sigma}_h}}\\
			&\lesssim \sup_{\bm 0\neq \bm{\tau}_h\in\bm{\Sigma}_h}\frac{\mathcal{B}_h^{+}(\bm{\sigma}-\bm{\mathcal{I}}_h\bm{\sigma},\bm{\tau}_h)}{\|\bm{\tau}_h\|_{\bm{\Sigma}_h}}+k\|\bm{u}-\bm{u}_h\|_0,
		\end{align*}
		which together with \Cref{lemma47,l53} and the triangle inequality completes the proof of the lemma.
	\end{proof}

	Finally, we give the error estimates of the proposed HDG method in the following theorem.
	
	\begin{theorem}\label{thm:L^2} Let  {$(\bm{r}, \bm{u},p)$} and $\bm{\sigma}_h=(\bm{r}_h ,\bm{u}_h ,\widehat{\bm{u}}_h ,p_h ,\widehat{p}_h )\in \bm{\Sigma}_h$ be the solutions of \eqref{source} and \eqref{Bh_HDG}, respectively.
		Suppose $(\bm{r},\bm{u},p)\in [H^t(\Omega)]^{3}\times [H^t(\Omega)]^3 \times H^{t+1}(\Omega)$ with $t\in (1/2,\ell]$. Let $s\in (1/2,1]$ and $s^*\in (1,2]$ be the regularity indexes from \Cref{reg}.
		
		{\rm (i)}  There exists a constant $C_0>0$ independent of $k$ and $h$ such that, if $ \mathcal  M_k kh^{s}\le C_0$,  we have
		\begin{subequations}
			\begin{align}
				\|\bm{r}-\bm{r}_h\|_0&\lesssim  h^{t}\big(({k}+1){\|\bm{u}\|_t}+{\|\bm{r}\|_t}+({k}+1){\|p\|_{t+1}}\big),\label{eq:rhEst}\\
				\|\bm{u}-\bm{u}_h\|_0&\lesssim    h^{t}{\|\bm{u}\|_t}+{\mathcal M_k}h^{t+s}\big({\|\bm{u}\|_t}+{\|\bm{r}\|_t}+({k}+1){\|p\|_{t+1}}\big),\label{eq:uhEst}\\
				\|\nabla p-\nabla_h p_h\|_0&\lesssim   h^{t}\big({\|\bm{u}\|_t}+(k+1)^{-1}{\|\bm{r}\|_t}+{\|p\|_{t+1}}\big).\label{eq:phEst}
			\end{align}	
		\end{subequations}
		
		{\rm (ii)} If  in addition $\bm u\in [H^{t+s}(\Omega)]^3$, it holds {  that}
		\begin{align}\label{eq:uhEst2}
			\|\bm{u}-\bm{u}_h\|_0&\lesssim    h^{t+s}\|\bm{u}\|_{t+s}+ {\mathcal M_k}   h^{t+s}\big(({k}+1){\|\bm{u}\|_t}+{\|\bm{r}\|_t}+({k}+1){\|p\|_{t+1}}\big).
		\end{align}
		
		{\rm (iii)} If $\Omega$ is convex and in addition $\bm u\in [H^{t+1}(\Omega)]^3$, there exists a constant $C_0>0$ independent of $k$ and $h$ such that if $kh^s+\mathcal  M_k k^2h^{s^*}\le C_0$,  we have
		\begin{subequations}
			\begin{align}
				\|\bm{r}-\bm{r}_h\|_0&\lesssim  (h^{t}+{\mathcal M_k}k h^{t+1})\big(({k}+1){\|\bm{u}\|_t}+{\|\bm{r}\|_t}+({k}+1){\|p\|_{t+1}}\big),\label{eq:rhEst2}\\
				\|\bm{u}-\bm{u}_h\|_0&\lesssim    h^{t+1}\|\bm{u}\|_{t+1}+{\mathcal M_k}h^{t+1}\big((k+1){\|\bm{u}\|_t}+{\|\bm{r}\|_t}+({k}+1){\|p\|_{t+1}}\big),\label{eq:uhEst3}\\
				\|\nabla p-\nabla_h p_h\|_0&\lesssim    (h^{t}+{\mathcal M_k}kh^{t+1})\big({\|\bm{u}\|_t}+(k+1)^{-1}{\|\bm{r}\|_t}+{\|p\|_{t+1}}\big).\label{eq:phEst2}
			\end{align}	
		\end{subequations}	%
	\end{theorem}
	\begin{proof}  It suffices to prove the $L^2$ error estimates for $\bm{u}_h$ since the other estimates are direct consequences of them and  \Cref{lemma53}. For simplicity, {  we} denote by $ \bm e :=\bm{u}_h-\bm{u}$ and $C_{u,p}:=({k}+1){\|\bm{u}\|_t}+{\|\bm\nabla\times\bm{u}\|_t}+({k}+1){\|p\|_{t+1}}.$ Without loss of generality, we assume that $kh\lesssim 1$. \eqref{error1} implies that the following estimate holds.
		\begin{align}
			\|\bm{\sigma}-\bm{\sigma}_h\|_{\bm{\Sigma}_h}\lesssim &  h^{t}C_{u,p}+{k}\| \bm e \|_0.
			\label{error2}
		\end{align}
		Similar to the proof of \eqref{EP2},  we decompose the square of the $L^2$ error as
		\begin{align}\label{S0}
			\| \bm e \|_0^2=(\bm{\mathcal P}_{\ell}^{\rm curl}\bm u-\bm u, \bm e )+(\bm{u}_h-\bm{\Pi}_{\ell}^{\rm curl,c}\bm u_h, \bm e )
			+(\bm w_h-\bm \Theta, \bm e )+(\bm \Theta, \bm e ),
		\end{align}
		where $\bm w_h:=\bm{\Pi}_{\ell}^{\rm curl,c}\bm u_h-\bm{\mathcal{P}}^{\rm curl}_{\ell}\bm u$ and the $\bm\Theta$ is defined as in \Cref{Hip}. Let $\bm w_h$ be decomposed as \eqref{DHD} in \Cref{Hip}. For any $q_h\in H_0^1(\Omega)\cap M_h$, from \eqref{ororc}, we conclude that $(\bm{u}_h,\nabla q_h)=0$.  Noting that $ \nabla\cdot\bm u=0$, we have
		\eqn{(\bm w_h-\bm\Theta, \bm e )=(\bm{z}_h+\nabla \xi_h-\bm\Theta, \bm e )=(\bm{z}_h-\bm\Theta, \bm e ).}
		Similar to \eqref{et2}, we have
		\begin{align}\label{S1}
			(\bm{u}_h-\bm{\Pi}_{\ell}^{\rm curl,c}\bm u_h, \bm e )+(\bm w_h-\bm \Theta, \bm e )\
			&\lesssim h^{s}\big(\|\bm{\sigma}-\bm{\sigma}_h\|_{\bm{\Sigma}_h}+\|\bm\nabla\times (\bm{\mathcal{P}}_{\ell}^{\rm curl}\bm u-\bm u)\|_0\big)\| \bm e \|_0\notag\\
			&\lesssim h^{s}\big(h^{t}C_{u,p}+{k}\| \bm e \|_0\big)\| \bm e \|_0,
		\end{align}
		where we have used \eqref{error2} and \Cref{lemma4.2} to derive the last inequality.
		
		We introduce the following duality problem:  Find $(\bm r^d,\bm u^d ,p^d)$ such that
		\begin{subequations}\label{eq:duality}
			\begin{align}
				\bm{r}^d-\bm\nabla\times\bm{u}^d&=0&\text{ in }\Omega, \label{dual}\\
				\bm\nabla\times \bm{r}^d-{k}^2\bm u^d+({k}^2+1)\nabla p^d&=\bm\Theta&\text{ in }\Omega,  \\
				\nabla\cdot\bm{u}^d&=0& \text{ in }\Omega,  \\
				\bm{n}\times\bm{u}^d &=\bm{0}& \text{ on }\Gamma,\\
				p^d&=0& \text{ on }\Gamma.
			\end{align}
		\end{subequations}
		By \Cref{reg}, we have the following regularity estimate of problem \eqref{eq:duality}:
		\begin{align} \label{436}
			\|\bm{r}^d\|_{s}+({k}+1)\|\bm{u}^d\|_{s} \lesssim   {\mathcal M_k} \|\bm\Theta\|_0,
		\end{align}
		and
		\eq{\label{regud}
			\|\bm u^d\|_{s^*}\lesssim {\mathcal M_k} \|\bm{\bm\Theta}\|_0,\quad\text{if $\Omega$ is convex}.
		}
		By combining \Cref{lemma4.2} and \eqref{436}--\eqref{regud}, we have
		\eq{\label{S2}
			\|\bm u^d-\bm{\mathcal{P}}_{\ell}^{\rm curl}\bm u^d\|_0
			\lesssim \mathcal M_kD_{k,h}\|\bm\Theta\|_0 ,}
		where
		\eq{\label{Dkh}
			D_{k,h}:=\begin{cases} h^{s^*} &\text{if $\Omega$ is convex,}\\
				(k+1)^{-1}h^s &\text{otherwise.}
			\end{cases}
		}
		It is easy to check that $\bm{\sigma}^d=(\bm{r}^d, \bm{u}^d, \bm{u}^d|_{\mathcal{F}_h}, $ $p^d, p^d|_{\mathcal{F}_h})$ satisfies the following variational formulation
		\begin{eqnarray}
			\mathcal{B}_h^{-}(\bm{\sigma}^d,\bm{\tau})=-(\bm\Theta,\bm{v})\qquad \forall\,  \bm{\tau}\in\bm{\Sigma}.\label{441}
		\end{eqnarray}
		Let $\bm{u}_h^{dP}$ denote the second component of $ \bm{\MP}_h\bm\sigma^d$. Then by \Cref{lemma47,l53},  the regularity estimate \eqref{436},  the orthogonality in \Cref{lem:Orth}, \Cref{EPerr}, \eqref{S2}, and the fact that $p^d=0$, we have

		\begin{align}
			\begin{split}\label{S3}
				(\bm\Theta, \bm e )=&\mathcal{B}_h^{-}(\bm{\sigma}^d,\bm{\sigma}-\bm{\sigma}_h)=\mathcal{B}_h^{-}(\bm{\sigma}-\bm{\sigma}_h,\bm{\sigma}^d)=\mathcal{B}_h^{-}(\bm{\sigma}-\bm{\sigma}_h,\bm{\sigma}^d- \bm{\MP}_h\bm{\sigma}^d) \\
				=&\mathcal{B}_h^{-}(\bm{\sigma}^d- \bm{\MP}_h\bm{\sigma}^d,\bm{\sigma}-\bm{\sigma}_h)\\
				=&\mathcal{B}_h^{+}(\bm{\sigma}^d- \bm{\MP}_h\bm{\sigma}^d,\bm{\sigma}-\bm{\sigma}_h)+2k^2(\bm{u}^d-\bm{u}_h^{dP},\bm{u}-\bm{u}_h) \\
				\lesssim&\mathcal{B}_h^{+}(\bm{\sigma}^d- \bm{\MP}_h\bm{\sigma}^d,\bm{\sigma}-\bm{\mathcal{I}}_h\bm{\sigma})+k^2\|\bm{u}^d-\bm{u}_h^{dP}\|_0\| \bm e \|_0 \\
				\lesssim&   h^{s}(\|\bm{r}^d\|_s+({k}+1)\|\bm{u}^d\|_s)   \|\bm{\sigma}-\bm{\mathcal{I}}_h\bm{\sigma}\|_{\bm{\Sigma}_h} \\
				&+k^2\big(\|\bm u^d-\bm{\mathcal{P}}_{\ell}^{\rm curl}\bm u^d\|_0+h^{2s}(\|\bm{r}^d\|_s+({k}+1)\|\bm{u}^d\|_s)\big)\| \bm e \|_0
				\\
				\lesssim&  \mathcal M_k  \|\bm\Theta\|_0\big( h^{s}  \|\bm{\sigma}-\bm{\mathcal{I}}_h\bm{\sigma}\|_{\bm{\Sigma}_h}+k^2(D_{k,h}+h^{2s})\| \bm e \|_0\big) \\
				\lesssim&  \mathcal M_k  \|\bm\Theta\|_0\big( h^{s+t} C_{u,p}+k^2(D_{k,h}+h^{2s})\| \bm e \|_0\big),
			\end{split}
		\end{align}
		On the other hand, from \Cref{Hip,pic,lemma4.2}, $ \bm w_h=\bm{\Pi}_{\ell}^{\rm curl,c}\bm u_h-\bm u_h+\bm \xi+\bm u-\bm{\mathcal{P}}_{\ell}^{\rm curl}\bm u$, \eqref{error2}, and the triangle inequality, we have
		\begin{align}\label{theta-theta}
			\begin{split}
				\|\bm\Theta\|_0\le&  {\|\bm\Theta-\bm z_h\|_0+\|\bm z_h\|_0
					\lesssim  h^s\|\bm\nabla\times \bm w_h\|_0+\|\bm w_h\|_0 } \\
				\lesssim& h^{s}\big(\| \bm\nabla_h\times  (\bm{\Pi}_{\ell}^{\rm curl,c}\bm u_h-\bm u_h)\|_0+\|\bm\nabla\times \bm \xi\|_0
				+\|\bm\nabla\times (\bm u-\bm{\mathcal{P}}^{\rm curl}_{\ell}\bm u)\|_0\big)\\
				&+\|\bm{\Pi}_{\ell}^{\rm curl,c}\bm u_h-\bm u_h\|_0
				+\|\bm \xi\|_0+\|\bm{\mathcal P}_{\ell}^{\rm curl}\bm u-\bm u\|_0 \\
				\lesssim&   {h^{s}\big( \|\bm\nabla\times(\bm{\mathcal P}_{\ell}^{\rm curl}\bm u-\bm u)\|_0+\|\bm{\sigma}_h-\bm\sigma\|_{\bm\Sigma_h}     \big)
					+\|\bm{\mathcal P}_{\ell}^{\rm curl}\bm u-\bm u\|_0+\| \bm e \|_0}\\
				\lesssim&   {h^{s+t}C_{u,p}
					+\|\bm{\mathcal P}_{\ell}^{\rm curl}\bm u-\bm u\|_0+(1+kh^s)\| \bm e \|_0}.
			\end{split}
		\end{align}
		Combining \eqref{S0}, \eqref{S1},  {\eqref{S3}, \eqref{theta-theta},}  and the Young's inequality gives
		\begin{align*}
			\| \bm e \|_0\lesssim &
			\big(1+\mathcal M_k(k^2D_{k,h}+k^2h^{2s})\big)\|\bm{\mathcal P}_{\ell}^{\rm curl}\bm u-\bm u\|_0\\
			&+\mathcal M_k h^{s+t}(1+kh^s+k^2D_{k,h}+k^2h^{2s})C_{u,p}\\
			&+\big(kh^s+\mathcal M_k(1+kh^s)(k^2D_{k,h}+k^2h^{2s})\big) \| \bm e \|_0.
		\end{align*}
		Therefore, if $kh^s+\mathcal M_k k^2D_{k,h}$ is sufficiently small, it holds
		\begin{align}\label{uuh}
			\|\bm{u}-\bm{u}_h\|_0&\lesssim \|\bm{\mathcal P}_{\ell}^{\rm curl}\bm u-\bm u\|_0+\mathcal M_k h^{s+t}C_{u,p},
		\end{align}
		which implies \eqref{eq:uhEst}, \eqref{eq:uhEst2}, and \eqref{eq:uhEst3}.
		This completes the proof of the theorem.
	\end{proof}
	
	As a consequence of the above theorem, we have the following well-posedness of the proposed HDG method.
	\begin{corollary} Under the conditions of \Cref{thm:L^2}, the HDG scheme \eqref{Bh_HDG} has a unique solution $\bm{\sigma}_h\in\bm{\Sigma}_h $.
	\end{corollary}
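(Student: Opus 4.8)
The plan is to reduce the corollary to an injectivity (uniqueness) statement and then invoke the error estimates of \Cref{thm:L^2}. Since in \eqref{Bh_HDG} the trial function $\bm{\sigma}_h$ and the test function $\bm{\tau}_h$ both range over the same finite-dimensional space $\bm{\Sigma}_h$, the scheme is a square linear system, so it is uniquely solvable for every right-hand side precisely when its homogeneous counterpart has only the trivial solution. Hence it suffices to show: if $\bm{\sigma}_h=(\bm r_h,\bm u_h,\widehat{\bm u}_h,p_h,\widehat p_h)\in\bm{\Sigma}_h$ satisfies $\mathcal{B}_h^-(\bm{\sigma}_h,\bm{\tau}_h)=0$ for all $\bm{\tau}_h\in\bm{\Sigma}_h$, then $\bm{\sigma}_h=\bm 0$. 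This $\bm{\sigma}_h$ is exactly the HDG solution \eqref{Bh_HDG} associated with the data $\bm f=\bm 0$.

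First I would identify the exact solution for $\bm f=\bm 0$. Under the hypotheses of \Cref{thm:L^2} the smallness condition (e.g. $\mathcal M_k k h^{s}\le C_0$, or $\mathcal M_k k^2 h^{s^*}\le C_0$ when $\Omega$ is convex) forces $\mathcal M_k<\infty$, and by \eqref{Mk} this excludes $k^2=\lambda_i$ for every $i$; thus \eqref{source} is uniquely solvable (cf. \Cref{LeigKk}, \eqref{op} and \eqref{eq:regest}), and for $\bm f=\bm 0$ its solution is $(\bm u,p)=(\bm 0,0)$, so $\bm r=\nabla\times\bm u=\bm 0$ too. The triple $(\bm 0,\bm 0,0)$ trivially belongs to $[H^t(\Omega)]^{3}\times[H^t(\Omega)]^3\times H^{t+1}(\Omega)$, so \Cref{thm:L^2} applies with $|\bm u|_t=|\bm r|_t=|p|_{t+1}=0$; in particular the right-hand side of \eqref{eq:uhEst} vanishes, giving $\|\bm u_h-\bm u\|_0=0$, i.e. $\bm u_h=\bm 0$.

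Next I would feed $\bm u=\bm r=\bm 0$, $p=0$ and $\bm u_h=\bm 0$ into \Cref{lemma53}: its bound then reads $\|\bm{\sigma}_h\|_{\bm{\Sigma}_h}\lesssim 0$, so $\|\bm{\sigma}_h\|_{\bm{\Sigma}_h}=0$. It remains to read off the individual terms of $\|\cdot\|_{\bm{\Sigma}_h}$ in \eqref{norm-lambda}: the term $\|\bm r_h\|_0^2$ gives $\bm r_h=\bm 0$; the term $\|\h^{-1/2}\bm n\times(\bm u_h-\widehat{\bm u}_h)\|_{0,\partial\mathcal{T}_h}^2$ with $\bm u_h=\bm 0$ gives $\bm n\times\widehat{\bm u}_h=\bm 0$ on every face, which together with the constraint $\widehat{\bm u}_h\cdot\bm n|_{\mathcal{F}_h}=0$ in the definition of $\widehat{\bm U}_h$ forces $\widehat{\bm u}_h=\bm 0$; finally $\|\nabla_h p_h\|_0=0$ makes $p_h$ piecewise constant, $\|\h^{-1/2}(p_h-\widehat p_h)\|_{0,\partial\mathcal{T}_h}=0$ makes $p_h$ take equal values on the two sides of every interior face, so by connectedness of the mesh $p_h$ is a single constant with $\widehat p_h=p_h$ on all faces, and the constraint $\widehat p_h|_{\Gamma}=0$ forces that constant to be $0$. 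Hence $\bm{\sigma}_h=\bm 0$, which proves the corollary.

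I do not anticipate a genuine obstacle, since the analytic content is already carried by \Cref{thm:L^2} and \Cref{lemma53}; the only step requiring a little care is the last one, extracting the hybrid unknowns $\widehat{\bm u}_h,\widehat p_h$ and the scalar $p_h$ itself from $\|\bm{\sigma}_h\|_{\bm{\Sigma}_h}=0$, which uses the defining constraints of $\widehat{\bm U}_h$, $\widehat M_h$ and the connectedness of $\Omega$. A slightly longer alternative would avoid \Cref{lemma53} by obtaining $\bm r_h=\bm 0$, $\bm u_h=\bm 0$ and $\nabla_h p_h=\bm 0$ directly from \eqref{eq:rhEst}--\eqref{eq:phEst} and then returning to the HDG equations \eqref{oror} (for instance \eqref{orora} and \eqref{orore}) to pin down $\widehat{\bm u}_h$, $p_h$ and $\widehat p_h$; I would present the route via \Cref{lemma53} as the cleaner one.
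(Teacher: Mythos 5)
Your proposal is correct and follows exactly the route the paper intends: the paper offers no written proof, merely asserting the corollary "as a consequence of the above theorem," and your argument (square linear system, so uniqueness suffices; apply Theorem~\ref{thm:L^2} and Lemma~\ref{lemma53} with $\bm f=\bm 0$ to force $\|\bm\sigma_h\|_{\bm\Sigma_h}=0$; then read off the hybrid unknowns from the norm and the constraints in $\widehat{\bm U}_h$, $\widehat M_h$) is the standard filling-in of that assertion. The careful extraction of $\widehat{\bm u}_h$, $p_h$, $\widehat p_h$ at the end is a welcome addition beyond what the paper records.
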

	
	In view of \Cref{reg}, \eqref{uuh},  and \Cref{lemma53,lemma4.2}, we can obtain the following error estimates for the linear HDG method  as it is applied on a convex domain.
	\begin{corollary}\label{cor0}
		Suppose $\Omega$ is convex, $\ell=1$,  and  $\nabla\cdot\bm f=0$. Then there exists a constant $C_0>0$ independent of $k$ and $h$ such that if $ kh^s+\mathcal  M_k k^2h^{s^*}\le C_0$,  the following error estimates hold:
		\begin{align*}
			\|\bm{r}_h-\bm{r}\|_0&\lesssim  ({\mathcal M_k}h+{\mathcal M_k} k h^{s^*}+{\mathcal M_k^2}kh^2)  \|\bm{f}\|_0,\\
			\|\bm{u}_h-\bm{u}\|_0&\lesssim ({\mathcal M_k}  h^{s^*}+{\mathcal M}_k^2h^2) \|\bm{f}\|_0.
		\end{align*}
	\end{corollary}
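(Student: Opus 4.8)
The plan is to specialize the general error bounds of \Cref{thm:L^2} to the present setting and feed in the wavenumber-explicit regularity of \Cref{reg}. Since $\Omega$ is convex, the regularity index in \Cref{embed}/\Cref{reg} may be taken as $s=1$; since $\ell=1$ forces $t\in(1/2,1]$, I would take $t=1$, which is admissible because $\bm u\in[H^1(\Omega)]^3$, $\bm r=\nabla\times\bm u\in[H^1(\Omega)]^3$ and $p\in H^2(\Omega)$ on a convex domain. First I would observe that $\nabla\cdot\bm f=0$ makes the Lagrange multiplier vanish: testing \eqref{s-2} with $q=p\in H^1_0(\Omega)$ and integrating by parts gives $(\bm f,\nabla p)=-(\nabla\cdot\bm f,p)=0$, whence $\|\nabla p\|_0=0$ and $p\equiv0$, so $|p|_{t+1}=0$. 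Then \Cref{reg} provides $(k+1)|\bm u|_1+|\nabla\times\bm u|_1\le C\mathcal M_k\|\bm f\|_0$, i.e. $C_{u,p}\le C\mathcal M_k\|\bm f\|_0$ in the notation of the proof of \Cref{thm:L^2}; moreover, since in addition $\bm f\in\bm H(\mathrm{div}^0;\Omega)$ and $\Omega$ is convex, \eqref{rc} gives $\bm u\in[H^{s^*}(\Omega)]^3$ with $\|\bm u\|_{s^*}\le C\mathcal M_k\|\bm f\|_0$ for some $s^*\in(1,2]$. I would also record that, because $\mathcal M_k\ge1$ and $s^*\le2$, the hypothesis $\mathcal M_k k^2h^{s^*}\le C_0$ implies $kh\lesssim1$ and $\mathcal M_k k^2h^2\le C_0$, which are the forms needed below.

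For the $L^2$-estimate of $\bm u_h$ I would invoke part (iii) of \Cref{thm:L^2} (equivalently, the bound $\|\bm u_h-\bm u\|_0\lesssim\|\bm u-\bm{\mathcal{P}}^{\rm curl}_{\ell}\bm u\|_0+\mathcal M_k h^{s+t}C_{u,p}$ established in its proof under the smallness $\mathcal M_k k^2D_{k,h}\le C_0$), noting that on a convex domain $D_{k,h}=h^{s^*}$ by \eqref{Dkh}, so this smallness is exactly the stated mesh condition. The extra $H^{s^*}$-regularity of $\bm u$ upgrades the N\'ed\'elec interpolation error: by \eqref{curl-est0} with $t^*=s^*\le\ell+1=2$ we get $\|\bm u-\bm{\mathcal{P}}^{\rm curl}_{1}\bm u\|_0\le Ch^{s^*}|\bm u|_{s^*}\le C\mathcal M_k h^{s^*}\|\bm f\|_0$. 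Combining this with $s=t=1$ and $C_{u,p}\le C\mathcal M_k\|\bm f\|_0$ yields
\[
\|\bm u_h-\bm u\|_0\le C\big(\mathcal M_k h^{s^*}+\mathcal M_k^2 h^2\big)\|\bm f\|_0,
\]
which is the second asserted estimate.

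Finally, for $\bm r_h$ I would use that $\|\bm r_h-\bm r\|_0\le\|\bm\sigma_h-\bm\sigma\|_{\bm\Sigma_h}$ directly from the definition \eqref{norm-lambda} of the norm, then apply \Cref{lemma53} with $t=1$, $p=0$ and $kh\lesssim1$ to obtain $\|\bm\sigma_h-\bm\sigma\|_{\bm\Sigma_h}\lesssim h\mathcal M_k\|\bm f\|_0+k\|\bm u_h-\bm u\|_0$. Inserting the $\bm u_h$-bound just derived gives $\|\bm r_h-\bm r\|_0\le C(\mathcal M_k h+\mathcal M_k k h^{s^*}+\mathcal M_k^2 k h^2)\|\bm f\|_0$, the first asserted estimate. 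Almost all of this is routine bookkeeping of constants; the only genuinely delicate points are (a) checking that the abstract smallness hypothesis of \Cref{thm:L^2}~(iii), which is stated in terms of $\mathcal M_k k^2D_{k,h}$, reduces precisely to $\mathcal M_k k^2h^{s^*}\le C_0$ on a convex domain, and (b) making sure that the higher regularity index $s^*$ coming from \eqref{rc} is the same exponent that appears in both the mesh condition and the interpolation bound \eqref{curl-est0}, so that the rate $h^{s^*}$ is genuinely attained rather than being degraded to $h^{t}=h$.
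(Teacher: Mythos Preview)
Your proposal is correct and follows exactly the route the paper intends: the corollary is stated as an immediate consequence of \Cref{reg} and \Cref{thm:L^2}, and you have supplied the bookkeeping faithfully, including the key observation that one must return to the master bound $\|\bm u_h-\bm u\|_0\lesssim\|\bm u-\bm{\mathcal P}^{\rm curl}_\ell\bm u\|_0+\mathcal M_k h^{s+t}C_{u,p}$ from the proof of \Cref{thm:L^2} (rather than \eqref{eq:uhEst3} alone) so that the N\'ed\'elec interpolation error can be upgraded via \eqref{curl-est0} and \eqref{rc} to order $h^{s^*}$. Your derivation of the $\bm r_h$ bound through \Cref{lemma53} is likewise the intended one and reproduces the stated estimate term by term.
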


	%%%%%%%%%%%%%%%%%%%%%%%%%%%%%%%%%%%%%%%%%%%%%%%%%%%%%%%%%%%%%%%
	\section{Numerical experiments}\label{section6} The numerical tests are programmed   using the C++ library named MFEM \cite{MR4189802}. 
	%When the HDG scheme is implemented, all interior unknowns $\bm{r}_h$, ${\bm{u}_h}$ and ${p_h}$ are eliminated, and the only global unknowns in the resulting system are $\widehat{\bm{u}}_h$ and $\widehat{p}_h$. After solving the global system, we recover the $\bm{r}_h$, ${\bm{u}_h}$ and ${p_h}$  locally inside each element. 
	 The solver of GMRES  with block AMG as preconditioner is chosen to solve the linear systems. 
	%Let $\mathcal{T}_h$ be a uniform simplex decomposition of $\Omega$.
	%, we denote by $h$ the smallest length of the edge in $\mathcal{T}_h$.
	%
	Let $\Omega=[0,1]^3$. We take the following exact solution $\bm u$ and $p$, and compute the functions $\bm{r}$ and $\bm{f}$ accordingly.
	{
	
		\begin{align*}
			\bm u&=[\sin(\pi y)\sin(\pi z),\sin(\pi z)\sin(\pi x),\sin(\pi x)\sin(\pi y)]^T,\\
			\bm r&=[
			\pi\sin(\pi x)(\cos(\pi y)-\cos(\pi z)),
			\pi\sin(\pi y)(\cos(\pi z)-\cos(\pi x)), \\
			&\quad
			\pi\sin(\pi z)(\cos(\pi x)-\cos(\pi y))]^T,\\
			p&=2\pi\sin(2\pi x)\sin(2\pi y)\sin(2\pi z).
			%(\bm\nabla\times)^2\bm u&= 2\pi^2 \bm u.                             .
		\end{align*}

	}
	%
	%
	%
	%\begin{align*}
	%	u_1&=200(x-x^2)^2(2y^3-3y^2+y)(2z^3-3z^2+z),\\
	%	u_2&=-100(y-y^2)^2(2z^3-3z^2+z)(2x^3-3x^2+x),\\
	%	u_2&=-100(z-z^2)^2(2x^3-3x^2+x)(2y^3-3y^2+y),\\
	%	p&=	100x(x-1)y(y-1)z(z-1)/(k^2+1).
	%\end{align*}
	
	The numerical results for $\ell=m =1$ and $\ell=m =2$ are all presented to illustrate the performance of the proposed HDG method. We take $k=0,1,2,4$ and report the errors in \Cref{com1,com2}, respectively. It can be observed that: the convergence orders of $\|\bm r_h-\bm r\|_0$, $\|\bm u_h-\bm u\|_0$ and $\|\nabla_h(p_h-p)\|_0$ can obtain the convergence orders of almost $\ell$, $\ell+1$ and $\ell$ as $h\to 0$, which confirms our theoretical analysis.
	
	{
		
		%%%%%%%%%%%%%%%%%%%%%%%%%%%%%%%%%%%%%%%%%%%%%%%%%%%%
		\begin{table}[!h]	
			\scriptsize \label{com1}
			\caption{\label{tab1}History of convergence for $\ell=m=1$}
			\centering
			
			\begin{tabular}{c|c|c|c|c|c|c|c|c}
				\Xhline{1pt}
				\multirow{2}{*}{$k$} &
				
				\multirow{2}{*}{$\frac{\sqrt{3}}{h}$} &
				
				\multicolumn{2}{c|}{${\|\bm{r}_h-\bm{r}\|_0/\|\bm r\|_0}$} &
				\multicolumn{2}{c|}{${\|\bm{u}_h-\bm{u}\|_0/\|\bm u\|_0}$} &
				
				\multicolumn{2}{c| }{${\|\nabla_h( p_h- p)\|_0/\|\nabla p\|_0}$} &
				\multirow{2}{*}{DOF}\\
				
				\cline{3-8}
				
				&  &Error &Rate  &Error &Rate  &Error &Rate  \\
				\hline
				
				\multirow{5}{*}{$0$}
				&	2	&	1.2990E-01	&		&	1.7590E-01	&		&	6.4780E-01	&		&	3072	\\
				&	4	&	3.7800E-02	&	1.78 	&	4.3740E-02	&	2.01 	&	3.0820E-01	&	1.07 	&	23424	\\
				&	8	&	1.0980E-02	&	1.78 	&	1.1870E-02	&	1.88 	&	1.0900E-01	&	1.50 	&	182784	\\
				&	16	&	3.7210E-03	&	1.56 	&	3.2110E-03	&	1.89 	&	3.2620E-02	&	1.74 	&	1443840	\\
				&	32	&	1.5590E-03	&	1.26 	&	8.2780E-04	&	1.96 	&	8.6930E-03	&	1.91 	&	11476992	\\

				\cline{1-9}
				\multirow{4}{*}{$1$}
				&	2	&	1.3380E-01	&		&	1.7020E-01	&		&	6.5900E-01	&		&	3072	\\
				&	4	&	3.9670E-02	&	1.75 	&	3.9520E-02	&	2.11 	&	3.3500E-01	&	0.98 	&	23424	\\
				&	8	&	1.1630E-02	&	1.77 	&	1.0800E-02	&	1.87 	&	1.4180E-01	&	1.24 	&	182784	\\
				&	16	&	3.8400E-03	&	1.60 	&	3.0830E-03	&	1.81 	&	4.8660E-02	&	1.54 	&	1443840	\\
				&	32	&	1.5750E-03	&	1.29 	&	8.1790E-04	&	1.92 	&	1.3760E-02	&	1.82 	&	11476992	\\

				\cline{1-9}
				\multirow{4}{*}{$2$}
				
				&	2	&	1.4020E-01	&		&	1.7620E-01	&		&	6.6850E-01	&		&	3072	\\
				&	4	&	4.2840E-02	&	1.71 	&	3.7500E-02	&	2.23 	&	3.6720E-01	&	0.86 	&	23424	\\
				&	8	&	1.3140E-02	&	1.70 	&	9.4480E-03	&	1.99 	&	2.0510E-01	&	0.84 	&	182784	\\
				&	16	&	4.2060E-03	&	1.64 	&	2.8070E-03	&	1.75 	&	9.1510E-02	&	1.17 	&	1443840	\\
				&	32	&	1.6270E-03	&	1.37 	&	7.9130E-04	&	1.83 	&	2.9730E-02	&	1.62 	&	11476992	\\

				\cline{1-9}
				\multirow{4}{*}{$4$}
				&	2	&	1.6100E-01	&		&	2.3530E-01	&		&	6.8000E-01	&		&	3072	\\
				&	4	&	5.8490E-02	&	1.46 	&	5.0720E-02	&	2.21 	&	3.8160E-01	&	0.83 	&	23424	\\
				&	8	&	1.8370E-02	&	1.67 	&	1.2960E-02	&	1.97 	&	2.7520E-01	&	0.47 	&	182784	\\
				&	16	&	5.6220E-03	&	1.71 	&	3.0170E-03	&	2.10 	&	1.8230E-01	&	0.59 	&	1443840	\\
				&	32	&	1.8690E-03	&	1.59 	&	7.8750E-04	&	1.94 	&	8.0350E-02	&	1.18 	&	11476992	\\

				\Xhline{1pt}
			\end{tabular}
			
		\end{table}
		%%%%%%%%%%%%%%%%%%%%%%%%%%%%{\scriptsize }%%%%%%%%%%%%%%%%%%%%%%%%
		%%%%%%%%%%%%%%%%%%%%%%%%%%%%%%%%%%%%%%%%%%%%%%%%%%%%
		\begin{table}[!h]	
			\scriptsize \label{com2}
			\caption{\label{tab2}History of convergence for $\ell=m=2$}
			\centering
			
			\begin{tabular}{c|c|c|c|c|c|c|c|c}
				\Xhline{1pt}
				\multirow{2}{*}{$k$} &
				
				\multirow{2}{*}{$\frac{\sqrt{3}}{h}$} &
				
				\multicolumn{2}{c|}{${\|\bm{r}_h-\bm{r}\|_0/\|\bm r\|_0}$} &
				\multicolumn{2}{c|}{${\|\bm{u}_h-\bm{u}\|_0/\|\bm u\|_0}$} &
				
				\multicolumn{2}{c| }{${\|\nabla_h( p_h- p)\|_0/\|\nabla p\|_0}$} &
				\multirow{2}{*}{DOF}\\
				
				\cline{3-8}
				
				&  &Error &Rate  &Error &Rate  &Error &Rate  \\
				\hline
				
				\multirow{5}{*}{$0$}
				&	2	&	2.9080E-02	&		&	4.4280E-02	&		&	3.6540E-01	&		&	6480	\\
				&	4	&	4.3280E-03	&	2.75 	&	5.1880E-03	&	3.09 	&	7.6440E-02	&	2.26 	&	49728	\\
				&	8	&	6.3910E-04	&	2.76 	&	6.1540E-04	&	3.08 	&	1.1580E-02	&	2.72 	&	389376	\\
				&	16	&	1.1660E-04	&	2.46 	&	7.7450E-05	&	2.99 	&	1.5160E-03	&	2.93 	&	3081216	\\
				&	32	&	2.5900E-05	&	2.17 	&	9.7300E-06	&	2.99 	&	1.9110E-04	&	2.99 	&	24514560	\\
				
				\cline{1-9}
				
				\multirow{5}{*}{$1$}
				&	2	&	2.9600E-02	&		&	3.9680E-02	&		&	3.6470E-01	&		&	6480	\\
				&	4	&	4.4200E-03	&	2.74 	&	4.6010E-03	&	3.11 	&	7.6970E-02	&	2.24 	&	49728	\\
				&	8	&	6.5480E-04	&	2.76 	&	5.8080E-04	&	2.99 	&	1.2400E-02	&	2.63 	&	389376	\\
				&	16	&	1.1780E-04	&	2.48 	&	7.6030E-05	&	2.93 	&	1.7140E-03	&	2.86 	&	3081216	\\
				&	32	&	2.5980E-05	&	2.18 	&	9.6820E-06	&	2.97 	&	2.2040E-04	&	2.96 	&	24514560	\\

				\cline{1-9}
				
				\multirow{5}{*}{$2$}
				
				&	2	&	3.0650E-02	&		&	3.8060E-02	&		&	3.6690E-01	&		&	6480	\\
				&	4	&	4.6780E-03	&	2.71 	&	4.1060E-03	&	3.21 	&	8.0130E-02	&	2.20 	&	49728	\\
				&	8	&	7.0510E-04	&	2.73 	&	5.2080E-04	&	2.98 	&	1.5520E-02	&	2.37 	&	389376	\\
				&	16	&	1.2200E-04	&	2.53 	&	7.2450E-05	&	2.85 	&	2.5710E-03	&	2.59 	&	3081216	\\
				&	32	&	2.6230E-05	&	2.22 	&	9.5440E-06	&	2.92 	&	3.5810E-04	&	2.84 	&	24514560	\\

				\cline{1-9}
				
				\multirow{5}{*}{$4$}
				&	2	&	3.2980E-02	&		&	4.2250E-02	&		&	3.7000E-01	&		&	6480	\\
				&	4	&	5.1700E-03	&	2.67 	&	4.1080E-03	&	3.36 	&	8.4460E-02	&	2.13 	&	49728	\\
				&	8	&	8.3680E-04	&	2.63 	&	4.6580E-04	&	3.14 	&	2.1970E-02	&	1.94 	&	389376	\\
				&	16	&	1.3940E-04	&	2.59 	&	6.4140E-05	&	2.86 	&	5.4500E-03	&	2.01 	&	3081216	\\
				&	32	&	2.7370E-05	&	2.35 	&	9.0780E-06	&	2.82 	&	9.5950E-04	&	2.51 	&	24514560	\\

				\Xhline{1pt}
			\end{tabular}
			
		\end{table}
	}
	%%%%%%%%%%%%%%%%%%%%%%%%%%%%{\scriptsize }%%%%%%%%%%%%%%%%%%%%%%%%

\end{document}